\pgfplotsset{compat=1.18}
\definecolor{c1}{cmyk}{0.3, 0.1, 0.0, 0.2}
\newtheorem{theorem}{Theorem}[section]
\newtheorem{lemma}[theorem]{Lemma}
\journal{}
\begin{document}

\begin{frontmatter}



\title{Hopf bifurcation in a memory-based diffusion competition model with spatial heterogeneity} 
\tnotetext[mytitlenote]{This research is supported by the National Natural Science Foundation of China (No. 12271525).}

\author{Shu Li\fnref{label1}}
\ead{shli97@csu.edu.cn}

\author{Binxiang Dai\fnref{label1}\corref{mycorrespondingauthor}}
\cortext[mycorrespondingauthor]{Corresponding author}
\ead{bxdai@csu.edu.cn}

\address[label1]{School of Mathematics and Statistics, HNP-LAMA, Central South University, Changsha, Hunan, 410083, PR China}

\begin{abstract}
In this paper, we investigate a Lotka-Volterra competition-diffusion system with self-memory effects and spatial heterogeneity under Dirichlet boundary conditions. We focus on how memory strength influences the coexistence and stability of competing species. By analyzing the characteristic equation, we establish the existence and stability of a spatially nonhomogeneous positive steady state and demonstrate the occurrence of Hopf bifurcation as memory delay increases. Our results reveal that both weak and some opposing memory effects of two competing species promote stable coexistence, while strong memory may destabilize the system and lead to periodic oscillations. Spatial heterogeneity further enriches the dynamical behaviors. Numerical simulations are presented to confirm the theoretical results.
\end{abstract}

\begin{keyword}
Self-memory; Hopf bifurcation; Diffusive competition model; Stability; Spatial heterogeneity.

\end{keyword}

\end{frontmatter}

\section{Introduction}
Understanding the spatial dynamics of interacting biological species is a fundamental topic in mathematical ecology. Reaction-diffusion systems have long been employed to model the spread and interaction of populations in heterogeneous environments, capturing essential features such as diffusion-driven instabilities, pattern formation, and coexistence mechanisms \cite{BH1996, W1996}. In recent years, increasing attention has been paid to the role of cognition, including memory and perception, in shaping animal movement and spatial distribution \cite{FL2013, WS2023}. In particular, Shi \cite{SW2020} incorporated a modified Fickian diffusion into the reaction-diffusion model to describe episodic-like memory and derived the following model:
\begin{equation}\label{i1}
	\begin{cases}
		u_t=d_1\triangle u(x,t)+d_2 div (u(x,t)\nabla u(x,t-\tau))+f(u(x,t)), & x\in\Omega, t>0, \\
		\partial_{\vec{n}} u=0, & x\in\partial\Omega, t>0, 
	\end{cases}
\end{equation}
where delay $\tau>0$ is the average memory period; $d_1>0$ and $d_2\in\mathbb R$ represent the random and memory-based diffusion coefficients, respectively. When $d_2>0$, memory induces animals to exhibit repulsive movements away from past locations, while when $d_2<0$,  it causes them to display attractive movements towards these past locations. Building upon model \eqref{i1}, many researchers have investigated how different types of memory formulations, boundary conditions, and reaction terms affect the dynamical behavior of the system. For instance, under Neumann boundary conditions, various memory effects have been studied in \cite{SW2019, SWW2019, WF2022}, while similar analyses under Dirichlet boundary conditions can be found in \cite{AW2020}. Regarding two-species interaction systems, most existing works focus on predator-prey models, exploring the influence of memory-based diffusion \cite{LL2022, LJ2024, LY2024, LW2023, SP2021} and therein. Wang et al. \cite{WW2023} examined a competition model with discrete memory under Neumann boundary conditions as follows:
\begin{equation} \label{i2}
	\begin{cases}
		u_t=d_1\triangle u+d_{11} \nabla\cdot(u\nabla u_{\tau})+d_{12} \nabla\cdot(u\nabla v_{\tau})+u(m(x)-u-bv), & x\in\Omega,t>0,  \\
		v_t=d_2\triangle v+d_{21} \nabla\cdot(v\nabla u_{\tau})+d_{22} \nabla\cdot(v\nabla v_{\tau})+v(m(x)-cu-v), & x\in\Omega,t>0,  \\
		\partial_{\nu}u=\partial_{\nu}v=0 , & x\in\partial\Omega,t>0,  \\
		u(x,t)=u_0(x,t), v(x,t)=v_0(x,t), & x\in\Omega,t\in (-r,0].
	\end{cases}
\end{equation}
Their results indicate that, under Neumann boundary conditions, the interplay between identical resource functions and memory-based diffusion, including self-memory and cross-memory, can stabilize nonconstant positive steady states in the weak competition condition. This highlights the crucial role of two types of memory effects in driving and maintaining spatially heterogeneous coexistence patterns. However, our previous studies have shown that self-memory may help a species gain an advantage in competition, while cross-memory can increase the risk of extinction for competing species. Therefore, it is of particular interest to investigate how different intensities of self-memory in two competing species affect their competitive outcomes. In this paper, we consider the following model:
\begin{equation}\label{1}
	\begin{cases}
		u_t(x,t)=\triangle u(x,t)+d_1 \nabla\cdot(u(x,t)\nabla u(x,t-\tau))+ \lambda_1 u(x,t)(r_1(x)-a_{11}u(x,t)-a_{12}v(x,t)), & x\in\Omega,t>0, \\
		v_t(x,t)=\triangle v(x,t)+d_2 \nabla\cdot(v(x,t)\nabla v(x,t-\tau))+ \lambda_2 v(x,t)(r_2(x)-a_{21}u(x,t)-a_{22}v(x,t)), & x\in\Omega,t>0,  \\
		u(x,t)=v(x,t)=0, & x\in\partial\Omega,t>0,  \\
	\end{cases}
\end{equation}
where \( u = u(x,t) \) and \( v = v(x,t) \) denote the population densities of two competing species at position \( x \) and time \( t \), respectively. The parameters \( d_1 \) and \( d_2 \) represent the ratios of memory-based diffusion to random diffusion for species \( u \) and \( v \), respectively. The coefficients \( a_{12} \) and \( a_{21} \) describe the interspecific competition intensities related to species \( u \) and \( v \), while \( a_{11} \) and \( a_{22} \) denote the intraspecific self-limitation rates. We make the following assumptions on the resource functions \( r_1(x) \) and \( r_2(x) \) of species \( u \) and \( v \), respectively:
\begin{enumerate}
	\item[$(\bf{H_0})$] $r_i(x)\in C^{\alpha}(\bar \Omega)$ $(\alpha\in(0, 1))$, $\max\limits_{x\in\bar\Omega} r_i(x)>0$, $i=1,2$. 
\end{enumerate}

In the following analysis, we investigate the stability and existence of the Hopf bifurcation to model \eqref{1}. Our main results can be summarized as follows:
\begin{itemize}
	\item There is a positive steady state $(u_s, v_s)$ of model \eqref{1} (see Theorem \ref{thm2.1});
	\item If $(d_1, d_2)\in D_2$, then the positive steady-state solution $(u_s, v_s)$ is locally asymptotically stable when $\tau\ge 0$ (see Theorem \ref{thm9});
	\item Under some additional conditions, there exists a sequence of values $\{\tau_n\}_{n=0}^{\infty}$, such that the positive steady state $(u_s, v_s)$ is locally asymptotically stable when $\tau\in[0, \tau_0)$, and unstable when $\tau\in(\tau_0, \infty)$; Moreover, a Hopf bifurcation occurs at $(u_s, v_s)$ when $\tau=\tau_n$, $n=0, 1, 2, \dots$ (see Theorem \ref{thm7}, \ref{thm9}, and \ref{th3.9}).
\end{itemize}

In model \eqref{1}, when \( d_1 = d_2 = 0 \), \eqref{1} reduces to the classical reaction-diffusion competition model in a spatially heterogeneous environment, which has been extensively studied. For instance, Hastings \cite{has} explored a two-species competition model in a spatially heterogeneous yet temporally constant environment, demonstrating that a species with a slower diffusion rate has a competitive advantage when introduced at low density. Similar conclusions were reached by Dockery et al. \cite{doc}, who emphasized the role of diffusion rate in invasion success. He and Ni \cite{HN, HN2, HN3} considered the competition model with spatial heterogeneity under the Neumann boundary conditions; they provided a complete classification of the global dynamics of the system. Their results indicate that the system either admits a globally asymptotically stable semi-trivial steady state, or has a unique globally asymptotically stable coexistence steady state, or enters a degenerate case, where a compact global attractor exists and consists of a continuum of steady states connecting two semi-trivial steady states. 

The rest of our paper is organized as follows. In Section 2, the existence and the explicit expression of the nonconstant steady state solution of \eqref{1} is given. In Section 3, by analyzing the corresponding eigenvalue problem, the stability and the existence of Hopf bifurcation of \eqref{1} are obtained. In section 4, numerical simulations are performed to verify the theoretical results.

Throughout this paper, denote $X=H^2(\Omega)\cap H_0^1(\Omega)$ and $Y=L^2(\Omega)$, with $H_0^1(\Omega)=\{u\in H^1(\Omega)|u(x)=0, \forall x\in\partial\Omega\}$. For any subspace $Z$, we define the Complex valued space of $Z$ is $Z_{\mathbb C}:=Z\bigoplus iZ=\{x_1+ix_2|x_1, x_2\in Z\}$. Denote $C((-\infty,0],Y)$ is a Banach space that maps continuously from $(-\infty,0]$ to $Y$. For a Complex-valued Hilbert space $Y_{\mathbb C}$, the inner product is defined by $\langle u,v\rangle=\int_\Omega \Bar{u}(x)v(x)\mathrm{d}x$.

\section{Existence of the positive steady state solutions}

In this section, we investigate the existence of non-constant steady-state solutions of \eqref{1}, which are solutions of the following corresponding elliptic system:
\begin{equation}\label{2.2}
\begin{cases}
\triangle u(x)+d_1 \nabla\cdot(u(x)\nabla u(x)+ \lambda_1 u(x)(r_1(x)-a_{11}u(x)-a_{12}v(x))=0, & x\in\Omega, \\
\triangle v(x)+d_2 \nabla\cdot(v(x)\nabla v(x)+ \lambda_2 v(x)(r_2(x)-a_{21}u(x)-a_{22}v(x))=0, & x\in\Omega, \\
u(x)=v(x)=0, & x\in\partial\Omega.  \\
\end{cases}
\end{equation}
Define a nonlinear operator $F: X^2\times \mathbb R^2\to Y^2$ by
\[
F(U,\lambda)=\left(\begin{array}{cc}
	\triangle u+d_1\nabla\cdot(u\nabla u)+\lambda_1 u(r_1(x)-a_{11}u-a_{12}v)\\
	\triangle v+d_2\nabla\cdot(v\nabla v)+\lambda_2 v(r_2(x)-a_{21}u-a_{22}v)
\end{array}\right),
\]
with $U=(u,v)^T\in X^2$ and $\lambda=(\lambda_1, \lambda_2)^T\in\mathbb R^2$. For any fixed $\lambda$, $U_0=(0,0)$ is always a trivial steady state solution of \eqref{2.2}. The $Frech\Acute{e} t$ derivative of $F(U,\lambda)$  with respect to $U$ evaluated at $(U_0,\lambda_*)$ is
\[
\mathcal{L}_{\lambda}:=D_U F(0,\lambda_*)=\left(\begin{array}{cc}
	\triangle+\lambda_1r_1(x) & 0\\
	0 & \triangle+\lambda_2r_2(x)
\end{array}\right),
\]
let $\lambda_*=(\lambda_1^*, \lambda_2^*)\in\mathbb R^2$, where $\lambda_1^*$, $\lambda_2^*$ are the principle eigenvalues of the following eigenvalue problems:
\begin{equation}\label{2.3}
	\begin{cases}
		\triangle \phi+\lambda_1 r_1(x)\phi=0, & x\in\Omega, \\
		\phi(x)=0, & x\in\partial\Omega, 
	\end{cases}
\end{equation}
and 
\begin{equation}\label{2.4}
	\begin{cases}
		\triangle \varphi+\lambda_2 r_2(x)\varphi=0, & x\in\Omega, \\
		\varphi(x)=0, & x\in\partial\Omega, 
	\end{cases}
\end{equation}
respectively. By the variational method, $\lambda_1^*$ and $\lambda_2^*$ can be represented as the following form:
\begin{equation}\label{3.5}
	\lambda_1^*=\inf\limits_{\phi\in H_1(\Omega), \int_{\Omega}r_1(x)\phi^2\mathrm{d}x>0} \dfrac{\int_{\Omega}|\nabla\phi |^2\mathrm{d}x}{\int_{\Omega}r_1(x)\phi^2\mathrm{d}x}, \quad 
	\lambda_2^*=\inf\limits_{\varphi\in H_1(\Omega), \int_{\Omega}r_2(x)\varphi^2\mathrm{d}x>0} \dfrac{\int_{\Omega}|\nabla\varphi |^2\mathrm{d}x}{\int_{\Omega}r_2(x)\varphi^2\mathrm{d}x},
\end{equation}
with the corresponding principal eigenfunctions $\phi_*>0$ and $\psi_*>0$, respectively. Without loss of generality, we assume that $\|\phi_*\|_{Y_{\mathbb C}}^2=1$ and $\|\psi_*\|_{Y_{\mathbb C}}^2=1$. Then let 
\[
\mathcal N(\mathcal L_{\lambda_*})=\mathrm{span}\{\Phi_*, \Psi_*\}, 
\]
where $\Phi_*=(\phi_*, 0)^T$, $\Psi_*=(0, \psi_*)^T$. To find the steady-state solution of \eqref{2.2}, we decompose the spaces as follows:
\[
X^2=\mathcal{N}(\mathcal L_{\lambda_*})\oplus X_1^2, \quad
Y^2=\mathcal{N}(\mathcal L_{\lambda_*})\oplus Y_1^2,
\]
where
\[
X_1^2=\{y\in X^2: \langle\Phi_*, y\rangle=\langle\Psi_*, y\rangle =0\}, 
\]
and
\[
Y_1^2=\mathcal{R}(\mathcal L_{\lambda_*})=\{y\in Y^2: \langle\Phi_*, y\rangle=\langle\Psi_*, y\rangle =0\}.
\]

For simplicity, we denote
\begin{align}\label{kappa}
	& \kappa_1=\cos{\omega}\int_{\Omega}\phi_*\nabla\cdot(\phi_*\nabla\phi_*)\mathrm{d}x, \quad
	\kappa_2=\sin{\omega}\int_{\Omega}\psi_*\nabla\cdot(\psi_*\nabla\psi_*)\mathrm{d}x, \nonumber\\
	& \kappa_3=\int_{\Omega}\cos{\omega}\phi_*^3\mathrm{d}x>0, \quad
	\kappa_4=\int_{\Omega}\sin{\omega}\phi_*^2\psi_*\mathrm{d}x>0, \quad
	\kappa_5=\int_{\Omega}\cos{\omega}\phi_*\psi_*^2\mathrm{d}x>0, \nonumber\\
	&\kappa_6=\int_{\Omega}\sin{\omega}\psi_*^3\mathrm{d}x>0, \quad
	 \kappa_7=\int_{\Omega}\cos{\omega}\phi_*^2\psi_*\mathrm{d}x>0, \quad
	\kappa_8=\int_{\Omega}\sin{\omega}\phi_*\psi_*^2\mathrm{d}x>0.
\end{align}
Then, we obtain the following theorem on the existence of positive steady-state solutions of \eqref{1}.
\begin{theorem}\label{thm2.1}
	Asuume that $(\bf{H_0})$ holds, there exists a constant $\delta>0$ and a continuously differentiable map $s\mapsto(\lambda_1(s), \lambda_2(s), (w_1(s), w_2(s)))$ from $[0, \delta]$ to $\mathbb R^2 \times X_1$, such that for any $s\in[0, \delta]$ and $\omega\in(0, \frac{\pi}{2})$, \eqref{1} has a positive steady-state solution as the following form:
\begin{equation}\label{2.6} 
\begin{cases}
	u_s=s[\cos{\omega}\phi_*+w_1(s)], \\
	v_s=s[\sin{\omega}\psi_*+w_2(s)].
\end{cases}  
\end{equation}
Moreover, $(\lambda_1(s), \lambda_2(s), (w_1(s), w_2(s)))$ satisfies 
\begin{equation}\label{2.7}
	\begin{cases}
		\lambda_1(s)=\lambda_1^*+\lambda_1'(0)s+o(s), \\
		\lambda_2(s)=\lambda_2^*+\lambda_2'(0)s+o(s), \\
		(w_1(0), w_2(0))^T=(0, 0)^T,
	\end{cases}
\end{equation}
where 
\begin{equation}\label{2.8}
	\begin{cases}
	\lambda_1'(0)=\dfrac{\lambda_1^*(a_{11}\kappa_3+a_{12}\kappa_4)-d_1\kappa_1}{\int_{\Omega}r_1(x)\phi_*^2\mathrm{d}x}, \\
	\lambda_2'(0)=\dfrac{\lambda_2^*(a_{21}\kappa_5+a_{22}\kappa_6)-d_2\kappa_2}{\int_{\Omega}r_2(x)\psi_*^2\mathrm{d}x}. 
	\end{cases}
\end{equation}
\end{theorem}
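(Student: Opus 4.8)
The plan is to realise $(u_s,v_s)$ as a branch of solutions of the elliptic system \eqref{2.2} bifurcating from the trivial state $U_0=(0,0)$ at $\lambda=\lambda_*$, by applying the implicit function theorem to a rescaled equation. Insert the ansatz \eqref{2.6}, i.e. $u=s\tilde u$, $v=s\tilde v$ with $\tilde u=\cos\omega\,\phi_*+w_1$, $\tilde v=\sin\omega\,\psi_*+w_2$ and $(w_1,w_2)\in X_1^2$, into \eqref{2.2} and divide out the common factor $s$ (which removes the trivial branch). This produces
\[
G(s,w_1,w_2,\lambda_1,\lambda_2)=\begin{pmatrix}\triangle\tilde u+d_1 s\,\nabla\!\cdot\!(\tilde u\nabla\tilde u)+\lambda_1\tilde u(r_1-a_{11}s\tilde u-a_{12}s\tilde v)\\ \triangle\tilde v+d_2 s\,\nabla\!\cdot\!(\tilde v\nabla\tilde v)+\lambda_2\tilde v(r_2-a_{21}s\tilde u-a_{22}s\tilde v)\end{pmatrix},
\]
which, because $X=H^2(\Omega)\cap H_0^1(\Omega)\hookrightarrow L^\infty(\Omega)$ makes the memory terms land in $Y$, is a smooth map $G:\mathbb R\times X_1^2\times\mathbb R^2\to Y^2$; and $(u,v)=(s\tilde u,s\tilde v)$ solves \eqref{2.2} iff $G=0$. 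From $\triangle\phi_*=-\lambda_1^*r_1\phi_*$ and $\triangle\psi_*=-\lambda_2^*r_2\psi_*$ one reads off $G(0,0,0,\lambda_1^*,\lambda_2^*)=0$, so the desired branch will emanate from this point and automatically have the form \eqref{2.6}--\eqref{2.7}.

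The central step is to prove that $T:=D_{(w_1,w_2,\lambda_1,\lambda_2)}G(0,0,0,\lambda_*)$ is an isomorphism from $X_1^2\times\mathbb R^2$ onto $Y^2$. Differentiating $G$ at $s=0$, $w=0$, $\lambda=\lambda_*$ (where every $O(s)$ term drops out) gives
\[
T(\eta_1,\eta_2,\mu_1,\mu_2)=\mathcal L_{\lambda_*}(\eta_1,\eta_2)^T+\mu_1\,(r_1\cos\omega\,\phi_*,\,0)^T+\mu_2\,(0,\,r_2\sin\omega\,\psi_*)^T .
\]
Since $\lambda_i^*$ is the simple, isolated principal eigenvalue of \eqref{2.3}--\eqref{2.4}, the self-adjoint elliptic operator $\mathcal L_{\lambda_*}$ restricts to an isomorphism $X_1^2\to Y_1^2=\mathcal R(\mathcal L_{\lambda_*})$ by the Fredholm alternative, so it suffices to check that the two ``parameter directions'' have linearly independent projections onto $\mathcal N(\mathcal L_{\lambda_*})=\mathrm{span}\{\Phi_*,\Psi_*\}$. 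The pairing matrix is diagonal:
\[
\langle\Phi_*,(r_1\cos\omega\,\phi_*,0)^T\rangle=\cos\omega\!\int_\Omega r_1\phi_*^2\,\mathrm dx>0,\qquad \langle\Psi_*,(0,r_2\sin\omega\,\psi_*)^T\rangle=\sin\omega\!\int_\Omega r_2\psi_*^2\,\mathrm dx>0,
\]
with $\langle\Psi_*,(r_1\cos\omega\,\phi_*,0)^T\rangle=\langle\Phi_*,(0,r_2\sin\omega\,\psi_*)^T\rangle=0$; here $\int_\Omega r_1\phi_*^2>0$ and $\int_\Omega r_2\psi_*^2>0$ by the variational formula \eqref{3.5} (both $\lambda_i^*>0$), and $\cos\omega,\sin\omega>0$ precisely because $\omega\in(0,\tfrac\pi2)$. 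Hence $T$ is injective (projecting $T(\eta,\mu)=0$ onto $\mathcal N(\mathcal L_{\lambda_*})$ forces $\mu_1=\mu_2=0$, whence $\mathcal L_{\lambda_*}(\eta_1,\eta_2)^T=0$ with $(\eta_1,\eta_2)\in X_1^2$ forces $\eta_1=\eta_2=0$) and, being a compact (rank-two) perturbation of an index-$0$ Fredholm operator, surjective. The implicit function theorem then yields $\delta>0$ and a $C^1$ map $s\mapsto(\lambda_1(s),\lambda_2(s),w_1(s),w_2(s))$ on $[0,\delta]$ with $G\equiv0$ along it and $(w_1(0),w_2(0))=(0,0)$. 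This is the step that genuinely exploits the component-wise choice $\lambda_*=(\lambda_1^*,\lambda_2^*)$: it decouples the two kernel directions $\Phi_*,\Psi_*$ from the two parameters.

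To obtain \eqref{2.8} I differentiate $G\bigl(s,w_1(s),w_2(s),\lambda_1(s),\lambda_2(s)\bigr)\equiv0$ in $s$ at $s=0$ and take $Y_{\mathbb C}$-inner products with $\Phi_*$ and $\Psi_*$. The $w_i'(0)$-terms vanish since $\mathcal L_{\lambda_*}\bigl(w_1'(0),w_2'(0)\bigr)^T\in Y_1^2$ and $Y_1^2\perp\mathcal N(\mathcal L_{\lambda_*})$, while $\partial_{\lambda_2}G$ (resp. $\partial_{\lambda_1}G$) is orthogonal to $\Phi_*$ (resp. $\Psi_*$); what remains for the $\Phi_*$-projection is $\langle\Phi_*,\partial_sG|_0\rangle+\lambda_1'(0)\langle\Phi_*,\partial_{\lambda_1}G|_0\rangle=0$, and similarly for $\Psi_*$. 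Computing $\partial_sG|_0$ and integrating by parts the memory term converts $\langle\Phi_*,\partial_sG|_0\rangle$ into $\cos\omega\,[\,d_1\kappa_1-\lambda_1^*(a_{11}\kappa_3+a_{12}\kappa_4)\,]$ via the definitions \eqref{kappa}, whereas $\langle\Phi_*,\partial_{\lambda_1}G|_0\rangle=\cos\omega\int_\Omega r_1\phi_*^2$; the factor $\cos\omega$ cancels, giving the first line of \eqref{2.8}, and the second follows the same way with $\Psi_*$, $\sin\omega$, $d_2$, $\kappa_2,\kappa_5,\kappa_6$. Finally, for $s\in(0,\delta]$ the positivity of $u_s=s\tilde u$ and $v_s=s\tilde v$ on $\Omega$ follows from $\phi_*,\psi_*>0$ in $\Omega$ and $\omega\in(0,\tfrac\pi2)$, together with elliptic regularity (so that $w_1(s),w_2(s)\to0$ in $C^1(\bar\Omega)$) and the Hopf boundary lemma at $\partial\Omega$, after shrinking $\delta$ if necessary. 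The only genuinely delicate point is the surjectivity of $T$ — that the two parameter directions span a complement of $\mathcal R(\mathcal L_{\lambda_*})$ — while the remainder is a routine implicit-function-theorem argument plus the bookkeeping of the $\kappa_j$ and of the $\cos\omega/\sin\omega$ factors.
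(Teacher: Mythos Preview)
Your proof is correct and follows essentially the same route as the paper: insert the ansatz \eqref{2.6} into \eqref{2.2}, divide by $s$, apply the implicit function theorem at $(s,w,\lambda)=(0,0,\lambda_*)$ after checking that the linearisation $T=\mathcal L_{\lambda_*}+(\text{parameter directions})$ is bijective, and then recover \eqref{2.8} by differentiating in $s$ and projecting onto $\phi_*,\psi_*$. You in fact supply more detail than the paper does---the paper merely asserts bijectivity of the Fr\'echet derivative by reference to \cite{LJ2024}, whereas you spell out the Fredholm/transversality argument, and you also address positivity of $(u_s,v_s)$ via the Hopf lemma, a point the paper does not comment on.
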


\begin{proof}
	We assume there exists a solution of \eqref{2.2} in the following form:
	\begin{equation}\label{2.9}
	u=s(\cos{\omega}\phi_*+w_1), \quad
	v=s(\sin{\omega}\psi_*+w_2),
	\end{equation}
where $s\in(0,\delta)$, $\omega\in(0,\frac{\pi}{2})$. Substituting \eqref{2.9} into \eqref{2.2}, we see that $(\lambda_1, \lambda_2, w_1, w_2, s)$ satisfies:
\[
\mathcal F(\lambda_1, \lambda_2, w_1, w_2, s)=(\mathcal L_{\lambda_*}+\mathcal D)\left(\begin{array}{cc}
	\cos{\omega}\phi_*+w_1 \\ \sin{\omega}\psi_*+w_2
\end{array}\right)-s\left(\begin{array}{cc}
	\lambda_1 a_{11}(\cos{\omega}\phi_*+w_1)+\lambda_1 a_{12}(\sin{\omega}\psi_*+w_2) \\ \lambda_2 a_{21}(\cos{\omega}\phi_*+w_1)+\lambda_2 a_{22}(\sin{\omega}\psi_*+w_2)
\end{array}\right),
\]
where 
\[ 
\mathcal D\left(\begin{array}{cc}
	\cos{\omega}\phi_*+w_1 \\ \sin{\omega}\psi_*+w_2
\end{array}\right)=\left(\begin{array}{cc}
	d_1\nabla\cdot((\cos{\omega}\phi_*+w_1)\nabla (s(\cos{\omega}\phi_*+w_1)) \\ d_2\nabla\cdot((\sin{\omega}\psi_*+w_2)\nabla (s(\sin{\omega}\psi_*+w_2))
\end{array}\right),
\]
it is clear that $\mathcal F(\lambda_1, \lambda_2, 0, 0, 0)=0$. Taking the Fr$\Acute{e}$chet derivative  of $\mathcal F (\lambda_1, \lambda_2, w_1, w_2, s)$ with respect to $(\lambda_1, \lambda_2, w_1, w_2)$ at $(\lambda_1^*, \lambda_2^*, 0, 0)$, we obtain that
\begin{align*}
	& D_{(\lambda_1, \lambda_2, w_1, w_2)}\mathcal F (\lambda_1^*, \lambda_2^*, 0, 0, 0)[\epsilon, \varrho, \zeta, \chi] = \mathcal L_{\lambda_*}
\left(\begin{array}{cc}
	\zeta \\ \chi
\end{array}\right)+
\left(\begin{array}{cc}
	\epsilon r_1(x)\cos{\omega}\phi_* \\ \varrho r_2(x)\sin{\omega}\psi_*
\end{array}\right),
\end{align*}
which is a linear mapping. Similiar to the proof of \cite{LJ2024}, one can easily check that $D_{(\xi, \eta, \alpha, \beta)}\mathcal F (\xi_{\lambda_*}, \eta_{\lambda_*}, \alpha_{\lambda_*}, \beta_{\lambda_*}, \lambda_*)$ is bijective from $X_1^2\times\mathbb R^2$ to $Y^2$. It follows from the implicit function theorem that there exists a constant $\delta>0$ and a continuously differentiable map $s \mapsto(\lambda_1(s), \lambda_2(s), (w_1(s), w_2(s))): [0,\delta]\to \mathbb R^2\times X_1^2$ such that $(\lambda_1(s), \lambda_2(s), (w_1(s), w_2(s)))$ satisfies \eqref{2.7} and 
\begin{equation}\label{2.10}
	\mathcal F(\lambda_1(s), \lambda_2(s), w_1(s), w_2(s), s)=0, \quad s\in[0, \delta].
\end{equation}
It is clear that \eqref{2.6} is a positive steady-state solution of \eqref{1}.

Moreover, Taking the derivative of \eqref{2.10} with respect to $s$ evaluated at $s=0$ yields
\begin{equation}\label{2.11}
	\begin{cases}
		0= & \triangle w_1'(0)+d_1\nabla\cdot((\cos{\omega}\phi_*)\nabla(\cos{\omega}\phi_*))+\lambda_1'(0)\cos{\omega}\phi_*r_1(x) \\
		& + \lambda_1^*[w_1'(0)r_1(x)-\cos{\omega}\phi_*(a_{11}\cos{\omega}\phi_*+a_{12}\sin{\omega}\psi_*)],\\
		0= & \triangle w_2'(0)+d_2\nabla\cdot((\sin{\omega}\psi_*)\nabla(\sin{\omega}\psi_*))+\lambda_2'(0)\sin{\omega}\psi_*r_2(x) \\
		& + \lambda_2^*[w_2'(0)r_2(x)-\sin{\omega}\psi_*(a_{21}\cos{\omega}\phi_*+a_{22}\sin{\omega}\psi_*)].
	\end{cases}
\end{equation}
Multiplying the first and second equation of \eqref{2.11} with $\phi_*$ and $\psi_*$, respectively, and then integrating over $\Omega$, we obtain \eqref{2.8}. The proof is completed.
\end{proof}

\section{Stability and Hopf bifurcation}
In this section, we will study the stability of the solution $(u_s,v_s)$ defined in \eqref{2.6}, which is a nonconstant steady-state solution of \eqref{1}. In the following, we denote $\lambda_{1s}:=\lambda_1(s)$ and $\lambda_{2s}:=\lambda_2(s)$ for $s\in[0, \delta]$. Then linearizing the system \eqref{1} at $(u_s, v_s)$, we obtain:
\begin{equation}\label{3.12}
	\begin{cases}
	u_t= \triangle u+d_1\nabla\cdot[u\nabla u_s]+d_1\nabla\cdot[u_s\nabla u(x,t-\tau)] +\lambda_{1s} u(r_1(x)-2a_{11}u_s-a_{12}v_s)-a_{12}\lambda_{1s} u_sv,  &  x\in\Omega, t>0 \\
	v_t= \triangle v+d_2\nabla\cdot[v\nabla v_s]+d_2\nabla\cdot[v_s\nabla v(x,t-\tau)] -a_{21}\lambda_{2s} v_su+\lambda_{2s} v(r_2(x)-a_{21}u_s-2a_{22}v_s),  &  x\in\Omega, t>0 \\
	u(x,t)=v(x,t)=0, & x\in\partial\Omega, t>0.
	\end{cases}
\end{equation}

From \cite{SWW2021}, the semigroup induced by the solutions of \eqref{3.12} has the infinitesimal generator $A_{\tau}(s)$ which is defined by:
\begin{equation}\label{3.14}
	A_{\tau}(s)\Phi=\dot{\Phi},
\end{equation}
with the domain
\[
\mathcal D(A_{\tau}(s))=\lbrace \Phi\in C_{\mathbb C}\cap C_{\mathbb C}^1: \dot{\Phi}(0)= A(s)\Phi+B(s)\Phi(-\tau) \rbrace,
\]
where $\Phi=(\Phi_1, \Phi_2)^T\in X_{\mathbb C}^2$, $C_{\mathbb C}=C((-\infty,0], Y_{\mathbb C}^2)$ and  $C_{\mathbb C}^1=C^1((-\infty,0], Y_{\mathbb C}^2)$, the linear operators $A(s):\mathcal D(A(s))\to Y_{\mathbb C}^2$, $B(s): Y_{\mathbb C}^2\to Y_{\mathbb C}^2$ given by
\[
A(s)\Phi:=
\left(\begin{array}{cc}
	\triangle \Phi_1+ d_1\nabla\cdot(\Phi_1 \nabla u_s)+ \lambda_1 (r_1(x)-2a_{11}u_s-a_{12} v_s) \Phi_1-\lambda_1 a_{12} u_s\Phi_2 \\
	\triangle \Phi_2+ d_2\nabla\cdot(\Phi_2 \nabla v_s) -\lambda_2 a_{21} v_s \Phi_1+ \lambda_2 (r_2(x)-a_{21}u_s-2a_{22}v_s)\Phi_2
\end{array} \right),
\]
and 
\[
B(s)\Phi:=
\left(\begin{array}{cc}
	d_1 \nabla\cdot (u_s \nabla \Phi_1)\\
	d_2 \nabla\cdot (v_s \nabla \Phi_2)
\end{array} \right).
\]
Therefore, $\mu\in\mathbb C$ is referred as an eigenvalue associated with $A_{\tau}(s)$ if and only if there exists $\Psi=(\phi_s, \psi_s)^T \in X_{\mathbb C}^2\backslash\{(0, 0)\}$ such that 
\begin{equation}\label{3.15}
	\Lambda(\mu, s, \tau) \Psi= (A(s)+B(s)e^{-\mu\tau}-\mu I)\Psi=0.
\end{equation}

Firstly, to get the estimated results, we give the following assumption:
\begin{enumerate}
    	\item[$(\bf{H_1})$] $|d_1|<d_1^*:=\dfrac{1}{\max\limits_{s\in[0, \delta]}\|u_s\|_{\infty}}$, \quad
        $|d_2|<d_2^*:=\dfrac{1}{\max\limits_{s\in[0, \delta]}\|v_s\|_{\infty}}$.
\end{enumerate}

\begin{lemma}\label{lemma3.1}
    Under the assumption $\mathbf{(H_{0})}$ and $\mathbf{(H_{1})}$, if $(\mu_s, s, \tau_s, \phi_s, \psi_s) \in \mathbb C\times (0,\delta]\times \mathbb R \times X_{\mathbb C}^2 \backslash \{ (0, 0) \}$ is a solution of \eqref{3.15} with $\mathrm{Re} \mu>0$, then there exists constants $M_2>0$, $M_3>0$ depending on $d_1$ and $d_2$, such that     \begin{equation}\label{3.16}
        \|\nabla\phi_s\|^2_{Y_{\mathbb C}}\le M_2\|\phi_s \|_{Y_{\mathbb C}}(\|\phi_s\|_{Y_{\mathbb C}}+\|\psi_s\|_{Y_{\mathbb C}}), \quad
        \|\nabla\psi_s\|^2_{Y_{\mathbb C}}\le M_3\|\psi_s \|_{Y_{\mathbb C}}(\|\phi_s\|_{Y_{\mathbb C}}+\|\psi_s\|_{Y_{\mathbb C}}).
    \end{equation}
\end{lemma}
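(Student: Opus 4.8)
The plan is to test the first component of the eigenvalue equation \eqref{3.15} against $\bar\phi_s$ in $Y_{\mathbb C}$, integrate by parts, and extract the gradient term. Writing out the first row of $\Lambda(\mu,s,\tau)\Psi=0$ and pairing with $\phi_s$ gives
\[
-\|\nabla\phi_s\|_{Y_{\mathbb C}}^2 + d_1\langle\phi_s,\nabla\cdot(\phi_s\nabla u_s)\rangle + d_1 e^{-\mu\tau}\langle\phi_s,\nabla\cdot(u_s\nabla\phi_s)\rangle
+\lambda_{1s}\langle\phi_s,(r_1(x)-2a_{11}u_s-a_{12}v_s)\phi_s\rangle
-\lambda_{1s}a_{12}\langle\phi_s,u_s\psi_s\rangle
-\mu\|\phi_s\|_{Y_{\mathbb C}}^2=0.
\]
Taking real parts, the term $-\mathrm{Re}\,\mu\,\|\phi_s\|_{Y_{\mathbb C}}^2$ is negative since $\mathrm{Re}\,\mu>0$, so it can be dropped after moving it to the other side; this is the key structural observation that makes the estimate work without controlling $\mu$. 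First I would integrate the memory term by parts: $\langle\phi_s,\nabla\cdot(u_s\nabla\phi_s)\rangle = -\int_\Omega u_s|\nabla\phi_s|^2 - \int_\Omega \phi_s\nabla u_s\cdot\overline{\nabla\phi_s}$ (using that $\phi_s$ vanishes on $\partial\Omega$), and similarly $\langle\phi_s,\nabla\cdot(\phi_s\nabla u_s)\rangle=-\int_\Omega \phi_s\nabla u_s\cdot\overline{\nabla\phi_s}-\int_\Omega|\phi_s|^2\Delta u_s$ (or keep it as $-\int \nabla(\phi_s\bar\phi_s)\cdot\nabla u_s$ and bound directly). The point is that the genuinely dangerous contribution to $\mathrm{Re}$ of the gradient-squared coefficient is $-\|\nabla\phi_s\|_{Y_{\mathbb C}}^2 - d_1\,\mathrm{Re}(e^{-\mu\tau})\int_\Omega u_s|\nabla\phi_s|^2$, and by $(\mathbf H_1)$ we have $|d_1|\,\|u_s\|_\infty<1$, so $-\|\nabla\phi_s\|_{Y_{\mathbb C}}^2 - d_1\mathrm{Re}(e^{-\mu\tau})\int_\Omega u_s|\nabla\phi_s|^2 \le -(1-|d_1|\|u_s\|_\infty)\|\nabla\phi_s\|_{Y_{\mathbb C}}^2$, which is a strictly negative multiple of $\|\nabla\phi_s\|_{Y_{\mathbb C}}^2$ that we can bring to the left.

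Next I would bound the remaining cross terms. The terms $\int_\Omega \phi_s\nabla u_s\cdot\overline{\nabla\phi_s}$ carry a first power of $\nabla\phi_s$; apply Cauchy--Schwarz and then Young's inequality with a small parameter $\varepsilon$ to absorb $\varepsilon\|\nabla\phi_s\|_{Y_{\mathbb C}}^2$ into the good negative term, leaving $C_\varepsilon\|\nabla u_s\|_\infty^2\|\phi_s\|_{Y_{\mathbb C}}^2$ (note $\|u_s\|_{C^1}$ is uniformly bounded for $s\in[0,\delta]$ by the smoothness in Theorem~\ref{thm2.1} together with elliptic regularity, so this is a genuine constant $M$ depending only on $d_1$). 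The zeroth-order terms $\lambda_{1s}\langle\phi_s,(r_1-2a_{11}u_s-a_{12}v_s)\phi_s\rangle$ and $\lambda_{1s}a_{12}\langle\phi_s,u_s\psi_s\rangle$ are bounded by $C\|\phi_s\|_{Y_{\mathbb C}}^2$ and $C\|\phi_s\|_{Y_{\mathbb C}}\|\psi_s\|_{Y_{\mathbb C}}$ respectively, again with $C$ uniform in $s$ because $\lambda_{1s}$, $\|u_s\|_\infty$, $\|v_s\|_\infty$, $\|r_1\|_\infty$ are all bounded on $[0,\delta]$. Collecting everything yields $(1-|d_1|\|u_s\|_\infty-\varepsilon)\|\nabla\phi_s\|_{Y_{\mathbb C}}^2 \le C_1\|\phi_s\|_{Y_{\mathbb C}}^2 + C_2\|\phi_s\|_{Y_{\mathbb C}}\|\psi_s\|_{Y_{\mathbb C}} \le M_2\|\phi_s\|_{Y_{\mathbb C}}(\|\phi_s\|_{Y_{\mathbb C}}+\|\psi_s\|_{Y_{\mathbb C}})$ after fixing $\varepsilon$ small; dividing gives the first inequality in \eqref{3.16}. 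The second inequality follows identically from the second component of \eqref{3.15} tested against $\bar\psi_s$, using the bound on $|d_2|$ from $(\mathbf H_1)$.

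The main obstacle, and the step I would be most careful about, is confirming that all the constants can genuinely be taken uniform in $s\in(0,\delta]$ and independent of $(\mu,\tau)$ — in particular that $\|u_s\|_{C^1(\bar\Omega)}$ and $\|v_s\|_{C^1(\bar\Omega)}$ are bounded. This requires the observation that $(u_s,v_s)$ solves the elliptic system \eqref{2.2}, so by standard $L^p$ and Schauder estimates (the coefficients lie in $C^\alpha$ by $(\mathbf H_0)$ and the $\|u_s\|_\infty,\|v_s\|_\infty$ bounds) one upgrades the $C^0$ bound, which is immediate from \eqref{2.6} and the continuity of $s\mapsto w_i(s)$, to a uniform $C^{1,\alpha}$ bound. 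A secondary subtlety is making sure the integration-by-parts boundary terms vanish: $\phi_s,\psi_s\in X_{\mathbb C}=H^2\cap H_0^1$ vanish on $\partial\Omega$, which kills them, but one should state this explicitly. Everything else is routine Cauchy--Schwarz/Young bookkeeping.
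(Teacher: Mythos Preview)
Your approach is essentially the paper's: test the eigenvalue equation against $\bar\phi_s$, drop the term $-\mathrm{Re}\,\mu\,\|\phi_s\|^2$ by sign, and use $|e^{-\mu\tau}|\le 1$ together with $(\mathbf H_1)$ to retain a strictly positive coefficient on $\|\nabla\phi_s\|^2$. The uniform-in-$s$ regularity of $(u_s,v_s)$ is handled exactly as you suggest, via elliptic regularity (the paper in fact pushes this to $C^{2+\gamma_1}$).

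The one genuine technical difference is the treatment of the drift term $d_1\langle\phi_s,\nabla\cdot(\phi_s\nabla u_s)\rangle$. You propose to integrate by parts to a cross term $\int\phi_s\nabla u_s\cdot\overline{\nabla\phi_s}$ and absorb it via Young's inequality with a small $\varepsilon$. The paper instead observes the clean identity
\[
\mathrm{Re}\,\langle\phi_s,\nabla\cdot(\phi_s\nabla u_s)\rangle=\tfrac12\int_\Omega|\phi_s|^2\,\Delta u_s\,\mathrm dx,
\]
which removes $\nabla\phi_s$ from this term entirely and bounds it directly by $\tfrac12\|\Delta u_s\|_\infty\|\phi_s\|^2$. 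This is why the paper invokes the $C^{2+\gamma_1}$ bound on $u_s,v_s$, whereas your route only needs $C^1$. Both are correct; the paper's identity is a bit cleaner (no $\varepsilon$-juggling), yours slightly more robust in terms of regularity requirements. Note also that your integration-by-parts formulas contain minor sign/term slips (for instance $\langle\phi_s,\nabla\cdot(u_s\nabla\phi_s)\rangle=-\int_\Omega u_s|\nabla\phi_s|^2$ with no extra cross term), but these do not affect the viability of the argument.
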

\begin{proof}
    As in the embedding theorem \cite{AF2003}, for any $s\in(0, \delta]$, we have $u_s, v_s \in C^{1+\gamma}(\Bar{\Omega})$ and $u_s, v_s$ are bounded in $C^{1+\gamma}(\Bar{\Omega})$, where $\gamma\in(0,\frac{1}{2})$. Since $(u_s, v_s)$ is a solution of \eqref{2.2}, it follows from the regularity theory of elliptic equations \cite{GT1977} that $u_s, v_s\in C^{2+\gamma_1}(\Bar{\Omega})$ with $0<\gamma_1<min\{\gamma, \frac{1}{2}\}$, and $u_s, v_s$ are bounded in $C^{2+\gamma_1}(\Bar{\Omega})$, that is, there exists a constant $M_0=M_0(\gamma_1)>0$ such that 
    \begin{equation}\label{18}
    	|u_s|_{2+\gamma_1}\le M_0, \quad |v_s|_{2+\gamma_1}\le M_0.
    \end{equation}
    Denote 
    \begin{align}\label{3.18}
    	& A_{11}^s=\lambda_{1s} (r_1(x)-2a_{11}u_s-a_{12}v_s), \quad 
    	A_{12}^s=-a_{12}\lambda_{1s} u_s, \nonumber\\
    	& A_{21}^s=-a_{21}\lambda_{2s} v_s, \quad
    	A_{22}^s=\lambda_{2s} (r_2(x)-a_{21}u_s-2a_{22}v_s).
    \end{align}
    Substituting  $(\mu_s, \tau_s, \phi_s, \psi_s)$ into \eqref{3.15} gives
    \begin{equation}\label{3.19}
	\begin{cases}
	\triangle \phi_s+d_1\nabla\cdot(\phi_s\nabla u_s)+d_1\nabla\cdot(u_s\nabla \phi_s)e^{-\mu_s\tau_s} +A_{11}^s\phi_s+A_{12}^s\psi_s=0, \\
	\triangle \psi_s+d_2\nabla\cdot(\psi_s\nabla v_s)+d_2\nabla\cdot(v_s\nabla \psi_s)e^{-\mu_s\tau_s} +A_{21}^s\phi_s+A_{22}^s\psi_s=0,
	\end{cases}
	\end{equation}
    Multiplying $\phi_s$ and $\psi_s$ with both sides of the first and second equations of \eqref{3.19}, and integrating over $\Omega$, respectively, we have 
    \begin{align}\label{3.21}
    	\|\nabla\phi_s\|^2_{Y_{\mathbb C}} =&
    	d_1 \mathrm{Re}\{\int_{\Omega}\phi_s \nabla\cdot(\phi_s\nabla u_s)\mathrm{d}x\}
    	+d_1 \mathrm{Re}\{e^{-\mu_s\tau_s}\}\int_{\Omega}u_s|\nabla \phi_s|^2\mathrm{d}x \nonumber\\
    	&+ A_{11}^s \|\phi_s\|^2_{Y_{\mathbb C}}
    	+ A_{12}^s \langle \phi_s, \psi_s \rangle
    	-\mathrm{Re}(\mu_s) \|\phi_s\|^2_{Y_{\mathbb C}}, \nonumber\\
    	\|\nabla\psi_s\|^2_{Y_{\mathbb C}} =&
    	d_2 \mathrm{Re}\{\int_{\Omega}\psi_s \nabla\cdot(\psi_s\nabla v_s)\mathrm{d}x\}
    	+d_2 \mathrm{Re}\{e^{-\mu_s\tau_s}\}\int_{\Omega}v_s|\nabla \psi_s|^2\mathrm{d}x \nonumber\\
    	&+ A_{21}^s\langle \phi_s, \psi_s \rangle
    	+ A_{12}^s \|\psi_s\|^2_{Y_{\mathbb C}}
    	-\mathrm{Re}(\mu_s) \|\psi_s\|^2_{Y_{\mathbb C}}.
    \end{align}
    Since
    \[
    \int_{\Omega}\phi_s\nabla\cdot(\phi_s\nabla u_s)\mathrm{d}x
    =-\int_{\Omega}\phi_s\nabla\cdot(\phi_s\nabla u_s)\mathrm{d}x+\int_{\Omega}|\phi_s|^2\triangle u_s\mathrm{d}x,
    \]
    and
    \[
    \int_{\Omega}\psi_s\nabla\cdot(\psi_s\nabla v_s)\mathrm{d}x
    =-\int_{\Omega}\psi_s\nabla\cdot(\psi_s\nabla v_s)\mathrm{d}x+\int_{\Omega}|\psi_s|^2\triangle v_s\mathrm{d}x,
    \]
    we have 
    \begin{equation}\label{3.22}
        \mathrm{Re}\{\int_{\Omega}\phi_s\nabla\cdot(\phi_s\nabla u_s)\mathrm{d}x\}=\dfrac{1}{2}\int_{\Omega}|\phi_s|^2\triangle u_s\mathrm{d}x, \quad
        \mathrm{Re}\{\int_{\Omega}\psi_s\nabla\cdot(\psi_s\nabla v_s)\mathrm{d}x\}=\dfrac{1}{2}\int_{\Omega}|\psi_s|^2\triangle v_s\mathrm{d}x,
    \end{equation}
    Combining \eqref{18}, \eqref{3.21}, \eqref{3.22} and $\rm{Re} \mu_s>0$ yields
    \begin{align}\label{3.23}
    	\|\nabla\phi_s\|^2_{Y_{\mathbb C}} \le &
    	\dfrac{|d_1|M_0}{2} \|\phi_s \|^2_{Y_{\mathbb C}} 
    	+|d_1|\|u_s\|_{\infty} \|\nabla\phi_s\|^2_{Y_{\mathbb C}}
    	+ M_1(\|\phi_s\|^2_{Y_{\mathbb C}}+\|\phi_s\|_{Y_{\mathbb C}}\|\psi_s\|_{Y_{\mathbb C}}), \nonumber\\
    	\|\nabla\psi_s\|^2_{Y_{\mathbb C}} \le &
    	\dfrac{|d_2|M_0 }{2} \|\psi_s \|^2_{Y_{\mathbb C}} 
    	+|d_2|\|v_s\|_{\infty} \|\nabla\psi_s\|^2_{Y_{\mathbb C}}
    	+ M_1(\|\psi_s\|^2_{Y_{\mathbb C}}+\|\phi_s\|_{Y_{\mathbb C}}\|\psi_s\|_{Y_{\mathbb C}}).
    \end{align}
    where $M_1=\max\limits_{s\in[0, \delta]} \{ \|A_{11}^s\|_{\infty}, \|A_{12}^s\|_{\infty}, \|A_{21}^s\|_{\infty}, \|A_{22}^s\|_{\infty} \}$, which means that 
    \begin{align}\label{e3.24}
    	\|\nabla\phi_s\|^2_{Y_{\mathbb C}}\le 
        \dfrac{(\dfrac{|d_1|}{2}M_0+M_1) \|\phi_s \|^2_{Y_{\mathbb C}}
        	+ M_1\|\phi_s\|_{Y_{\mathbb C}}\|\psi_s\|_{Y_{\mathbb C}}}{1-|d_1|\|u_s\|_{\infty}}
        	=M_2\|\phi_s \|_{Y_{\mathbb C}}(\|\phi_s\|_{Y_{\mathbb C}}+\|\psi_s\|_{Y_{\mathbb C}}), \nonumber\\
        \|\nabla\psi_s\|^2_{Y_{\mathbb C}}\le 
        \dfrac{(\dfrac{|d_2|}{2}M_0+M_1) \|\psi_s \|^2_{Y_{\mathbb C}}
        	+ M_1\|\phi_s\|_{Y_{\mathbb C}}\|\psi_s\|_{Y_{\mathbb C}}}{1-|d_2|\|v_s\|_{\infty}}
        	=M_3\|\psi_s \|_{Y_{\mathbb C}}(\|\phi_s\|_{Y_{\mathbb C}}+\|\psi_s\|_{Y_{\mathbb C}}).
    \end{align}
    The proof is completed.
\end{proof}

Next, we will show the situation when the infinitesimal generator $A_{\tau}(s)$ has a pair of purely imaginary eigenvalues $\mu=\pm i\nu \ (\nu>0)$ for some $\tau\ge 0$ if and only if $\Lambda(\nu, s, \theta)\Psi=0$ is solvable for some $\nu>0$, i.e.
\begin{equation}\label{3.24}
	A(s)\Psi + B(s)\Psi e^{-i\theta}-i\nu \Psi=0,
\end{equation}
where $\theta:=\nu \tau \in [0, 2\pi)$ and $\Psi\in X_{\mathbb C}^2\backslash\{(0,0)\}$. For further discussion, we give the following two lemmas.

\begin{lemma}\label{lemma3.2}
	If $z\in X_{\mathbb C}$ and $\langle\phi, z\rangle=0$, then $\langle (\triangle+l r(x)) z, z\rangle\ge l_2 \langle z,z\rangle$, where $l_2$ is the second eigenvalue of the operator $-(\triangle+l r(x))$.
\end{lemma}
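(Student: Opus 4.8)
The plan is to obtain the estimate from the Courant–Fischer (min–max) characterisation of the second eigenvalue of the self-adjoint Dirichlet operator $\mathcal A:=-(\triangle+lr(x))$. As a first step I would record the spectral picture that makes this characterisation available: since $r\in C^{\alpha}(\bar\Omega)$, the operator $\mathcal A$, regarded on $Y=L^2(\Omega)$ with domain $X=H^2(\Omega)\cap H^1_0(\Omega)$, is self-adjoint, bounded below, and has compact resolvent, so $\sigma(\mathcal A)$ consists of a nondecreasing sequence of real eigenvalues $l_1<l_2\le l_3\le\cdots\to+\infty$ with $l_1$ simple; by the strong maximum principle the principal eigenfunction is of one sign, and this is the function $\phi$ that appears in the statement (normalised so that $\langle\phi,\phi\rangle=1$). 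The associated real eigenfunctions $\{\phi_k\}_{k\ge1}$, $\phi_1=\phi$, form a complete orthonormal basis of $Y$ and hence of $Y_{\mathbb C}$.

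The second step is to read off what the hypothesis $\langle\phi,z\rangle=0$ means. Because $\phi$ is real, splitting $z=\mathrm{Re}\,z+i\,\mathrm{Im}\,z$ shows this is equivalent to $\langle\phi,\mathrm{Re}\,z\rangle=\langle\phi,\mathrm{Im}\,z\rangle=0$, and since $\mathcal A$ is real and symmetric one has $\langle\mathcal A z,z\rangle=\langle\mathcal A(\mathrm{Re}\,z),\mathrm{Re}\,z\rangle+\langle\mathcal A(\mathrm{Im}\,z),\mathrm{Im}\,z\rangle$; thus it suffices to prove the real inequality $\langle\mathcal A w,w\rangle\ge l_2\langle w,w\rangle$ for every real $w\in X$ with $\langle\phi,w\rangle=0$. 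For the third step I would expand such a $w$ in the eigenbasis as $w=\sum_{k\ge2}c_k\phi_k$ — the coefficient $c_1=\langle\phi,w\rangle$ vanishing by hypothesis — and use $\mathcal A w=\sum_{k\ge2}l_kc_k\phi_k$, which is legitimate because $w\in X=\mathcal D(\mathcal A)$, to compute
\[
\langle\mathcal A w,w\rangle=\sum_{k\ge2}l_k|c_k|^2\ \ge\ l_2\sum_{k\ge2}|c_k|^2=l_2\langle w,w\rangle ,
\]
since $l_k\ge l_2$ for all $k\ge2$. Summing the contributions of $\mathrm{Re}\,z$ and $\mathrm{Im}\,z$ then yields $\langle\mathcal A z,z\rangle\ge l_2\langle z,z\rangle$ for all admissible $z\in X_{\mathbb C}$, which is the asserted lower bound. (Equivalently, integrating by parts and using $w|_{\partial\Omega}=0$ turns $\langle\mathcal A w,w\rangle$ into $\int_\Omega(|\nabla w|^2-lr(x)w^2)\,\mathrm{d}x$, so the inequality is just the variational formula $l_2=\inf\{\int_\Omega(|\nabla w|^2-lr(x)w^2)\,\mathrm{d}x:\ w\in H^1_0(\Omega),\ \|w\|_Y=1,\ \langle\phi,w\rangle=0\}$, which one could cite directly.)

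There is no real obstacle here: the whole argument rests on classical facts about Dirichlet Schrödinger operators on a bounded smooth domain — self-adjointness of $\mathcal A$ on $X$, discreteness and simplicity of the bottom of its spectrum, completeness of the eigenfunctions, and the termwise action of $\mathcal A$ on elements of its domain. The only point that genuinely has to be noted is the elementary observation that the single scalar constraint $\langle\phi,z\rangle=0$ amounts exactly to orthogonality to the one-dimensional principal eigenspace, which is what upgrades $l_1$ to $l_2$ in the Rayleigh-quotient lower bound; the reduction from complex to real $z$ is the routine real/imaginary-part splitting.
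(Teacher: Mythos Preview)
Your argument is correct and is exactly the standard spectral/variational proof that the cited reference \cite{BH1996} contains; the paper itself gives no proof beyond pointing to that lemma, so there is nothing further to compare. One minor remark: what you actually establish is $\langle \mathcal A z,z\rangle\ge l_2\langle z,z\rangle$ for $\mathcal A=-(\triangle+lr(x))$, i.e.\ $-\langle(\triangle+lr(x))z,z\rangle\ge l_2\langle z,z\rangle$, which is the inequality that is really used later (and the form appearing in \cite{BH1996}); the sign in the paper's displayed statement is a typo.
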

\begin{proof}
	Similar to the proof in \cite[Lemma 3.2]{BH1996}, we omit it here.
\end{proof}
\begin{lemma}\label{lemma3.3}
	Under the assumption $\mathbf{(H_{0})}$ and $\mathbf{(H_{1})}$, if $(\nu_s, s, \theta_s, \phi_s, \psi_s) \in \mathbb R^+\times (0,\delta]\times [0, 2\pi) \times X_{\mathbb C}^2 \backslash \{ (0, 0) \}$ is a solution of \eqref{3.24}, then $\dfrac{\nu_s}{s}$ is bounded for $s\in(0, \delta]$.
\end{lemma}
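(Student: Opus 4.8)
The plan is to extract an explicit identity for $\nu_s$ by testing \eqref{3.24} against its own eigenfunction, taking imaginary parts, and then showing that every term produced is $O(s)$ uniformly in $s\in(0,\delta]$. First I would normalize, assuming without loss of generality that $\|\phi_s\|_{Y_{\mathbb C}}^2+\|\psi_s\|_{Y_{\mathbb C}}^2=1$, which is legitimate because \eqref{3.24} is linear and homogeneous in $(\phi_s,\psi_s)$. Next I would observe that the gradient estimate \eqref{3.16} of Lemma \ref{lemma3.1} applies to any solution of \eqref{3.24} as well: \eqref{3.24} is precisely \eqref{3.15} with $\mu=i\nu_s$ and $\tau=\theta_s/\nu_s\ge 0$, and the proof of Lemma \ref{lemma3.1} used only $\mathrm{Re}\,\mu_s\ge 0$ (the term $-\mathrm{Re}(\mu_s)\|\phi_s\|_{Y_{\mathbb C}}^2$ was merely discarded as nonpositive). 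Combined with the normalization, this yields a uniform bound $\|\nabla\phi_s\|_{Y_{\mathbb C}},\|\nabla\psi_s\|_{Y_{\mathbb C}}\le C$ on $(0,\delta]$.

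Writing \eqref{3.24} componentwise as in \eqref{3.19}, but with the factor $e^{-i\theta_s}$ and the extra terms $-i\nu_s\phi_s$, $-i\nu_s\psi_s$, I would take the $Y_{\mathbb C}$-inner product of the first equation with $\phi_s$ and of the second with $\psi_s$, add them, and extract the imaginary part. Using that $u_s,v_s,A_{11}^s,A_{22}^s$ are real-valued, that all boundary terms vanish ($\phi_s=\psi_s=0$ on $\partial\Omega$), and that $\langle\phi_s,\nabla\cdot(u_s\nabla\phi_s)\rangle=-\int_\Omega u_s|\nabla\phi_s|^2\mathrm{d}x\in\mathbb R$, $\mathrm{Im}\,\langle\phi_s,\nabla\cdot(\phi_s\nabla u_s)\rangle=\int_\Omega\mathrm{Im}(\bar\phi_s\nabla\phi_s)\cdot\nabla u_s\,\mathrm{d}x$, $\mathrm{Im}(\bar\psi_s\phi_s)=-\mathrm{Im}(\bar\phi_s\psi_s)$, and $A_{12}^s-A_{21}^s=-a_{12}\lambda_{1s}u_s+a_{21}\lambda_{2s}v_s$, a direct computation gives
\begin{equation*}
\begin{aligned}
\nu_s=\ &d_1\int_\Omega\mathrm{Im}(\bar\phi_s\nabla\phi_s)\cdot\nabla u_s\,\mathrm{d}x+d_2\int_\Omega\mathrm{Im}(\bar\psi_s\nabla\psi_s)\cdot\nabla v_s\,\mathrm{d}x+d_1\sin\theta_s\int_\Omega u_s|\nabla\phi_s|^2\mathrm{d}x\\
&+d_2\sin\theta_s\int_\Omega v_s|\nabla\psi_s|^2\mathrm{d}x+\int_\Omega(A_{12}^s-A_{21}^s)\,\mathrm{Im}(\bar\phi_s\psi_s)\,\mathrm{d}x.
\end{aligned}
\end{equation*}

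Finally I would estimate the five terms on the right. From $u_s=s(\cos\omega\,\phi_*+w_1(s))$ and $v_s=s(\sin\omega\,\psi_*+w_2(s))$, with $w_1,w_2$ continuously differentiable on $[0,\delta]$ and $(w_1(0),w_2(0))=(0,0)$, the rescaled functions $\widetilde u_s:=u_s/s$, $\widetilde v_s:=v_s/s$ are bounded in $H^2(\Omega)$ uniformly for $s\in[0,\delta]$, and in fact in $C^{2+\gamma_1}(\bar\Omega)$ by running the elliptic-regularity argument of Lemma \ref{lemma3.1} on the equations that $\widetilde u_s,\widetilde v_s$ satisfy (obtained from \eqref{2.2} after dividing by $s$, which stays uniformly elliptic for small $s$). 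Hence $\|u_s\|_\infty,\|v_s\|_\infty,\|\nabla u_s\|_\infty,\|\nabla v_s\|_\infty=O(s)$, and, since $\lambda_{1s},\lambda_{2s}$ are continuous on $[0,\delta]$, also $\|A_{12}^s\|_\infty,\|A_{21}^s\|_\infty=O(s)$. Combining these facts with the uniform gradient bounds of the previous paragraph, with $\|\phi_s\|_{Y_{\mathbb C}},\|\psi_s\|_{Y_{\mathbb C}}\le 1$, with the Cauchy--Schwarz inequality and a Sobolev embedding (for instance $H^1(\Omega)\hookrightarrow L^4(\Omega)$ to handle $\int_\Omega|\phi_s|\,|\nabla\phi_s|\,|\nabla u_s|\,\mathrm{d}x$), each term is bounded by $Cs$; therefore $0<\nu_s\le Cs$ for all $s\in(0,\delta]$, so $\nu_s/s$ is bounded. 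I expect this last step to be the main obstacle, specifically securing the $O(s)$ scaling of $\nabla u_s$ and $\nabla v_s$: the crude uniform bound $|u_s|_{2+\gamma_1}\le M_0$ of Lemma \ref{lemma3.1} does not suffice, so one must pass to $\widetilde u_s,\widetilde v_s$ and carry out the regularity bootstrap uniformly in $s$.
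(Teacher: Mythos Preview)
Your approach is the paper's approach, executed more carefully. Both test \eqref{3.24} against the eigenfunction $\Psi=(\phi_s,\psi_s)^T$, take the imaginary part to isolate $\nu_s$, invoke the gradient bound of Lemma~\ref{lemma3.1} (you are right that its proof uses only $\mathrm{Re}\,\mu_s\ge 0$), and use the $O(s)$ scaling of $(u_s,v_s)$ from \eqref{2.6}. The paper's proof simply asserts the identity $\nu_s\langle\Psi,\Psi\rangle=-\sin\theta_s\langle B(s)\Psi,\Psi\rangle$ and then bounds the right side by $s(|d_1|M_4\|\nabla\phi_s\|^2+|d_2|M_5\|\nabla\psi_s\|^2)$ using only $\|u_s\|_\infty,\|v_s\|_\infty=O(s)$, which sidesteps your $\nabla u_s,\nabla v_s$ issue entirely. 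Strictly speaking, however, $\mathrm{Im}\langle A(s)\Psi,\Psi\rangle$ is \emph{not} zero: the three extra terms in your five-term identity are precisely what the paper silently drops, so your formula is the accurate one and the regularity concern you flag at the end is a genuine point the paper glosses over. Your proposed bootstrap on $\widetilde u_s,\widetilde v_s$ works; a slightly cheaper route is to note that $s\mapsto(w_1(s),w_2(s))$ is continuous into $X_1\subset H^2(\Omega)$, so $\nabla u_s/s,\nabla v_s/s$ stay bounded in $H^1(\Omega)\hookrightarrow L^q(\Omega)$ for an appropriate $q$, which is enough to pair with $\bar\phi_s\nabla\phi_s$ controlled in $L^{q'}$ by your uniform $H^1$ bound.
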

\begin{proof}
	Calculating the inner product of \eqref{3.24} with $\Psi=(\phi_s, \psi_s)^T$, we have
	\begin{equation}\label{3.25}
		\langle A(s)\Psi +B(s)\Psi e^{-i\theta_s}-i\nu_s \Psi, \Psi \rangle=0.
	\end{equation}
	Separating the real and imaginary parts of \eqref{3.25}, we obtain 
	\begin{align*}
	\nu_s \langle \Psi, \Psi \rangle = -\sin{\theta_s} \langle B(s)\Psi, \Psi \rangle
	= \sin{\theta_s}(d_1\langle u_s\nabla\phi_s, \nabla\phi_s \rangle
	+d_2\langle v_s\nabla\psi_s, \nabla\psi_s \rangle).
	\end{align*}
	Therefore,
	\begin{align*}
		\dfrac{| \nu_s |}{s} &= 
		\dfrac{1}{\| \phi_s \|_{Y_{\mathbb C}}^2+\| \psi_s \|_{Y_{\mathbb C}}^2}\sin{\theta_s}(|d_1|\|\cos{\omega}\phi_*+w_1(s)\|_{\infty}\|\nabla\phi_s\|^2_{Y_{\mathbb C}}+|d_2|\|\sin{\omega}\psi_*+w_2(s)\|_{\infty}\|\nabla\psi_s\|^2_{Y_{\mathbb C}}) \\
		&\le \dfrac{1}{\| \phi_s \|_{Y_{\mathbb C}}^2+\| \psi_s \|_{Y_{\mathbb C}}^2}(|d_1|M_4\|\nabla\phi_s\|^2_{Y_{\mathbb C}}+|d_2|M_5\|\nabla\psi_s\|^2_{Y_{\mathbb C}}),
	\end{align*}
	where $M_4=\max\limits_{s\in[0, \delta]} \{ \|\cos{\omega}\phi_*+w_1(s)\|_{\infty} \}$ and $M_5=\max\limits_{s\in[0, \delta]} \{ \|\sin{\omega}\psi_*+w_2(s)\|_{\infty} \}$. From the boundedness of $\|\nabla\phi_s\|^2_{Y_{\mathbb C}}$ and $\|\nabla\psi_s\|^2_{Y_{\mathbb C}}$ in \eqref{3.23}, we can obtain that $\dfrac{\nu_s}{s}$ is bounded for $s\in(0,\delta]$. The proof is completed. 
\end{proof}

In the following, we consider whether $A_{\tau}(s)$ has purely imaginary eigenvalues $\mu=\pm i\nu$ for some $\tau\ge 0$. Note that the decomposition $X_{\mathbb C}^2=\mathcal{\mathcal L_{\lambda_*}}\oplus (X_1)_{\mathbb C}^2$. Suppose that $(\nu, s, \theta, \varphi)$ is a solution of \eqref{3.24} with $\varphi=(\varphi_1, \varphi_2)^T\in X_{\mathbb C}^2\backslash\{(0,0)\}$, then we can rewrite $\varphi=(\varphi_1, \varphi_2)$ as 
\begin{align}\label{3.26}
	& \nu=sh, \quad h>0, \nonumber \\
    & \varphi=\Phi_*+(p_1+ip_2)\Psi_*+sz, \nonumber \\
    & \|\varphi\|^2_{Y_{\mathbb C}}=\|\Phi\|^2_{Y_{\mathbb C}}+\|\Psi\|^2_{Y_{\mathbb C}},
\end{align}
where $p_1$, $p_2\in \mathbb R$ and $z=(z_1, z_2)^T\in (X_1)_{\mathbb C}^2$. Substituting \eqref{3.26} and \eqref{2.6} into \eqref{3.24}, we have
\begin{equation}\label{3.27}
    \begin{cases}
        G_1(z_1, z_2, p_1, p_2, h, \theta, s):=
        &(\triangle+\lambda_1^*r_1(x))z_1 \\
        &+d_1\nabla\cdot[(\phi_*+sz_1)\nabla(\cos{\omega}\phi_*+w_1(s))] \\
        &+d_1\nabla\cdot[(\cos{\omega}\phi_*+w_1(s))\nabla (\phi_*+sz_1)]e^{-i\theta} \\
        &+(\phi_*+sz_1)[(\lambda_1'(0)+o(s))r_1(x)-ih] \\
        &-\lambda_{1s}(\phi_*+sz_1)[2a_{11}(\cos{\omega}\phi_*+w_1(s))+a_{12}(\sin{\omega}\psi_*+w_2(s))]\\ 
        &-((p_1+ip_2)\psi_*+sz_2)a_{12}\lambda_{1s}(\cos{\omega}\phi_*+w_1(s)) \\
        G_2(z_1, z_2, p_1, p_2, h, \theta, s):=
        &(\triangle+\lambda_2^*r_2(x))z_2 \\
        &+d_2\nabla\cdot[((p_1+ip_2)\psi_*+sz_2)\nabla(\sin{\omega}\psi_*+w_2(s))] \\
        &+d_2\nabla\cdot[(\sin{\omega}\psi_*+w_2(s))\nabla ((p_1+ip_2)\psi_*+sz_2)]e^{-i\theta} \\
        &-(\phi_*+sz_1)a_{21}\lambda_{2s}(\sin{\omega}\psi_*+w_2(s)) \\
        &+\lambda_{2s}((p_1+ip_2)\psi_*+sz_2)[(\lambda_2'(0)+o(s))r_2(x)-ih] \\
        &-((p_1+ip_2)\psi_*+sz_2)[a_{21}(\cos{\omega}\phi_*+w_1(s))+2a_{22}(\sin{\omega}\psi_*+w_2(s))]\\   
        G_3(z_1, z_2, p_1, p_2, h, \theta, s):=
        &(p_1^2+p_2^2-1)\|\psi_*\|^2_{Y_{\mathbb C}}+s^2\|z\|^2_{Y_{\mathbb C}}.
    \end{cases}
\end{equation}

We define the continuous differential map $G: (X_1)_{\mathbb C}^2\times \mathbb R^4\times [0,\delta]\to Y_{\mathbb C}^2\times \mathbb R$ with
\begin{equation}
	G(z_1, z_2, p_1, p_2, h, \theta, s):=(G_1, G_2, G_3).
\end{equation} 
Denote
\begin{align}\label{K}
	& K_1:=a_{11}\lambda_1^*\kappa_3=a_{11}\lambda_1^*\int_{\Omega}\cos{\omega}\phi_*^3\mathrm{d}x>0, \quad
	K_2:=a_{22}\lambda_2^*\kappa_6=a_{22}\lambda_2^*\int_{\Omega}\sin{\omega}\psi_*^3\mathrm{d}x>0, \nonumber\\
	& K_3:=a_{21}\lambda_2^*\kappa_8=a_{21}\lambda_2^*\int_{\Omega}\sin{\omega}\phi_*\psi_*^2\mathrm{d}x>0, \quad
	K_4:=a_{12}\lambda_1^*\kappa_7=a_{12}\lambda_1^*\int_{\Omega}\cos{\omega}\phi_*^2\psi_*\mathrm{d}x>0,
\end{align}
three domains (shown in Figure \ref{f1.1}):
\begin{align}
	D_1:= & \{ (d_1, d_2)\in \mathbb R^2| \dfrac{K_3\kappa_1}{K_4\kappa_2}d_1+\dfrac{|K_1K_3-K_2K_4|}{K_4\kappa_2}< d_2 <\dfrac{K_3\kappa_1}{K_4\kappa_2}d_1-\dfrac{|K_1K_3-K_2K_4|}{K_4\kappa_2} \}, \nonumber\\
	D_2:= & \{ (d_1, d_2)\in \mathbb R^2|-\dfrac{\kappa_1}{\kappa_2}d_1+\dfrac{K_1+K_2}{\kappa_2}< d_2 <-\dfrac{\kappa_1}{\kappa_2}d_1-\dfrac{K_1+K_2}{\kappa_2} \}, \nonumber\\
	D_3= & D_3^1\cup D_3^2:= \complement_{\mathbb R^2}(D_1\cup D_2),
\end{align}
where $\complement_{\mathbb R^2}D$ represent the complement of set $D$ in $\mathbb R^2$. Focusing on $(d_1, d_2)\in D_3$, we make the following two hypotheses:
\begin{enumerate}
    \item[$(\bf{H_2})$] $K_1K_3-K_2K_4<0$, $K_1-K_4>0$, $K_3-K_4>0$,
    \item[$(\bf{H_3})$] $K_1K_3-K_2K_4>0$, $K_1-K_4>0$, $K_3-K_4<0$,
\end{enumerate}
and define a critical value
\[
d^*:=\dfrac{K_1K_3-K_2K_4}{d_1\kappa_1K_3-d_2\kappa_2K_4}.
\]

\begin{figure}[htbp]
\centering
\begin{tikzpicture}
    \begin{axis}[
        axis lines=middle,
        xmin=-2, xmax=2,
        ymin=-2, ymax=2,
        width=8cm, height=8cm,
        grid=none,
        xtick=\empty, 
        ytick=\empty,
        xlabel={$d_1$}, 
        ylabel={$d_2$},
        domain=-3:3, 
        enlargelimits=true, 
        samples=100 
    ]  
    \addplot [
            pattern=north east lines, 
            pattern color=olive, 
            opacity=0.5, 
        ]
        coordinates {(0.25,0.75) (2.5,3) (-2,3)} ;
    \addplot [
            pattern=north east lines, 
            pattern color=olive, 
            opacity=0.5, 
        ]
        coordinates {(-0.25,-0.75) (-2.5,-3) (2,-3)} ;
    \addplot [
            pattern=north west lines, 
            pattern color=olive, 
            opacity=0.5, 
        ]
        coordinates {(0.75,0.25) (3,2.5) (3,-2)} ;
    \addplot [
            pattern=north west lines, 
            pattern color=olive, 
            opacity=0.5, 
        ]
        coordinates {(-0.75,-0.25) (-3,-2.5) (-3,2)} ;
    \addplot [
    		pattern=north east lines, 
            pattern color=c1, 
            opacity=0.5, 
        ]
        coordinates {(3,-2) (3,-3) (2,-3) (-3,2) (-3,3) (-2,3)} ;   
        \addplot[gray, thick] {-1*x+1};
        \addplot[gray, thick] {-1*x-1};
        \addplot[gray, thick] {x+0.5};
        \addplot[gray, thick] {x-0.5};
        \node at (axis cs:1,1) [fill=none, text=black] {$D_1$};
        \node at (axis cs:-1, 1) [fill=none, text=black] {$D_2$};
        \node at (axis cs:2, 0.5) [fill=none, text=black] {$D_3^2$};
        \node at (axis cs:1, 2) [fill=none, text=black] {$D_3^1$};
        \node at (axis cs:-0.5, -2) [fill=none, text=black] {$D_3^1$};
        \node at (axis cs:-2, -0.5) [fill=none, text=black] {$D_3^2$};
    \end{axis}
    \node at (5, 1) [fill=none, text=black] {$l_1$};
    \node at (6, 1) [fill=none, text=black] {$l_2$};
    \node at (6, 5) [fill=none, text=black] {$l_3$};
	\node at (6, 6) [fill=none, text=black] {$l_4$};
\end{tikzpicture}
\caption{The regions of $(d_1, d_2)$ in $D_1$, $D_2$ and $D_3$, where $l_1: d_2=-\frac{\kappa_1}{\kappa_2}d_1+\frac{K_1+K_2}{\kappa_2}$, $l_2: d_2=-\frac{\kappa_1}{\kappa_2}d_1-\frac{K_1+K_2}{\kappa_2}$, $l_3: d_2=\frac{K_3\kappa_1}{K_4\kappa_2}d_1+\frac{|K_1K_3-K_2K_4|}{K_4\kappa_2}$, $l_4: d_2=\frac{K_3\kappa_1}{K_4\kappa_2}d_1-\frac{|K_1K_3-K_2K_4|}{K_4\kappa_2}$.}
\label{f1.1}
\end{figure}
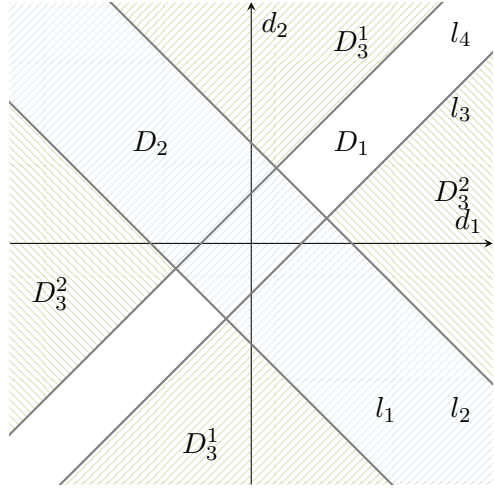

\begin{theorem}\label{th3.4}
	Under the assumption $\mathbf{(H_{0})}$, $\mathbf{(H_{1})}$ and $\mathbf{(H_{2})}$, if
	\begin{enumerate}
		\item[$\mathbf{(H_5)}$] $\dfrac{K_2\kappa_1}{K_1\kappa_2}<\dfrac{d_2}{d_1}<\dfrac{(K_2-K_3)\kappa_1}{(K_1-K_4)\kappa_2}$, and $(d_1, d_2)\in D_3^1$,
	\end{enumerate} 
    then $d^*\in[-1,1]$, there exists a unique solution of $G(z_1, z_2, p_1, p_2, h, \theta, s)=0$ when $s=0$, which can be expressed as
    	\begin{equation}\label{3.31}
    		W^0:=(z_1^0, z_2^0, p_1^0, p_2^0, h^0, \theta^0),
    	\end{equation}
    Here,
    \begin{align}\label{3.32}
		& p_1^0=\dfrac{d_2\kappa_2K_1-d_1\kappa_1K_2}{d_1\kappa_1K_3-d_2\kappa_2K_4}<0, \quad 
		p_2^0=\dfrac{\sqrt{(d_1\kappa_1K_3-d_2\kappa_2K_4)^2-(d_2\kappa_2K_1-d_1\kappa_1K_2)^2}}{d_2\kappa_2K_4-d_1\kappa_1K_3}<0, \nonumber\\
		& h^0=\dfrac{d_2\kappa_2K_4+d_1\kappa_1K_3}{d_1\kappa_1-d_2\kappa_2}p_2>0, \quad
		\theta^0=\arccos{d_*}>0,
	\end{align}
	where  $\kappa_1$, $\kappa_2$ and $K_i$, $(i=1,2,3,4)$, are defined in \eqref{kappa} and \eqref{K}, and then $(z_1^0, z_2^0)\in (X_1)_{\mathbb C}^2$ is the unique solution of 
	\begin{equation}\label{3.33}
    \begin{cases}
        0=& (\triangle+\lambda_1^*r_1(x))z_1+ d_1[\nabla\cdot(\phi_*\nabla(\cos{\omega}\phi_*))+\nabla\cdot(\cos{\omega}\phi_*\nabla \phi_*)e^{-i\theta^0}] \\
        &+\phi_*(\lambda_1'(0)r_1(x)-ih^0)-\lambda_1^*(2a_{11}\cos{\omega}\phi_*^2+a_{12}\sin{\omega}\phi_*\psi_*)-(p_1^0+ip_2^0)\lambda_1^*a_{12}\cos{\omega}\phi_*\psi_* , \\
        0=& (\triangle+\lambda_2^*r_2(x))z_2+ d_2(p_1^0+ip_2^0)[\nabla\cdot(\psi_*\nabla(\sin{\omega}\psi_*))+\nabla\cdot(\sin{\omega}\psi_*\nabla \psi_*)e^{-i\theta^0}] \\
        &+(p_1^0+ip_2^0)\psi_*(\lambda_2'(0)r_2(x)-ih^0)-a_{21}\lambda_2^*\sin{\omega}\phi_*\psi_*-\lambda_2^*(p_1^0+ip_2^0)(a_{21}\cos{\omega}\phi_*\psi_*+2a_{22}\sin{\omega}\psi_*^2).
    \end{cases}
	\end{equation}
\end{theorem}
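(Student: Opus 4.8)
\medskip
\noindent\textbf{Proof plan.} The plan is to solve $G(z_1,z_2,p_1,p_2,h,\theta,s)=0$ at $s=0$ by a Lyapunov--Schmidt reduction against $\mathcal L_{\lambda_*}$. Putting $s=0$ (and recalling $w_1(0)=w_2(0)=0$), the first two components of $G$ are affine in $(z_1,z_2)$: write $(G_1,G_2)^T=\mathcal L_{\lambda_*}(z_1,z_2)^T+(R_1,R_2)^T$, where the forcing pair $(R_1,R_2)$ depends only on $(p_1,p_2,h,\theta)$. Since $\mathcal L_{\lambda_*}$ is self-adjoint with $\mathcal N(\mathcal L_{\lambda_*})=\mathrm{span}\{\Phi_*,\Psi_*\}$, $\mathcal R(\mathcal L_{\lambda_*})=(Y_1)_{\mathbb C}^2$, and $\mathcal L_{\lambda_*}|_{(X_1)_{\mathbb C}^2}:(X_1)_{\mathbb C}^2\to(Y_1)_{\mathbb C}^2$ an isomorphism, the equations $G_1=G_2=0$ are equivalent to the two scalar solvability conditions $\langle\phi_*,R_1\rangle=0$, $\langle\psi_*,R_2\rangle=0$ together with $(z_1,z_2)^T=-\mathcal L_{\lambda_*}^{-1}(R_1,R_2)^T$; this last identity is precisely \eqref{3.33}. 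Since at $s=0$ one has $G_3=(p_1^2+p_2^2-1)\|\psi_*\|^2_{Y_{\mathbb C}}=p_1^2+p_2^2-1$, the whole problem reduces to the finite-dimensional system $\langle\phi_*,R_1\rangle=0$, $\langle\psi_*,R_2\rangle=0$, $p_1^2+p_2^2=1$ in $(p_1,p_2,h,\theta)$.

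Next I would evaluate the two solvability conditions. Pairing $R_1$ with $\phi_*$ and $R_2$ with $\psi_*$ in $Y_{\mathbb C}$, using $\|\phi_*\|^2_{Y_{\mathbb C}}=\|\psi_*\|^2_{Y_{\mathbb C}}=1$, the abbreviations \eqref{kappa}, \eqref{K}, and --- decisively --- substituting the values of $\lambda_1'(0),\lambda_2'(0)$ from \eqref{2.8} to cancel the $r_j(x)$-weighted integrals, the two conditions collapse to
\[
d_1\kappa_1e^{-i\theta}=K_1+ih+(p_1+ip_2)K_4,\qquad (p_1+ip_2)\bigl(d_2\kappa_2e^{-i\theta}-K_2-ih\bigr)=K_3.
\]
Writing $w:=p_1+ip_2$ and eliminating $e^{-i\theta}$ between these yields a quadratic equation for $w$ with $h$-dependent coefficients; taking its complex conjugate, multiplying by $w^2$ and using $\bar w=1/w$ (which is exactly $p_1^2+p_2^2=1$), I can eliminate $w^2$ and solve linearly for $w$ as a M\"obius function of $h$. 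Imposing $|w|^2=1$ once more determines $h^2$, hence $h=h^0$ after selecting the branch with $h>0$; back-substitution produces $p_1^0=\mathrm{Re}\,w$, $p_2^0=\mathrm{Im}\,w$ as in \eqref{3.32}, and the first identity gives, with $p_1^0=(d_2\kappa_2K_1-d_1\kappa_1K_2)/(d_1\kappa_1K_3-d_2\kappa_2K_4)$,
\[
\cos\theta^0=\mathrm{Re}(e^{-i\theta^0})=\frac{K_1+K_4p_1^0}{d_1\kappa_1}=\frac{K_1K_3-K_2K_4}{d_1\kappa_1K_3-d_2\kappa_2K_4}=d^*,
\]
so $\theta^0=\arccos d^*$ once $d^*\in[-1,1]$ is known, while $\mathrm{Im}(e^{-i\theta^0})$ fixes $\sin\theta^0$; together with $h^0>0$ this forces uniqueness, and inverting $\mathcal L_{\lambda_*}$ recovers $(z_1^0,z_2^0)\in(X_1)_{\mathbb C}^2$, i.e. \eqref{3.33}.

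It then remains to verify $d^*\in[-1,1]$ and the signs $p_1^0<0$, $p_2^0<0$, $h^0>0$ in \eqref{3.32}, which is where $\mathbf{(H_1)}$, $\mathbf{(H_2)}$, $\mathbf{(H_5)}$ enter and which I expect to be the main obstacle. Setting $\alpha:=d_1\kappa_1K_3-d_2\kappa_2K_4$, $\gamma:=d_2\kappa_2K_1-d_1\kappa_1K_2$, the solving step gives $p_1^0=\gamma/\alpha$, $p_2^0=-\sqrt{\alpha^2-\gamma^2}/\alpha$, $h^0=(d_1\kappa_1K_3+d_2\kappa_2K_4)p_2^0/(d_1\kappa_1-d_2\kappa_2)$ and $\cos\theta^0=(K_1K_3-K_2K_4)/\alpha$, where one should keep in mind $\kappa_1,\kappa_2<0$ by integration by parts. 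Using $K_1K_3-K_2K_4<0$, $K_1-K_4>0$, $K_3-K_4>0$ from $\mathbf{(H_2)}$, the membership $(d_1,d_2)\in D_3^1$ and the ratio bounds $\tfrac{K_2\kappa_1}{K_1\kappa_2}<\tfrac{d_2}{d_1}<\tfrac{(K_2-K_3)\kappa_1}{(K_1-K_4)\kappa_2}$ from $\mathbf{(H_5)}$ (and $\mathbf{(H_1)}$, which keeps $u_s,v_s$ and the whole construction in force), one must check that $\gamma$ and $\alpha$ have opposite signs (so $p_1^0<0$), that $\alpha^2-\gamma^2>0$ so that $p_2^0,h^0$ are real with the sign pattern yielding $p_2^0<0$ and $h^0>0$, and that $|K_1K_3-K_2K_4|\le|\alpha|$ with the sign of $\alpha$ placing $d^*$ in $[-1,1]$. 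The functional-analytic reduction itself is routine once the simplicity of the principal eigenvalues $\lambda_1^*,\lambda_2^*$ (cf. Lemma \ref{lemma3.2}) and the stated smoothness of $G$ are used; the delicate part is entirely the region-by-region bookkeeping of these sign relations.
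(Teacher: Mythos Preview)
Your proposal is correct and follows the same Lyapunov--Schmidt strategy as the paper: set $s=0$, project $G_1,G_2$ onto $\phi_*,\psi_*$ to obtain the two complex solvability conditions (exactly the paper's \eqref{3.36}), and combine with $p_1^2+p_2^2=1$ from $G_3=0$. The only difference is in the algebra: the paper simply separates real and imaginary parts of \eqref{3.36} to get four real equations \eqref{3.37}, from which $p_1^0$ drops out linearly (first and third equations), then $\cos\theta^0=d^*$, then $p_2^0$ from the unit-circle constraint, and finally $h^0$ from the second and fourth equations --- this is more direct than your M\"obius/conjugate elimination, though both routes lead to \eqref{3.32}.
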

\begin{proof}
    Firstly, when $s=0$, the equation $G(z_1, z_2, p_1, p_2, h, \theta, 0)=0$ is equivalent to 
    \begin{equation}\label{3.34}
    \begin{cases}
        0=& (\triangle+\lambda_1^*r_1(x))z_1+ d_1[\nabla\cdot(\phi_*\nabla(\cos{\omega}\phi_*))+\nabla\cdot(\cos{\omega}\phi_*\nabla \phi_*)e^{-i\theta}] \\
        &+\phi_*(\lambda_1'(0)r_1(x)-ih)-\lambda_1^*(2a_{11}\cos{\omega}\phi_*^2+a_{12}\sin{\omega}\phi_*\psi_*)-(p_1+ip_2)\lambda_1^*a_{12}\cos{\omega}\phi_*\psi_* , \\
        0=& (\triangle+\lambda_2^*r_2(x))z_2+ d_2(p_1+ip_2)[\nabla\cdot(\psi_*\nabla(\sin{\omega}\psi_*))+\nabla\cdot(\sin{\omega}\psi_*\nabla \psi_*)e^{-i\theta}] \\
        &+(p_1+ip_2)\psi_*(\lambda_2'(0)r_2(x)-ih)-a_{21}\lambda_2^*\sin{\omega}\phi_*\psi_*-\lambda_2^*(p_1+ip_2)(a_{21}\cos{\omega}\phi_*\psi_*+2a_{22}\sin{\omega}\psi_*^2) , \\
        0=& p_1^2+p_2^2-1.
    \end{cases}
	\end{equation}
	From the third equation of \eqref{3.34}, we have
	\begin{equation}\label{3.35}
		p_1^2+p_2^2=1.
	\end{equation}
	Multiplying the first and second equation of \eqref{3.34} with $\phi_*$ and $\psi_*$, respectively, and then integrating them over $\Omega$, gives that
	\begin{equation}\label{3.36}
    \begin{cases}
        d_1\kappa_1e^{-i\theta}-K_1-K_4(p_1+ip_2)=ih, \\
        d_2(p_1+ip_2)\kappa_2e^{-i\theta}-K_2(p_1+ip_2)-K_3 =ih(p_1+ip_2).
    \end{cases}
    \end{equation}
    Separating the real and imaginary parts of two equations for \eqref{3.36}, we obtain
    \begin{equation}\label{3.37}
    \begin{cases}
        d_1\kappa_1\cos{\theta}-K_1-K_4p_1=0, \\
        d_1\kappa_1\sin{\theta}+K_4p_2=-h, \\
        d_2\kappa_2\cos{\theta}-K_2-K_3p_1=0, \\
    	d_2\kappa_2\sin{\theta}-K_3p_2=-h.
    \end{cases}
    \end{equation}
	Combining assumption $(\bf{H_2})$, four equations of \eqref{3.37} and \eqref{3.35} gives \eqref{3.32}. The proof is completed.
\end{proof}

When $s>0$, from the implicit function theorem, the following result regarding the eigenvalue problem from $s$ to 0 can be obtained.
\begin{theorem}\label{th3.5}
	under the assumption $\mathbf{(H_{0})}$, $\mathbf{(H_{1})}$, $\mathbf{(H_{2})}$ and $\mathbf{(H_5)}$, there exists a constant $\bar{\delta}\in(0,\delta)$ and a continuously differentiable map $W(s): [0,\bar{\delta}]\to (X_1)_{\mathbb C}^2\times \mathbb R^4$ with $W(s):=(z_1^s, z_2^s, p_1^s, p_2^s, h^s, \theta^s)$, such that 
    \[
    G(W(s), s)=0, \quad W(0)=W^0,
    \]
    where $W^0$ is defined by \eqref{3.31}. Moreover, if there exists another $\Bar{W}(0):=(\Bar{z}_1^s, \Bar{z}_2^s, \Bar{p}_1^s, \Bar{p}_2^s, \Bar{h}^s, \Bar{\theta}^s)$ such that $G(\Bar{W}(s), s)=0$ with $\Bar{h}^s>0$ and $\Bar{\theta}^s>0$, then $\Bar{W}(s)=W(s)$.
\end{theorem}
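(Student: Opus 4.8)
The plan is to produce $W(s)$ as a branch of zeros of $G$ bifurcating from $(W^0,0)$ by the implicit function theorem, and then to upgrade the \emph{local} uniqueness that theorem supplies to the stated uniqueness among all solutions with $\bar h^s>0$, $\bar\theta^s>0$, using the a priori bounds of Lemmas~\ref{lemma3.1} and~\ref{lemma3.3} together with the uniqueness already established in Theorem~\ref{th3.4}. The preliminary observations are: (i) under $\mathbf{(H_0)}$, $\mathbf{(H_1)}$, $\mathbf{(H_2)}$, $\mathbf{(H_5)}$, Theorem~\ref{th3.4} guarantees that $W^0=(z_1^0,z_2^0,p_1^0,p_2^0,h^0,\theta^0)$ from~\eqref{3.31}--\eqref{3.33} is well defined (with $d^*\in[-1,1]$, $p_1^0,p_2^0<0$, $h^0>0$, $\theta^0>0$) and satisfies $G(W^0,0)=0$; and (ii) $G:(X_1)_{\mathbb C}^2\times\mathbb R^4\times[0,\delta]\to Y_{\mathbb C}^2\times\mathbb R$ is continuously Fréchet differentiable, which is immediate from the explicit form~\eqref{3.27} of $G=(G_1,G_2,G_3)$, the $C^1$ dependence of $(\lambda_1(s),\lambda_2(s),w_1(s),w_2(s))$ on $s$ (Theorem~\ref{thm2.1}), and the polynomial dependence of the remaining terms on $(z_1,z_2,p_1,p_2,h,\theta)$.

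The heart of the argument — and the step I expect to be the main obstacle — is to show that the partial Fréchet derivative $\mathcal T:=D_{(z_1,z_2,p_1,p_2,h,\theta)}G(W^0,0)$ is a bounded linear bijection from $(X_1)_{\mathbb C}^2\times\mathbb R^4$ onto $Y_{\mathbb C}^2\times\mathbb R$. I would organize this along the splitting $Y_{\mathbb C}^2=\mathrm{span}_{\mathbb C}\{\Phi_*,\Psi_*\}\oplus(Y_1)_{\mathbb C}^2$. Differentiating $G_1,G_2$ at $s=0$ in the $(z_1,z_2)$ direction produces exactly $\mathcal L_{\lambda_*}$ applied to the increment; since $\mathcal L_{\lambda_*}$ is Fredholm of index zero with $\mathcal N(\mathcal L_{\lambda_*})=\mathrm{span}\{\Phi_*,\Psi_*\}$ and range $(Y_1)_{\mathbb C}^2$, it maps $(X_1)_{\mathbb C}^2$ isomorphically onto $(Y_1)_{\mathbb C}^2$, which disposes of the $(Y_1)_{\mathbb C}^2$-component of any prescribed datum. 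Pairing the $G_1$- and $G_2$-equations with $\phi_*$ and $\psi_*$ annihilates all $\mathcal L_{\lambda_*}$-contributions (self-adjointness of $\triangle+\lambda_i^*r_i$, whose kernels are $\phi_*$, $\psi_*$), and reduces the rest of $\mathcal T$ to a finite-dimensional linear map in $(\tilde p_1,\tilde p_2,\tilde h,\tilde\theta)$, namely the linearization at $W^0$ of~\eqref{3.36} together with that of the normalization $p_1^2+p_2^2=1$. One must then check that this finite system is nonsingular: this is the infinitesimal counterpart of the (hypothesis-dependent) unique solvability of~\eqref{3.35}--\eqref{3.37} carried out in Theorem~\ref{th3.4}, and it is precisely the place where $\mathbf{(H_2)}$, $\mathbf{(H_5)}$ and $(d_1,d_2)\in D_3^1$ — equivalently the inequalities packaged into $W^0$, in particular $d^*\in[-1,1]$, $p_1^0,p_2^0<0$, $h^0,\theta^0>0$ — are used; the computation follows the pattern of the bijectivity verification in~\cite{LJ2024}. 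Granting $\mathcal T$ bijective, the implicit function theorem yields $\bar\delta\in(0,\delta)$ and a $C^1$ map $W:[0,\bar\delta]\to(X_1)_{\mathbb C}^2\times\mathbb R^4$ with $W(0)=W^0$ and $G(W(s),s)=0$, together with a neighborhood $\mathcal U$ of $(W^0,0)$ in which $(W(s),s)$ are the only zeros of $G$.

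For the uniqueness claim, suppose $\bar W(s)=(\bar z_1^s,\bar z_2^s,\bar p_1^s,\bar p_2^s,\bar h^s,\bar\theta^s)$ solves $G(\bar W(s),s)=0$ with $\bar h^s>0$, $\bar\theta^s\in(0,2\pi)$, and assume for contradiction that $\bar W(s_n)\ne W(s_n)$ for some $s_n\downarrow 0$. The function $\bar\varphi^{s_n}:=\Phi_*+(\bar p_1^{s_n}+i\bar p_2^{s_n})\Psi_*+s_n\bar z^{s_n}$ solves~\eqref{3.24} with $\nu=s_n\bar h^{s_n}$, so Lemma~\ref{lemma3.3} bounds $\bar h^{s_n}=\nu/s_n$, the equation $G_3=0$ bounds $\bar p_i^{s_n}$ and $s_n\|\bar z^{s_n}\|_{Y_{\mathbb C}}$, and Lemma~\ref{lemma3.1} together with elliptic regularity applied to~\eqref{3.19} bounds $\bar\varphi^{s_n}$ in $H^2$. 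Hence, after passing to a subsequence, $\bar W(s_n)$ converges in $(X_1)_{\mathbb C}^2\times\mathbb R^4$ to some $\bar W^0$ with $G(\bar W^0,0)=0$, $\bar h^0\ge0$, $\bar\theta^0\in[0,2\pi]$; discarding the degenerate limits $\bar h^0=0$ and $\bar\theta^0\in\{0,2\pi\}$ (which~\eqref{3.37} excludes under the standing hypotheses), the uniqueness in Theorem~\ref{th3.4} forces $\bar W^0=W^0$. Since $W(s_n)\to W^0$ as well, both $(\bar W(s_n),s_n)$ and $(W(s_n),s_n)$ eventually lie in $\mathcal U$, contradicting the local uniqueness from the previous step. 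Therefore, after shrinking $\bar\delta$ if necessary, $\bar W(s)=W(s)$ for all $s\in[0,\bar\delta]$, completing the proof. The compactness step here is routine once Lemmas~\ref{lemma3.1} and~\ref{lemma3.3} are available; the real work remains the bijectivity of $\mathcal T$ in the second paragraph.
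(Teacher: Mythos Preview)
Your overall strategy coincides with the paper's: verify that the partial Fr\'echet derivative $T:=D_{(z_1,z_2,p_1,p_2,h,\theta)}G(W^0,0)$ is a bijection, invoke the implicit function theorem, and then upgrade local to global uniqueness by a compactness argument feeding into Theorem~\ref{th3.4}. Your description of the bijectivity check (split along $\mathcal L_{\lambda_*}$, pair with $\phi_*,\psi_*$ to reduce to a finite linear system) is exactly what the paper does; it simply writes out the computations~\eqref{3.38}--\eqref{3.45} explicitly rather than appealing to the abstract structure.

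There is, however, a genuine gap in your compactness step. From $G_3=0$ and your $H^2$-bound on $\bar\varphi^{s_n}$ you control only $s_n\bar z^{s_n}$, because $\bar\varphi^{s_n}=\Phi_*+(\bar p_1^{s_n}+i\bar p_2^{s_n})\Psi_*+s_n\bar z^{s_n}$; this says nothing about $\bar z^{s_n}$ itself as $s_n\to 0$, yet it is $\bar z^{s_n}$ that enters $\bar W(s_n)$. Without a uniform bound on $\bar z^{s_n}$ you cannot extract a convergent subsequence of $\bar W(s_n)$ in $(X_1)_{\mathbb C}^2\times\mathbb R^4$. The paper obtains this bound not from~\eqref{3.19} but by pairing the equations $G_1=0$, $G_2=0$ directly with $\bar z_1^s$, $\bar z_2^s$ and invoking Lemma~\ref{lemma3.2} --- the spectral gap $\langle-(\triangle+\lambda_i^*r_i)z,z\rangle\ge\lambda_{**}\|z\|^2$ on the orthogonal complement of $\phi_*$ (resp.\ $\psi_*$) --- to get $s$-uniform $Y_{\mathbb C}$-bounds on $\bar z_i^s$; these are then upgraded to $X_{\mathbb C}$-bounds via $\mathcal L_{\lambda_*}^{-1}$. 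You never invoke Lemma~\ref{lemma3.2}, and it is precisely the missing ingredient. A smaller point in the same passage: Lemma~\ref{lemma3.1} is stated for $\mathrm{Re}\,\mu>0$ strictly, while here $\mu=i\nu$; the estimate still holds with $\mathrm{Re}\,\mu\ge 0$, but you should flag the adjustment rather than cite the lemma as is.
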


\begin{proof}
    Define $T=(T_1, T_2, T_3): (X_1)_{\mathbb C}^2\times \mathbb R^4\to Y_{\mathbb C}^2\times \mathbb R$, which is the Fr\'{e}chet derivative of $G(z_1, z_2, p_1, p_2, h, \theta, s)$ with respect to $(z_1, z_2, p_1, p_2, h, \theta)$ evaluated at $W^0$:
    \[
    T:=D_{(z_1, z_2, p_1, p_2, h, \theta)}G(W^0, 0).
    \]
    A direct calculation gives    
    \begin{equation}\label{3.38}
    \begin{cases}
        T_1(\chi_1, \chi_2, \varrho_1, \varrho_2, \epsilon, \vartheta)
        =& (\triangle+\lambda_1^*r_1(x))\chi_1 \\
        &+(\varrho_1+i\varrho_2)a_{12}\lambda_1^*\cos{\omega}\phi_*\psi_* 
        - i\epsilon\phi_* 
        - id_1\cos{\omega}\nabla\cdot(\phi_*\nabla\phi_*)e^{-i\theta^0}\vartheta, \\
        T_2(\chi_1, \chi_2, \varrho_1, \varrho_2, \epsilon, \vartheta)
        =& (\triangle+\lambda_2^*r_2(x))\chi_2+(\varrho_1+i\varrho_2)d_2\sin{\omega}\nabla\cdot(\psi_*\nabla\psi_*(1+e^{-i\theta^0}) \\
        &+ (\varrho_1+i\varrho_2)\psi_*[\lambda_2'(0)r_2(x)-\lambda_2^*(a_{21}\cos{\omega}\phi_*+2a_{22}\sin{\omega}\psi_*)-ih^0] \\
        &- (p_1^0+ip_2^0)i\epsilon\psi_*
        -id_2(p_1^0+ip_2^0)\sin{\omega}\nabla\cdot(\psi_*\nabla\psi_*)e^{-i\theta^0}\vartheta, \\
        T_3(\chi_1, \chi_2, \varrho_1, \varrho_2, \epsilon, \vartheta)
        =& 2(p_1^0\varrho_1+ip_2^0\varrho_2)\|\psi\|_{Y_{\mathbb C}}^2.
    \end{cases}
	\end{equation}
	One can check that $T$ is bijective from $(X_1)_{\mathbb C}^2\times \mathbb R^4$ to $Y_{\mathbb C}^2\times \mathbb R$. Firstly, $L$ is an injection. If $L(\chi_1, \chi_2, \varrho_1, \varrho_2, \epsilon, \vartheta)=0$, it follows from the third equation of \eqref{3.38} that $\varrho_1=\varrho_2=0$. Substituting $\varrho_1=\varrho_2=0$ into \eqref{3.38}, we have
	\begin{equation}\label{3.39}
	\begin{cases}
		(\triangle+\lambda_1^*r_1(x))\chi_1-i\epsilon \phi_*-id_1\cos{\omega}\nabla\cdot(\phi_*\nabla\phi_*)e^{-i\theta^0}\vartheta=0, \\
		(\triangle+\lambda_2^*r_2(x))\chi_2-(p_1^0+ip_2^0)i\epsilon\psi_*-id_2(p_1^0+ip_2^0)\sin{\omega}\nabla\cdot(\psi_*\nabla\psi_*)e^{-i\theta^0}\vartheta =0.
	\end{cases}
	\end{equation}
	Taking the inner products of the first equation of \eqref{3.39} with $\phi_*$ gives that
	\begin{equation}\label{3.40}
		i\epsilon +id_1\kappa_1\cos{\omega}e^{-i\theta^0}\vartheta=0,
	\end{equation}
	separating the real and imaginary parts yields
	\begin{equation}\label{3.41}
	\begin{cases}
		\epsilon+d_1\kappa_1\cos{\omega}\cos{\theta^0}\vartheta=0, \\
		d_1\kappa_1\cos{\omega}\sin{\theta^0}\vartheta=0,
	\end{cases}
	\end{equation}
	which implies that $\vartheta=0$ and $\epsilon=0$. Since 
	$\varrho_1=\varrho_2=0$, $\vartheta=0$ and $\epsilon=0$, it can be inferred that $\chi_1=\chi_2=0$. Hence, $T$ is injective.
	
	Then, we show that $T$ is surjective. Suppose that for any $(\xi, \rho)\in Y_{\mathbb C}^2\times \mathbb C$, where $\rho=\rho_R+i\rho_I \in\mathbb C$, $\xi=(\xi_1, \xi_2)^T\in Y_{\mathbb C}^2$, let $\xi_1=\xi_1^1+\xi_1^2$ and $\xi_2=\xi_2^1+\xi_2^2$ for $(\xi_1^1, \xi_2^1)\in \mathcal N(\mathcal L_{\lambda_*})$ and $(\xi_1^2, \xi_2^2)\in (Y_1)_{\mathbb C}$ from the space decomposition $Y_{\mathbb C}^2=\mathcal N(\mathcal L_{\lambda_*})\oplus (Y_1)_{\mathbb C}$, there is 
	\begin{equation}\label{3.42}
		T(\chi_1, \chi_2, \varrho_1, \varrho_2, \epsilon, \vartheta)=(\xi, \rho)
	\end{equation}
	It follows from the third equation of \eqref{3.42} that
	\begin{equation}\label{3.43}
		\varrho_1=\dfrac{\rho_R}{2p_1^0}, \quad
		\varrho_2=\dfrac{\rho_I}{2p_2^0}.
	\end{equation}
	Calculating the inner products of the first and the second equation of \eqref{3.42} with $\phi_*$ and $\psi_*$, respectively, we have
	\begin{equation}\label{3.44}
		\begin{cases}
			(\varrho_1+i\varrho_2)K_4-i\epsilon-id_1\kappa_1e^{-i\theta^0}\vartheta=\langle \phi_*, \xi_1^1 \rangle, \\
			(\varrho_1+i\varrho_2)(d_2\kappa_2e^{-i\theta}-K_2-ih^0)-(p_1^0+ip_2^0)(i\epsilon+id_2\kappa_2e^{-i\theta^0}\vartheta)=\langle \psi_*, \xi_2^1 \rangle,
		\end{cases}
	\end{equation}
	separating the real and imaginary parts of the first equation of \eqref{3.44}, and then combining \eqref{3.43}, gives that
	\begin{equation}\label{3.45}
		\begin{cases}
			\dfrac{\rho_R}{2p_1^0}K_4-d_1\kappa_1\sin{\theta^0}\vartheta=\langle \phi_*, \xi_1^1 \rangle, \\
			\dfrac{\rho_I}{2p_2^0}K_4-\epsilon- d_1\kappa_1\cos{\theta^0}\vartheta=0.
		\end{cases}
	\end{equation}
	Then, $\vartheta$ and $\epsilon$ can be solved from the \eqref{3.45}. Finally, we put all values $\varrho_1$, $\varrho_2$, $\epsilon$ and $\vartheta$ into $T_1=\xi_1$ and $T_2=\xi_2$, $\chi_1$ and $\chi_2$ can be uniquely solved. Thus, $T$ is a surjection.
	
	From the implicit function theorem, there exists a constant $\bar{\delta}\in(0,\delta)$ and a continuous differentiable mapping $W(s): [0,\bar{\delta}]\to (X_1)_{\mathbb C}^2\times \mathbb R^4$, such that $G(W(s), s)=0$ with $W(0)=W^0$, defined in \eqref{3.31}. 
	
	Next, we prove the uniqueness of the solution $W(s)$. Suppose that there exists another mapping $\Bar{W}(s): [0,\Bar{\delta}]\to (X_1)_{\mathbb C}^2\times \mathbb R^4$ with $\Bar{W}(s)=(\Bar{z}_1^s, \Bar{z}_2^s, \Bar{p}_1^s, \Bar{p}_2^s, \Bar{h}^s, \Bar{\theta}^s)$ such that $G(\Bar{W}(s), s)=0$, then $\Bar{W}(s)\to W^0$ as $s\to 0$. We see that $\{ \Bar{h}^s \}$ is bounded as $\Bar{h}^s=\dfrac{\Bar{\nu}^s}{s}$ from Lemma \ref{lemma3.3}. And $\{ \Bar{\theta}^s \}$, $\{ \Bar{p}_1^s \}$, $\{ \Bar{p}_2^s \}$ are also bounded according to \eqref{3.27}. Then, we show the boundedness of $\Bar{z}_1^s$ and $\Bar{z}_2^s$. Calculating the inner products of the first and second equations of \eqref{3.27} with $\Bar{z}_1^s$ and $\Bar{z}_2^s$, respectively, we have 
	\begin{align}
		\|\nabla\Bar{z}_1^s\|_{Y_{\mathbb C}}^2 \le &
		\lambda_1r_1(x)\|\Bar{z}_1^s\|_{Y_{\mathbb C}}^2
		+|d_1|M_4s\|\nabla \Bar{z}_1^s\|_{Y_{\mathbb C}}^2e^{-i\theta}
		+|d_1|\|\phi_*\|_{Y_{\mathbb C}}\|\triangle (\cos{\omega}\phi_*+w_1(s))\|_{\infty}\|\Bar{z}_1^s\|_{Y_{\mathbb C}} \nonumber\\
		& + |d_1|(1+e^{-i\theta})\|\phi_*\|_{Y_{\mathbb C}}\|\nabla (\cos{\omega}\phi_*+w_1(s))\|_{\infty}\|\Bar{z}_1^s\|_{Y_{\mathbb C}}
		+|d_1|M_4\|\triangle\phi_*\|_{Y_{\mathbb C}}\|\Bar{z}_1^s\|_{Y_{\mathbb C}} \nonumber\\
		& + \dfrac{1}{2}|d_1|\|\triangle (\cos{\omega}\phi_*+w_1(s)) \|_{\infty}\|\Bar{z}_1^s\|_{Y_{\mathbb C}}^2
		+\lambda_1^*sM_1M_4(\|\Bar{z}_1^s\|_{Y_{\mathbb C}}^2+\|\Bar{z}_1^s\|_{Y_{\mathbb C}}\|\Bar{z}_2^s\|_{Y_{\mathbb C}}) \nonumber\\
		\le & |d_1|M_4s\|\nabla \Bar{z}_1^s\|_{Y_{\mathbb C}}^2
		+ M_6\|\Bar{z}_1^s\|_{Y_{\mathbb C}}^2 
		+ M_8\|\Bar{z}_1^s\|_{Y_{\mathbb C}} 
		+ \lambda_1^*sM_1M_4 \|\Bar{z}_1^s\|_{Y_{\mathbb C}}\|\Bar{z}_2^s\|_{Y_{\mathbb C}}, \nonumber\\
		\|\nabla\Bar{z}_2^s\|_{Y_{\mathbb C}}^2 \le &
		\lambda_2r_2(x)\|\Bar{z}_2^s\|_{Y_{\mathbb C}}^2
		+|d_2|M_5s\|\nabla \Bar{z}_2^s\|_{Y_{\mathbb C}}^2e^{-i\theta}
		+|d_2|\|\psi_*\|_{Y_{\mathbb C}}\|\triangle (\sin{\omega}\psi_*+w_2(s))\|_{\infty}\|\Bar{z}_2^s\|_{Y_{\mathbb C}} \nonumber\\
		& + |d_2|(1+e^{-i\theta})\|\psi_*\|_{Y_{\mathbb C}}\|\nabla (\sin{\omega}\psi_*+w_2(s))\|_{\infty}\|\Bar{z}_2^s\|_{Y_{\mathbb C}}
		+|d_2|M_5\|\triangle\psi_*\|_{Y_{\mathbb C}}\|\Bar{z}_2^s\|_{Y_{\mathbb C}} \nonumber\\
		& + \dfrac{1}{2}|d_2|\|\triangle (\sin{\omega}\psi_*+w_2(s)) \|_{\infty}\|\Bar{z}_2^s\|_{Y_{\mathbb C}}^2
		+\lambda_2^*sM_1M_5(\|\Bar{z}_2^s\|_{Y_{\mathbb C}}^2+\|\Bar{z}_1^s\|_{Y_{\mathbb C}}\|\Bar{z}_2^s\|_{Y_{\mathbb C}}) \nonumber\\
		\le & |d_2|M_5s\|\nabla \Bar{z}_2^s\|_{Y_{\mathbb C}}^2
		+ M_7 \|\Bar{z}_1^s\|_{Y_{\mathbb C}}^2
		+ M_9 \|\Bar{z}_2^s\|_{Y_{\mathbb C}} 
		+ \lambda_2^*sM_1M_5 \|\Bar{z}_1^s\|_{Y_{\mathbb C}}\|\Bar{z}_2^s\|_{Y_{\mathbb C}},
	\end{align}
	where $M_1$, $M_4$, $M_5$ are defined in Lemma \ref{lemma3.1} and Lemma \ref{lemma3.3}, and 
	\begin{align*}
		M_6 = & \|\lambda_1r_1(x)\|_{\infty}+\dfrac{1}{2}|d_1|\|\triangle (\cos{\omega}\phi_*+w_1(s)) \|_{\infty}+\lambda_1^*sM_1M_4, \\
		M_7 = & \|\lambda_2r_2(x)\|_{\infty}+\dfrac{1}{2}|d_2|\|\triangle (\sin{\omega}\psi_*+w_2(s)) \|_{\infty}+\lambda_2^*sM_1M_5, \\
		M_8 = & |d_1|\|\phi_*\|_{Y_{\mathbb C}}\|\triangle (\cos{\omega}\phi_*+w_1(s))\|_{\infty}+2|d_1|\|\phi_*\|_{Y_{\mathbb C}}\|\nabla (\cos{\omega}\phi_*+w_1(s))\|_{\infty} + |d_1|M_4\|\triangle\phi_*\|_{Y_{\mathbb C}}, \\
		M_9 = & |d_2|\|\psi_*\|_{Y_{\mathbb C}}\|\triangle (\sin{\omega}\psi_*+w_2(s))\|_{\infty}+2|d_2|\|\psi_*\|_{Y_{\mathbb C}}\|\nabla (\sin{\omega}\psi_*+w_2(s))\|_{\infty} + |d_2|M_5\|\triangle\psi_*\|_{Y_{\mathbb C}},
	\end{align*}
	Hence, 
	\begin{align*}
		\|\nabla\Bar{z}_1^s\|_{Y_{\mathbb C}}^2 \le &
		\dfrac{ M_6\|\Bar{z}_1^s\|_{Y_{\mathbb C}}^2 
		+ M_8\|\Bar{z}_1^s\|_{Y_{\mathbb C}} 
		+ \lambda_1^*sM_1M_4 \|\Bar{z}_1^s\|_{Y_{\mathbb C}}\|\Bar{z}_2^s\|_{Y_{\mathbb C}}}{1-|d_1|M_4s}, \\
		\|\nabla\Bar{z}_2^s\|_{Y_{\mathbb C}}^2 \le &
		\dfrac{ M_7\|\Bar{z}_2^s\|_{Y_{\mathbb C}}^2 
		+ M_9\|\Bar{z}_2^s\|_{Y_{\mathbb C}} 
		+ \lambda_2^*sM_1M_5 \|\Bar{z}_1^s\|_{Y_{\mathbb C}}\|\Bar{z}_2^s\|_{Y_{\mathbb C}}}{1-|d_2|M_5s}.
	\end{align*}
	Moreover, let $\lambda^{**}$ is the second eigenvalue of operator $-\mathcal L_{\lambda}$ it follows from Lemma \ref{lemma3.2} that
	\begin{align}
		\lambda_{**}\|\Bar{z}_1^s\|_{Y_{\mathbb C}}^2 \le \dfrac{M_6}{1-|d_1|M_4s}\|\Bar{z}_1^s\|_{Y_{\mathbb C}}^2
		+\dfrac{ M_8\|\Bar{z}_1^s\|_{Y_{\mathbb C}} 
		+ \lambda_1^*sM_1M_4 \|\Bar{z}_1^s\|_{Y_{\mathbb C}}\|\Bar{z}_2^s\|_{Y_{\mathbb C}}}{1-|d_1|M_4s}, \\
		\lambda_{**}\|\Bar{z}_2^s\|_{Y_{\mathbb C}}^2 \le \dfrac{M_7}{1-|d_2|M_5s}\|\Bar{z}_2^s\|_{Y_{\mathbb C}}^2
		+\dfrac{ M_9\|\Bar{z}_2^s\|_{Y_{\mathbb C}} 
		+ \lambda_1^*sM_1M_5 \|\Bar{z}_1^s\|_{Y_{\mathbb C}}\|\Bar{z}_2^s\|_{Y_{\mathbb C}}}{1-|d_2|M_5s}, 
	\end{align} 
	which implies that
	\[
	\|\Bar{z}_1^s\|_{Y_{\mathbb C}}^2 \le
	\dfrac{M_8\|\Bar{z}_1^s\|_{Y_{\mathbb C}} 
		+ \lambda_1^*sM_1M_4 \|\Bar{z}_1^s\|_{Y_{\mathbb C}}\|\Bar{z}_2^s\|_{Y_{\mathbb C}}}{\lambda_{**}(1-|d_1|M_4s)-M_6},
	\]
	and 
	\[
	\|\Bar{z}_2^s\|_{Y_{\mathbb C}}^2 \le
	\dfrac{M_9\|\Bar{z}_2^s\|_{Y_{\mathbb C}} 
		+ \lambda_2^*sM_1M_5 \|\Bar{z}_1^s\|_{Y_{\mathbb C}}\|\Bar{z}_2^s\|_{Y_{\mathbb C}}}{\lambda_{**}(1-|d_2|M_5s)-M_7}.
	\]
	Thus, $\{ \Bar{z}_1^s \}$ and $\{ \Bar{z}_2^s \}$ are bounded in $Y_{\mathbb C}$ for $s\in[0, \Bar{\delta}]$. Since $\mathcal L_{\lambda}$ has a bounded inverse $\mathcal L_{\lambda}^{-1}: (Y_1)_{\mathbb C}^2\to (X_1)_{\mathbb C}^2$, we find that $\{ \Bar{z}_1^s \}$ and $\{ \Bar{z}_2^s \}$ are also bounded in $X_{\mathbb C}$ by $\mathcal L_{\lambda}^{-1}G(\Bar{z}_1^s, \Bar{z}_2^s, \Bar{p}_1^s, \Bar{p}_2^s, \Bar{h}^s, \Bar{\theta}^s)=0$. Hence, $\{ \Bar{W}(s) \}$ is precompact in $Y_{\mathbb C}^2\times \mathbb R^4$ follows from the embedding theorem, which means there exists a subsequence $\{ \Bar{W}(s_n):= (\bar{z}_1^{s_n}, \bar{z}_2^{s_n}, \bar{p}_1^{s_n}, \bar{p}_2^{s_n}, \bar{h}^{s_n}, \bar{\theta}^{s_n}) \}$ such that 
	\[
	\lim\limits_{i\to\infty}s_n=0,
	\]
	and 
	\[
	\lim\limits_{i\to\infty} \Bar{W}(s_n)=\Bar{W}(0) \ \text{in} \  X_{\mathbb C}^2\times \mathbb R^4.
	\]
	Taking the limits of $\mathcal L_{\lambda}^{-1}(G_1(\Bar{W}(s_n)), G_2(\Bar{W}(s_n)))^T=(0, 0)^T$ as $i\to \infty$, we have $G(\Bar{W}(0), 0)=0$. Then, $\Bar{w}(0)=W^0$ since $G(z_1, z_2, p_1, p_2, h, \theta)=0$ has a unique solution $W^0$ defined in \eqref{3.31} by Thoerem \ref{th3.4}. The proof is completed. 
\end{proof}

Consequently, the following results could be obtained directly by Theorem \ref{thm7}:    

\begin{theorem}\label{thm7}
    Under the assumption $\mathbf{(H_{0})}$, $\mathbf{(H_{1})}$, $\mathbf{(H_{2})}$ and $\mathbf{(H_5)}$, for $s\in(0,\Bar{\delta})$, the eigenvalue problem
    \[
    \Lambda(\nu_n, s, \tau)\varphi=0, \quad \tau>0, \quad \varphi=(\varphi_1, \varphi_2)\in X_{\mathbb C}^2\backslash\{(0,0)\}, 
    \]
    has solution $(\nu, s, \varphi)$, if and only if 
    \begin{align}\label{3.49}
    	 &\nu=\nu^s:=sh^s, \quad \tau=\tau_n:=\dfrac{\theta^s+2n\pi}{\nu^s}, \ n=0, 1, 2, \dots \\
    	 &\varphi=c\varphi^s=c(\varphi_1^s, \varphi_2^s)^T=c\left(\begin{array}{cc}
            	\phi_*+sz_1^s \nonumber\\
            	(p_1^s+ip_2^s)\psi_*+sz_2^s
            	\end{array}\right),
    \end{align}
    where $c$ is a non-zero constant.
\end{theorem}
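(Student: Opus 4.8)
The plan is to read the statement off from Theorem~\ref{th3.5}, once the purely imaginary spectrum of $A_{\tau}(s)$ has been re-expressed through \eqref{3.24}. Recall that $i\nu$ with $\nu>0$ is an eigenvalue of $A_{\tau}(s)$ with eigenfunction $\varphi\in X_{\mathbb C}^2\backslash\{(0,0)\}$ exactly when $A(s)\varphi+B(s)\varphi\,e^{-i\nu\tau}-i\nu\varphi=0$; on setting $\theta:=\nu\tau$ and reducing it modulo $2\pi$ to $\bar\theta\in[0,2\pi)$, this is the same as requiring that $(\nu,s,\bar\theta,\varphi)$ solve \eqref{3.24}, and conversely each solution $(\nu,s,\bar\theta,\varphi)$ of \eqref{3.24} with $\nu>0$ furnishes precisely the admissible delays $\tau=(\bar\theta+2n\pi)/\nu$, $n=0,1,2,\dots$. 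Hence it is enough to prove that, for $s\in(0,\bar\delta)$, equation \eqref{3.24} with $\nu>0$ and $\varphi\neq(0,0)$ has, up to a nonzero complex factor, exactly the solution $(\nu^s,s,\theta^s,\varphi^s)$ attached to the map $W(s)=(z_1^s,z_2^s,p_1^s,p_2^s,h^s,\theta^s)$ of Theorem~\ref{th3.5}, where $\nu^s=sh^s$ and $\varphi^s=\Phi_*+(p_1^s+ip_2^s)\Psi_*+s(z_1^s,z_2^s)^T$.

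For the \emph{sufficiency}, take $W(s)$ as in Theorem~\ref{th3.5}, so $G(W(s),s)=0$. By the construction of $G$ in \eqref{3.27}---obtained by substituting \eqref{2.6} and \eqref{3.26} into \eqref{3.24}, invoking the eigenvalue relations $\triangle\phi_*+\lambda_1^*r_1(x)\phi_*=0$ and $\triangle\psi_*+\lambda_2^*r_2(x)\psi_*=0$ with $\lambda_{1s}=\lambda_1^*+O(s)$, $\lambda_{2s}=\lambda_2^*+O(s)$, and dividing through by $s$---the identities $G_1(W(s),s)=G_2(W(s),s)=0$ are equivalent to $A(s)\varphi^s+B(s)\varphi^s e^{-i\theta^s}-i\nu^s\varphi^s=0$; moreover $\varphi^s\neq(0,0)$ since the $\phi_*$-component of its first entry $\phi_*+sz_1^s$ equals $\|\phi_*\|_{Y_{\mathbb C}}^2=1$. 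Because $e^{-i\nu^s\tau_n}=e^{-i(\theta^s+2n\pi)}=e^{-i\theta^s}$, it follows that $A(s)(c\varphi^s)+B(s)(c\varphi^s)e^{-i\nu^s\tau_n}-i\nu^s(c\varphi^s)=0$ for every integer $n\ge0$ and every $c\neq0$, which is exactly the family of eigensolutions in \eqref{3.49}.

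For the \emph{necessity}, let $(\nu,s,\tau,\varphi)$ solve the eigenvalue problem with $\nu>0$, $\tau>0$, $\varphi\neq(0,0)$, and put $\bar\theta=\nu\tau\bmod 2\pi$, so $(\nu,s,\bar\theta,\varphi)$ solves \eqref{3.24}. The first step is to normalize $\varphi$ into the form \eqref{3.26}: one shows $\langle\phi_*,\varphi_1\rangle\neq0$ for $s$ small and, after rescaling, writes $\varphi=\Phi_*+(p_1+ip_2)\Psi_*+sz$ with $z=(z_1,z_2)^T\in(X_1)_{\mathbb C}^2$, the nonvanishing and the boundedness of $z$ in $X_{\mathbb C}^2$ being obtained---as in the proof of Theorem~\ref{th3.5}---from Lemma~\ref{lemma3.3} (bounding $\nu/s$), the elliptic estimates of Lemma~\ref{lemma3.1}, and a compactness argument along $s\to0$ combined with the uniqueness at $s=0$ from Theorem~\ref{th3.4}. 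This exhibits $(z_1,z_2,p_1,p_2,h,\bar\theta)$, with $h=\nu/s>0$, as a solution of $G(\cdot,s)=0$. Next one excludes $\bar\theta=0$: separating imaginary parts in the projection of \eqref{3.24} onto $\mathcal N(\mathcal L_{\lambda_*})$ (the $s>0$ counterpart of \eqref{3.37}) shows that $\bar\theta=0$ would force $h\to0$ as $s\to0$, and the same limiting argument then produces a solution of $G(\cdot,0)=0$ with $\theta=0$ and $h=0$, contradicting the uniqueness in Theorem~\ref{th3.4} (whose solution has $\theta^0>0$, $h^0>0$). With $h>0$ and $\bar\theta>0$, the uniqueness part of Theorem~\ref{th3.5} forces $(z_1,z_2,p_1,p_2,h,\bar\theta)=W(s)$; hence $\nu=sh^s=\nu^s$, $\bar\theta=\theta^s$, $\varphi$ is a nonzero scalar multiple of $\varphi^s$, and $\tau=(\theta^s+2n\pi)/\nu^s=\tau_n$ for the unique integer $n\ge0$ with $\nu\tau=\theta^s+2n\pi$.

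I expect the only genuine difficulty to be in the necessity direction: all the analytic substance is already packaged in Theorems~\ref{th3.4} and~\ref{th3.5}, and what is left is the bookkeeping that every purely imaginary eigenvalue fits the normalized ansatz \eqref{3.26} and that the phase $\bar\theta=0$ never arises---both handled by pairing the a priori bounds of Lemmas~\ref{lemma3.1}--\ref{lemma3.3} with the uniqueness statements of Theorems~\ref{th3.4}--\ref{th3.5} via a compactness argument as $s\to0$. The sufficiency is, by contrast, just an unwinding of $G(W(s),s)=0$ together with $e^{-i\nu^s\tau_n}=e^{-i\theta^s}$.
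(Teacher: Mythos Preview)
Your proposal is correct and follows the paper's intended route: the paper gives no separate proof of this theorem, simply noting that it ``could be obtained directly'' from Theorem~\ref{th3.5} (the self-reference to Theorem~\ref{thm7} in the paper is evidently a typo for Theorem~\ref{th3.5}). You have merely written out the bookkeeping---the reduction of purely imaginary eigenvalues to \eqref{3.24}, the normalization into the form \eqref{3.26}, and the appeal to the existence/uniqueness in Theorems~\ref{th3.4}--\ref{th3.5}---that the paper leaves implicit.
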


In the following, we show that the eigenvalues of \eqref{3.15} will pass through the imaginary axis from $i\nu_n$. The following estimate is given first.
\begin{lemma}\label{lemma3.7}
    Under the assumption $\mathbf{(H_{0})}$, $\mathbf{(H_{1})}$, $\mathbf{(H_{2})}$ and $\mathbf{(H_5)}$, we have
    \begin{equation}\label{3.50}
        S_n(s):=\int_{\Omega} (\varphi_1^s)^2+(\varphi_2^s)^2\mathrm{d}x
        +\tau_ne^{-i\theta^s}\left(d_1\int_{\Omega}\varphi_1^s\nabla\cdot(u_s\nabla\varphi_1^s)\mathrm{d}x
        +d_2\int_{\Omega}\varphi_2^s\nabla\cdot(v_s\nabla\varphi_2^s)\mathrm{d}x\right) \neq 0,
    \end{equation}
    for $s\in(0, \Bar{\delta})$ and $n=0, 1, 2, \dots$, where $u_s$, $v_s$ are defined as in \eqref{2.6}.
\end{lemma}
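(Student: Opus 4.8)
The plan is to show that, for each $n$, the map $s\mapsto S_n(s)$ extends continuously to $[0,\bar\delta)$ and can be written as $S_n(s)=\dfrac{I_0(s)}{R(s)}\big(R(s)-(\theta^s+2n\pi)\big)$ for an explicit complex number $R(s)$, and then to check $\mathrm{Im}\,R(0)>0$; since $\theta^s+2n\pi$ is real, this rules out $S_n(s)=0$ for small $s$ uniformly in $n$. The first step is to remove the apparent $1/s$ singularity: by \eqref{3.49} one has $\tau_n=(\theta^s+2n\pi)/(sh^s)$, and by \eqref{2.6} $u_s=s(\cos\omega\,\phi_*+w_1(s))$, $v_s=s(\sin\omega\,\psi_*+w_2(s))$, so the factor $s$ cancels and
\[
S_n(s)=I_0(s)+\frac{(\theta^s+2n\pi)\,e^{-i\theta^s}}{h^s}\,\Xi(s),
\]
where $I_0(s):=\int_\Omega(\varphi_1^s)^2+(\varphi_2^s)^2\,\mathrm{d}x$ and $\Xi(s):=d_1\int_\Omega\varphi_1^s\nabla\!\cdot\!\big((\cos\omega\,\phi_*+w_1(s))\nabla\varphi_1^s\big)\mathrm{d}x+d_2\int_\Omega\varphi_2^s\nabla\!\cdot\!\big((\sin\omega\,\psi_*+w_2(s))\nabla\varphi_2^s\big)\mathrm{d}x$. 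By Theorem \ref{th3.5} and Theorem \ref{thm2.1}, $s\mapsto(z_1^s,z_2^s,p_1^s,p_2^s,h^s,\theta^s)$ and $s\mapsto w_i(s)$ are continuous on $[0,\bar\delta)$, hence so are $I_0$ and $\Xi$. Setting $R(s):=-h^sI_0(s)/\big(e^{-i\theta^s}\Xi(s)\big)$, one has $S_n(s)=\frac{I_0(s)}{R(s)}\big(R(s)-(\theta^s+2n\pi)\big)$ wherever $\Xi(s)\neq0$ and $I_0(s)\neq0$, so it suffices to prove $\Xi(s)\neq0$, $I_0(s)\neq0$ and $\mathrm{Im}\,R(s)\neq0$ for $s$ in a right-neighbourhood of $0$, which by continuity reduces to the single point $s=0$.

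The second step is to evaluate the limits at $s=0$. By Theorem \ref{thm7}, $\varphi_1^0=\phi_*$ and $\varphi_2^0=P\psi_*$ with $P:=p_1^0+ip_2^0$; using $\|\phi_*\|_{Y_{\mathbb C}}^2=\|\psi_*\|_{Y_{\mathbb C}}^2=1$ and $(p_1^0)^2+(p_2^0)^2=1$ one gets $I_0(0)=1+P^2=2p_1^0P\neq0$, and, with $\kappa_1,\kappa_2$ as in \eqref{kappa}, $\Xi(0)=d_1\kappa_1+d_2\kappa_2P^2$. Here $\kappa_2<0$ (since $\int_\Omega\psi_*\nabla\!\cdot\!(\psi_*\nabla\psi_*)\mathrm{d}x=-\int_\Omega\psi_*|\nabla\psi_*|^2\mathrm{d}x<0$) and $d_2\neq0$ (if $d_2=0$, \eqref{3.32} would give $h^0=K_3p_2^0<0$, contradicting $h^0>0$), so $\mathrm{Im}\,\Xi(0)=2d_2\kappa_2\,p_1^0p_2^0\neq0$ and in particular $\Xi(0)\neq0$. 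Multiplying the second equality of \eqref{3.36} by $P$ and adding the first yields
\[
e^{-i\theta^0}\Xi(0)=e^{-i\theta^0}\big(d_1\kappa_1+d_2\kappa_2P^2\big)=ih^0(1+P^2)+Q,\qquad Q:=K_1+(K_3+K_4)P+K_2P^2,
\]
with $K_i$ from \eqref{K}, hence $R(0)=-h^0(1+P^2)/\big(ih^0(1+P^2)+Q\big)=-1/(i+\sigma)$ with $\sigma:=Q/\big(h^0(1+P^2)\big)$.

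The third step — the crux — is to fix the sign of $\mathrm{Im}\,\sigma$. From $1+P^2=2p_1^0P$ and $\bar P=P^{-1}$ one computes $\sigma=Q\bar P/(2h^0p_1^0)$ with $Q\bar P=(K_3+K_4)+(K_1+K_2)p_1^0+i(K_2-K_1)p_2^0$, so $\mathrm{Im}\,\sigma=(K_2-K_1)p_2^0/(2h^0p_1^0)$. By $(\mathbf{H_2})$, $K_1K_3<K_2K_4$ and $0<K_4<K_3$ force $K_1K_3<K_2K_3$, i.e. $K_1<K_2$; combined with $p_1^0<0$, $p_2^0<0$ and $h^0>0$ from Theorem \ref{th3.4}, this gives $\mathrm{Im}\,\sigma>0$, whence $\mathrm{Im}(i+\sigma)=1+\mathrm{Im}\,\sigma>0$ and $\mathrm{Im}\,R(0)=(1+\mathrm{Im}\,\sigma)/|i+\sigma|^2>0$; in particular $R(0)\notin\mathbb{R}$ and $I_0(0)\neq0$. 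By continuity there is $\bar\delta'\in(0,\bar\delta)$ with $\Xi(s)\neq0$, $I_0(s)\neq0$ and $\mathrm{Im}\,R(s)\ge\tfrac12\mathrm{Im}\,R(0)>0$ for $s\in[0,\bar\delta')$; then $R(s)-(\theta^s+2n\pi)$ has nonzero imaginary part for every $n$, so $S_n(s)=\frac{I_0(s)}{R(s)}\big(R(s)-(\theta^s+2n\pi)\big)\neq0$ for all $s\in(0,\bar\delta')$ and all $n\ge0$, and relabelling $\bar\delta$ finishes the proof. The hard part is precisely this sign bookkeeping: it is where all three inequalities in $(\mathbf{H_2})$ and the signs of $p_1^0,p_2^0,h^0$ supplied by Theorem \ref{th3.4} genuinely enter, and it is what prevents $R(0)$ (hence $R(s)$ for small $s$) from coinciding with the real number $\theta^s+2n\pi$ — the only mechanism by which some $S_n(s)$ could vanish.
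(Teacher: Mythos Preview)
Your proof is correct and actually a bit cleaner than the paper's. Both arguments pass to the limit $s\to 0$ and decide the question by the sign of an imaginary part, using \eqref{3.36}/\eqref{3.37} together with the signs $p_1^0<0$, $p_2^0<0$, $h^0>0$ from Theorem~\ref{th3.4} and the inequalities in $(\mathbf{H_2})$. The paper works directly with $\mathrm{Im}\,S_n(0)$, substituting \eqref{3.37} to show $\mathrm{Im}\,S_n(0)>0$ for every $n$, and then implicitly invokes continuity. Your introduction of the $n$-independent quantity $R(s)$ and the factorisation $S_n(s)=\dfrac{I_0(s)}{R(s)}\big(R(s)-(\theta^s+2n\pi)\big)$ packages the same computation more efficiently: once $\mathrm{Im}\,R(0)>0$, a single continuity argument gives a neighbourhood of $0$ that works \emph{uniformly} in $n$, which the paper's version leaves tacit. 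Your key identity $e^{-i\theta^0}\Xi(0)=ih^0(1+P^2)+Q$ obtained by multiplying the second line of \eqref{3.36} by $P$ and adding the first is exactly the same algebra the paper performs, just kept in complex form rather than split into real and imaginary parts.

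One small overstatement: you say all three inequalities in $(\mathbf{H_2})$ ``genuinely enter'', but your deduction $K_1<K_2$ uses only $K_1K_3<K_2K_4$ and $K_3>K_4$; the remaining condition $K_1>K_4$ is not invoked in your sign check. This does not affect correctness.
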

\begin{proof}
    Let $s\to 0$, we have
    \[
    \theta^s\to \theta^0, \quad (\varphi_1^s, \varphi_2^s)\to (\phi_*, (p_1^0+ip_2^0)\psi_*),
    \]
    and then
    \begin{align*}
    	S_n(0)=&\lim\limits_{s\to 0}S_n(s)= \int_{\Omega}\phi_*^2+(p_1^0+ip_2^0)^2\psi_*^2\mathrm{d}x
        +\dfrac{\theta^0+2n\pi}{h^0}(\cos{\theta^0-i\sin{\theta^0}})(d_1\kappa_1+d_2\kappa_2),
    \end{align*}
    separating the real and imaginary part of $S_n(0)$ gives that
	\begin{align}\label{3.51}
		\mathrm{Re}(S_n(0))=& \int_{\Omega}\phi_*^2+((p_1^0)^2-(p_2^0)^2)\psi_*^2\mathrm{d}x
		+\dfrac{\theta^0+2n\pi}{h^0}\left(\cos{\theta^0}(d_1\kappa_1+d_2\kappa_2)+2p_1^0p_2^0d_2\kappa_2\sin{\theta^0}\right), \nonumber\\
		\mathrm{Im}(S_n(0))=& 2p_1^0p_2^0\int_{\Omega}\psi_*^2\mathrm{d}x
		+\dfrac{\theta^0+2n\pi}{h^0}\left(2p_1^0p_2^0d_2\kappa_2\cos{\theta^0}-(d_1\kappa_1+d_2\kappa_2)\sin{\theta^0}\right).
     \end{align}
     Combining \eqref{3.51} and \eqref{3.37} yields
     \begin{align*}
     	\mathrm{Im}(S_n(0))=& 2p_1^0p_2^0\int_{\Omega}\psi_*^2\mathrm{d}x
     	+2(\theta^0+2n\pi)
		+\dfrac{\theta^0+2n\pi}{h^0}\left(2p_1^0p_2^0(K_2+K_3p_1^0)+(K_4-K_3)p_2^0\right) \\
		=& 2p_1^0p_2^0+2(\theta^0+2n\pi)
		+\dfrac{\theta^0+2n\pi}{h^0}(p_2^0(K_4-K_3)+2p_1^0p_2^0\dfrac{d_2\kappa_2(K_1K_3-K_2K_4)}{d_1\kappa_1K_3-d_2\kappa_2K_4}).
     \end{align*}
     Under the assumption $\mathbf{(H_{2})}$ (or $\mathbf{(H_{3})}$), it can be seen that
     \[
      p_1^0p_2^0>0, \quad  p_2^0(K_4-K_3)>0, \quad 
      \dfrac{d_2\kappa_2(K_1K_3-K_2K_4)}{d_1\kappa_1K_3-d_2\kappa_2K_4}>0,
      \]
      which means that $\mathrm{Im}(S_n(0))>0$. This proves $S_n(0)\neq 0$.
\end{proof}

Next, we show the transversality conditions to ensure the existence of periodic solutions.
\begin{theorem}\label{thm9}
	 Under the assumption $\mathbf{(H_{0})}$, $\mathbf{(H_{1})}$, $\mathbf{(H_{2})}$ and $\mathbf{(H_5)}$, let $\tau_n$ be defined as in \eqref{3.49}, for any $n=0, 1, 2, \dots$, we have 
	 \begin{enumerate}
	 	\item[(i)] $\mu=i\nu^s$ is a simple eigenvalue of $A_{\tau_n}(s)$;
	 	\item[(ii)] the transversality conditions $\mathrm{Re}\left(\dfrac{\mathrm{d}\mu}{\mathrm{d}\tau_n}(\tau_n)\right)> 0$ holds.
	 \end{enumerate}
\end{theorem}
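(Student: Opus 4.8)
The plan is to verify both items by the now-standard machinery for Hopf bifurcation in memory-based diffusion equations, using the eigenfunction characterization from Theorem~\ref{thm7} and the nonvanishing quantity $S_n(s)$ from Lemma~\ref{lemma3.7}. For part~(i), I would argue that $\mu=i\nu^s$ is geometrically simple first: if $\Lambda(i\nu^s,s,\tau_n)\varphi=0$, then by Theorem~\ref{thm7} the solution set is the one-dimensional span of $\varphi^s=(\phi_*+sz_1^s,\,(p_1^s+ip_2^s)\psi_*+sz_2^s)^T$, so $\dim\ker\Lambda(i\nu^s,s,\tau_n)=1$. To upgrade to algebraic simplicity I would show that $i\nu^s$ is not a root of the characteristic function of multiplicity $\ge 2$, equivalently that there is no generalized eigenfunction; concretely, one seeks $\xi\in X_{\mathbb C}^2$ with $\Lambda(i\nu^s,s,\tau_n)\xi = -\Lambda'_\mu(i\nu^s,s,\tau_n)\varphi^s$ and shows this Fredholm equation is \emph{unsolvable}. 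Pairing the right-hand side against the kernel element $\tilde\varphi^s$ of the \emph{adjoint} operator $\Lambda(i\nu^s,s,\tau_n)^*$ (which by a parallel computation is a one-dimensional span), the solvability condition reads $\langle \tilde\varphi^s,\, \Lambda'_\mu(i\nu^s,s,\tau_n)\varphi^s\rangle = 0$; a direct differentiation gives $\Lambda'_\mu = -I - \tau_n e^{-\mu\tau}B(s)$, so this inner product is, up to a nonzero constant, exactly $S_n(s)$ from \eqref{3.50}. Since Lemma~\ref{lemma3.7} asserts $S_n(s)\ne 0$ for $s\in(0,\bar\delta)$ (via $S_n(0)\ne 0$ and continuity, shrinking $\bar\delta$ if needed), the generalized eigenfunction equation has no solution, so $i\nu^s$ is algebraically simple.

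For part~(ii), I would implicitly differentiate the characteristic equation $\Lambda(\mu(\tau),s,\tau)\varphi(\tau)=0$ with respect to $\tau$ at $\tau=\tau_n$, $\mu=i\nu^s$. Differentiating gives $\Lambda'_\mu\,\mu'(\tau_n)\,\varphi^s + \Lambda'_\tau\,\varphi^s + \Lambda(i\nu^s,s,\tau_n)\varphi'(\tau_n)=0$, where $\Lambda'_\tau = -\mu e^{-\mu\tau}B(s)$. Pairing with the adjoint kernel $\tilde\varphi^s$ annihilates the last term, and solving for $\mu'$ yields
\[
\frac{\mathrm d\mu}{\mathrm d\tau}(\tau_n) \;=\; -\,\frac{\langle \tilde\varphi^s,\,\Lambda'_\tau\varphi^s\rangle}{\langle \tilde\varphi^s,\,\Lambda'_\mu\varphi^s\rangle}
\;=\; -\,\frac{i\nu^s\,e^{-i\theta^s}\langle\tilde\varphi^s, B(s)\varphi^s\rangle}{S_n(s)}\,.
\]
Here I would use the relation $\langle\tilde\varphi^s,(A(s)+B(s)e^{-i\theta^s})\varphi^s\rangle = i\nu^s\langle\tilde\varphi^s,\varphi^s\rangle$ to re-express $e^{-i\theta^s}\langle\tilde\varphi^s,B(s)\varphi^s\rangle$, and then compute $\mathrm{Re}\big(\mathrm d\mu/\mathrm d\tau\big)$. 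Taking the limit $s\to 0$, where $\tilde\varphi^s\to$ a fixed multiple of $(\phi_*,(p_1^0-ip_2^0)\psi_*)^T$, $\varphi^s\to(\phi_*,(p_1^0+ip_2^0)\psi_*)^T$, $\theta^s\to\theta^0$, $\nu^s/s\to h^0$, the relevant inner products reduce to the quantities $d_1\kappa_1$, $d_2\kappa_2$, $K_i$ and the explicit formulas \eqref{3.32}; the sign of $\mathrm{Re}(\mathrm d\mu/\mathrm d\tau)$ at $s=0$ should come out strictly positive under $(\mathbf H_2)$ and $(\mathbf H_5)$ after substituting the expressions for $p_1^0,p_2^0,h^0$, and by continuity it remains positive for small $s>0$, shrinking $\bar\delta$ once more.

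The main obstacle I anticipate is the explicit sign computation in part~(ii): after the reduction to $s=0$ one is left with a rational expression in $d_1\kappa_1,d_2\kappa_2,K_1,\dots,K_4$ whose positivity is not obvious and must be forced by combining $(\mathbf H_2)$ (which fixes the sign of $K_1K_3-K_2K_4$ and of $K_1-K_4$, $K_3-K_4$) with the two-sided ratio bound in $(\mathbf H_5)$ and the region constraint $(d_1,d_2)\in D_3^1$. A secondary technical point is establishing that the adjoint operator $\Lambda(i\nu^s,s,\tau_n)^*$ has a one-dimensional kernel and identifying its generator; this follows the same decomposition $X_{\mathbb C}^2=\mathcal N(\mathcal L_{\lambda_*})\oplus(X_1)_{\mathbb C}^2$ and implicit-function-theorem argument used in Theorems~\ref{th3.4}--\ref{th3.5}, applied to the formal adjoint problem, so I would state it as a parallel lemma rather than redo the estimates. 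Everything else — Fredholm alternative, simplicity via nonvanishing solvability pairing, implicit differentiation — is routine once $S_n(s)\ne 0$ is in hand.
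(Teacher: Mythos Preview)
Your proposal follows the same overall strategy as the paper --- pair the relevant equations against a test function to extract the scalar $S_n(s)$, then for (ii) implicitly differentiate and pass to the limit $s\to0$ --- but differs in one methodological choice. Where you plan to pair with an element $\tilde\varphi^s$ of the adjoint kernel $\ker\Lambda(i\nu^s,s,\tau_n)^*$ and budget a separate ``parallel lemma'' to identify it, the paper pairs directly with $\bar\varphi^s$, the complex conjugate of the eigenfunction. Since $A(s)$ and $B(s)$ have real coefficients, $\bar\varphi^s$ solves $\Lambda(-i\nu^s,s,\tau_n)\bar\varphi^s=0$, and the paper invokes $\langle\Lambda(-i\nu^s)\bar\varphi^s,\xi\rangle=\langle\bar\varphi^s,\Lambda(i\nu^s)\xi\rangle$ to annihilate the $\Lambda\xi$-terms in both parts. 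This shortcut (standard in the Busenberg--Huang line) bypasses the adjoint analysis entirely; your route is more cautious about the non-self-adjoint drift terms in $A(s)$, but both land on the same pairing $S_n(s)$, and yours costs the extra lemma.

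You also overestimate the final sign check. After pairing with $\bar\varphi^s$ and letting $s\to0$, the paper uses the algebraic relations \eqref{3.37} from Theorem~\ref{th3.4} to rewrite $(d_1\kappa_1+d_2\kappa_2)\sin\theta^0$ in terms of $h^0$, $p_2^0$, and $K_4-K_3$, arriving at a short two-term expression whose positivity follows directly from the signs of $h^0$, $p_2^0$, $K_3-K_4$ already fixed by $(\mathbf{H_2})$ and \eqref{3.32}. The hypotheses $(\mathbf{H_5})$ and $(d_1,d_2)\in D_3^1$ are not invoked again at this step --- they were consumed earlier to guarantee that $h^0>0$ and $\theta^0\in(0,2\pi)$ exist --- so no delicate rational-function analysis is required.
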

\begin{proof}
	We first prove the part (i). From Theorem \ref{thm9}, we have
	\[
	\mathcal N[A_{\tau_n}-i\nu^sI]=\mathrm{Span}\{ e^{-\nu^st}\varphi^s \}, \quad t\in[-\tau_n, 0],
	\]
	where $\varphi^s$ is defined as in Theorem \ref{thm9}. Suppose that $\mu=i\nu^s$ is not a simple eigenvalue, which means that there exists $\tilde \varphi=(\tilde\varphi_1, \tilde\varphi_2)^T\in\mathcal N[A_{\tau_n}-i\nu^sI]^2$, i.e.,
	\[
	[A_{\tau_n}(s)-i\nu^sI]\tilde\varphi\in\mathcal N[A_{\tau_n}(s)-i\nu^sI]=\mathrm{Span}\{ e^{i\nu^st}\varphi^s \}.
	\]
	Hence, there exists a constant $\alpha$ such that $[A_{\tau_n}(s)-i\nu^sI]\tilde\varphi=\alpha e^{i\nu^st}\varphi^s$. And then there holds that
	\begin{align}\label{3.52}
		\tilde\varphi'(t)=& i\nu^s\tilde\varphi+\alpha e^{i\nu^st}\varphi^s, \quad t\in[-\tau_n, 0], \nonumber\\
		\tilde\varphi'(0)=& A(s)\tilde\varphi(0)+B(s)\tilde\varphi(-\tau_n).
	\end{align}
	It follows from the first equation of \eqref{3.52} that
	\begin{align}\label{3.53}
		\tilde\varphi(t)=& \tilde\varphi(0)e^{i\nu^st}+\alpha te^{i\nu^st}\varphi^s, \nonumber\\
		\tilde\varphi'(0)=& i\nu^s\tilde\varphi(0)+\alpha\varphi^s.
	\end{align}
	Substituting $t=-\tau_n$ into the first equation of \eqref{3.53}, and then combining the second equation of \eqref{3.52} gives that
	\begin{align}\label{3.54}
		\Lambda(i\nu^s, s, \tau_n)\tilde\varphi(0)=& [A(s)+B(s)e^{-i\theta^s}-i\nu^sI]\tilde\varphi(0) \nonumber\\
		=& \alpha(\varphi^s+\tau_ne^{-i\theta^s}B(s)\varphi^s),
	\end{align}
	Taking the inner product of \eqref{3.54} with $\bar\varphi^s$, we have
	\begin{align*}
		0=& \langle \Lambda(-i\nu^s, s, \tau_n)\bar\varphi^s, \tilde\varphi(0) \rangle
		=\langle \bar\varphi^s, \Lambda(i\nu^s, s, \tau_n)\tilde\varphi(0) \rangle \\
		=& \alpha\langle \bar\varphi^s, \varphi^s+\tau_ne^{-i\theta^s}B(s)\varphi^s \rangle
		:= \alpha S_n(s),
	\end{align*}
	where $S_n(s)\neq 0$ is defined as in Lemma \ref{lemma3.7}. That is $\alpha=0$. Hence, $\tilde\varphi\in\mathcal N[A_{\tau_n}-i\nu^sI]$. It can be derived by induction as
	\[
	\mathcal N[A_{\tau_n}(s)-i\nu^sI]^j=\mathcal N[A_{\tau_n}-i\nu^s], \quad j=2,3,\dots.
	\]
	and $\mu=i\nu^s$ is a simple eigenvalue of $A_{\tau_n}(s)$ for $n=0,1,2,\dots$ by contradiction. This completes the proof of part (i).
	
	Next, we show the transversality conditions in part (ii). It follows from the implicit function theorem that there exists a neighborhood $O_n\times D_n\times H_n^2\subset \mathbb R\times\mathbb C\times X_{\mathbb C}^2$ of $(\tau_n, i\nu^s, \varphi^s)$ and a continuous differential function $(\mu, \varphi): O_n\to D_n\times H_n^2$ such that $\mu(\tau)$ is the unique eigenvalue of $A_{\tau}(s)$ in $D_n$ with the associated eigenfunction $\varphi^s(\tau)$, that is,
    \begin{align}\label{3.55}
        &\Lambda(\mu(\tau), s, \tau)\varphi^s(\tau)=A(s)\varphi^s(\tau)+B(s)e^{-i\theta^s}\varphi^s(\tau)-i\nu^s\varphi^s(\tau) = 0,
    \end{align}
    with $\mu(\tau_n)=i\nu^s$ and $\varphi^s(\tau_n)=\varphi^s$. Differentiating \eqref{3.55} with respect to $\tau$ at $\tau=\tau_n$ yields
    \begin{equation}\label{3.56}
    	\dfrac{\mathrm{d}\mu(\tau_n)}{\mathrm{d}\tau}(\varphi^s+\tau_ne^{-i\theta^s}B(s)\varphi^s)
    	=\Lambda(i\nu^s, s, \tau_n)\dfrac{\mathrm{d}\varphi^s(\tau_n)}{\mathrm{d}\tau}-i\nu^se^{-i\theta^s}B(s)\varphi^s,
    \end{equation}
    taking the inner product of \eqref{3.56} with $\bar\varphi^s$ gives that
    \begin{align*}
    	\dfrac{\mathrm{d}\mu(\tau_n)}{\mathrm{d}\tau}
    	=& \dfrac{\langle \varphi^s, -i\nu^se^{-i\theta^s}B(s)\varphi^s \rangle}{\langle \varphi^s, \varphi^s+\tau_ne^{-i\theta^s}B(s)\varphi^s \rangle} \\
    	=& -\dfrac{1}{|S_n|^2} \left( \langle \varphi^s, \varphi^s \rangle\langle \varphi^s, i\nu^se^{-i\theta^s}B(s)\varphi^s \rangle
    	+ i\nu^s\tau_n|\langle \varphi^s, e^{-i\theta^s}B(s)\varphi^s \rangle|^2 \right) \\
    	=& -\dfrac{1}{|S_n|^2}i\nu^se^{-i\theta^s}\int_{\Omega} (\varphi_1^s)^2+(\varphi_2^s)^2\mathrm{d}x\left( d_1\int_{\Omega}\varphi_1^s\nabla\cdot(u_s\nabla\varphi_1^s)\mathrm{d}x+ d_2\int_{\Omega}\varphi_2^s\nabla\cdot(v_s\nabla\varphi_2^s)\mathrm{d}x \right) \\
    	&- \dfrac{1}{|S_n|^2} i\nu^se^{-i\theta^s}\tau_n \left(d_1\int_{\Omega}\varphi_1^s\nabla\cdot(u_s\nabla\varphi_1^s)\mathrm{d}x+ d_2\int_{\Omega}\varphi_2^s\nabla\cdot(v_s\nabla\varphi_2^s)\mathrm{d}x\right)^2.
    \end{align*}
    Let $s\to 0$, it follows from \eqref{3.37} and \eqref{3.50} that
    \begin{align*}
    	\lim\limits_{s\to 0} \dfrac{1}{s^2}\mathrm{Re}(\dfrac{\mathrm{d}\mu(\tau_n)}{\mathrm{d}\tau})
    	=& \dfrac{2h^0}{|S_n|^2}(d_1\kappa_1+d_2\kappa_2)\sin{\theta^0}= -\dfrac{2h^0}{|S_n|^2}(K_4-K_3)p_2^0
    	+\dfrac{4(h^0)^2}{|S_n|^2}>0.
    \end{align*}
    The proof is completed.
\end{proof}

Moreover, similar to the discussion under the assumption $\mathbf{(H_{2})}$, we obtain the following results for the assumption $\mathbf{(H_{3})}$.  
\begin{theorem}\label{th3.9}
	Under the assumption $\mathbf{(H_{0})}$, $\mathbf{(H_{1})}$ and $\mathbf{(H_{3})}$, if
	\begin{enumerate}
		\item[$\mathbf{(H_{6})}$] $\dfrac{K_2\kappa_1}{K_1\kappa_2}<\dfrac{d_2}{d_1}<\dfrac{(K_2+K_3)\kappa_1}{(K_1+K_4)\kappa_2}$ and $(d_1, d_2)\in D_3^2$,
	\end{enumerate}
	then, 
	\begin{enumerate}
		\item[(i)] there exists a constant $\tilde{\delta}\in(0,\delta)$ and a unique continuously differentiable map $\tilde W(s): [0,\tilde {\delta}]\to (X_1)_{\mathbb C}^2\times \mathbb R^4$ with $\tilde W(s):=(\tilde z_1^s, \tilde z_2^s, \tilde p_1^s, \tilde p_2^s, \tilde h^s, \tilde \theta^s)$, such that 
    	\[
    	G(\tilde W(s), s)=0, \quad \tilde W(0)=\tilde W^0,
    	\]
    	where $\tilde W^0:=(z_1^0, z_2^0, p_1^0, p_2^0, h^0, \theta^0)$ with 
    	\begin{align}\label{59}
			& p_1^0=\dfrac{d_2\kappa_2K_1-d_1\kappa_1K_2}{d_1\kappa_1K_3-d_2\kappa_2K_4}>0, \quad 
			p_2^0=\dfrac{\sqrt{(d_1\kappa_1K_3-d_2\kappa_2K_4)^2-(d_2\kappa_2K_1-d_1\kappa_1K_2)^2}}{d_2\kappa_2K_4-d_1\kappa_1K_3}>0, \nonumber\\
			& h^0=\dfrac{d_2\kappa_2K_4+d_1\kappa_1K_3}{d_1\kappa_1-d_2\kappa_2}p_2>0, \quad
			\theta^0=\arccos{d_*}\in[0, 2\pi].
		\end{align}
	
		\item[(ii)] for $s\in(0,\tilde {\delta})$, the eigenvalue problem
    	\[
    	\Lambda(\nu_n, s, \tau)\varphi=0, \quad \tau>0, \quad \varphi=(\varphi_1, \varphi_2)\in X_{\mathbb C}^2\backslash\{(0,0)\}, 
    	\]
    	has solution $(\nu, s, \tilde \varphi)$, if and only if 
    	\begin{align}\label{60}
    		&\nu=\tilde \nu^s:=s\tilde h^s, \quad \tilde \tau=\tilde \tau_n:=\dfrac{\tilde \theta^s+2n\pi}{\tilde \nu^s}, \ n=0, 1, 2, \dots \\
    		&\tilde \varphi=c\tilde \varphi^s=c(\tilde \varphi_1^s, \tilde \varphi_2^s)^T=c\left(\begin{array}{cc}
            	\phi_*+s\tilde z_1^s \nonumber\\
            	(\tilde p_1^s+i\tilde p_2^s)\psi_*+s\tilde z_2^s
            	\end{array}\right),
    	\end{align}
    	where $c$ is a non-zero constant.
    
    	\item[(iii)] for any $n=0, 1, 2, \dots$, $\mu=i\tilde \nu^s$ is a simple eigenvalue of $A_{\tilde \tau_n}(s)$, and the transversality conditions $\mathrm{Re}\left(\dfrac{\mathrm{d}\mu}{\mathrm{d}\tilde \tau_n}(\tilde \tau_n)\right)> 0$ holds.
	\end{enumerate}
\end{theorem}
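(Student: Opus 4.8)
The plan is to transcribe, with the sign changes forced by $\mathbf{(H_3)}$ in place of $\mathbf{(H_2)}$ and by $\mathbf{(H_6)}$ in place of $\mathbf{(H_5)}$, the three-stage argument already carried out for the case $\mathbf{(H_2)}$--$\mathbf{(H_5)}$ in Theorems \ref{th3.4}, \ref{th3.5}, \ref{thm7}, Lemma \ref{lemma3.7} and Theorem \ref{thm9}. \textbf{Step 1 (solve $G(\cdot,0)=0$).} Setting $s=0$ in \eqref{3.27} again produces the decoupled system \eqref{3.34}; multiplying its first two equations by $\phi_*$ and $\psi_*$, integrating over $\Omega$ and using $\|\phi_*\|_{Y_{\mathbb C}}=\|\psi_*\|_{Y_{\mathbb C}}=1$ gives the scalar system \eqref{3.36}, whose real/imaginary split is \eqref{3.37} together with $p_1^2+p_2^2=1$. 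I would solve this algebraic system exactly as in Theorem \ref{th3.4}, but now invoking $\mathbf{(H_3)}$ (so $K_1K_3-K_2K_4>0$, $K_1-K_4>0$, $K_3-K_4<0$) to select the branch with $p_1^0>0$, $p_2^0>0$, $h^0>0$ and $\theta^0=\arccos d^*\in[0,2\pi]$, yielding the formulas \eqref{59}. The one genuinely new point is that $d^*=\dfrac{K_1K_3-K_2K_4}{d_1\kappa_1K_3-d_2\kappa_2K_4}\in[-1,1]$; here I would use the two-sided bound $\dfrac{K_2\kappa_1}{K_1\kappa_2}<\dfrac{d_2}{d_1}<\dfrac{(K_2+K_3)\kappa_1}{(K_1+K_4)\kappa_2}$ from $\mathbf{(H_6)}$ (its upper endpoint differs from that of $\mathbf{(H_5)}$ precisely because $K_3-K_4<0$), which should pin $|d^*|<1$ and hence make $\theta^0$ well defined with $\sin\theta^0\neq0$. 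This establishes the $s=0$ analog with a unique admissible solution $\tilde W^0$.

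\textbf{Step 2 (perturb in $s$, part (i)).} I would form $T:=D_{(z_1,z_2,p_1,p_2,h,\theta)}G(\tilde W^0,0)$, whose components have exactly the shape \eqref{3.38} with $(p_1^0,p_2^0,h^0,\theta^0)$ now given by \eqref{59}. Injectivity is copied verbatim from the argument after \eqref{3.38}: the $T_3$-equation forces $\varrho_1=\varrho_2=0$, pairing with $\phi_*$ and splitting real/imaginary parts forces $\vartheta=\epsilon=0$ (using $\sin\theta^0\neq0$), and then $\chi_1=\chi_2=0$; surjectivity follows from the same explicit back-substitution. The implicit function theorem then yields $\tilde\delta\in(0,\delta)$ and the $C^1$ map $\tilde W(s)$ with $\tilde W(0)=\tilde W^0$ and $G(\tilde W(s),s)=0$. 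For uniqueness I would reuse Lemma \ref{lemma3.3} to bound $\{\tilde h^s\}$, the boundedness of $\{\tilde p_i^s\},\{\tilde\theta^s\}$ coming from \eqref{3.27}, and the energy estimates combined with the second-eigenvalue gap of Lemma \ref{lemma3.2} to bound $\{\tilde z_1^s\},\{\tilde z_2^s\}$ in $X_{\mathbb C}$, exactly as in the proof of Theorem \ref{th3.5}; precompactness, passage to the limit, and uniqueness at $s=0$ from Step 1 then close the argument for (i).

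\textbf{Steps 3--4 (parts (ii) and (iii)).} Part (ii) is immediate from (i): the decomposition $X_{\mathbb C}^2=\mathcal N(\mathcal L_{\lambda_*})\oplus(X_1)_{\mathbb C}^2$ recasts \eqref{3.24} as $G(\tilde W(s),s)=0$, so a solution exists iff $\nu=\tilde\nu^s=s\tilde h^s$ and $\tau=\tilde\tau_n=(\tilde\theta^s+2n\pi)/\tilde\nu^s$, with eigenfunction $\tilde\varphi^s$ as in \eqref{60}. For simplicity in (iii) I would run the Jordan-chain contradiction of Theorem \ref{thm9}(i): a generalized eigenvector would give $\alpha\tilde S_n(s)=0$ with $\tilde S_n(s)$ the $\mathbf{(H_3)}$-version of \eqref{3.50}, and Lemma \ref{lemma3.7} already covers $\mathbf{(H_3)}$ (its proof gives $\mathrm{Im}(S_n(0))>0$ from $p_1^0p_2^0>0$, $p_2^0(K_4-K_3)>0$ and $\dfrac{d_2\kappa_2(K_1K_3-K_2K_4)}{d_1\kappa_1K_3-d_2\kappa_2K_4}>0$, all valid here), so $\tilde S_n(s)\neq0$ for small $s$ and $\alpha=0$. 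For transversality I would differentiate $\Lambda(\mu(\tau),s,\tau)\tilde\varphi^s(\tau)=0$ at $\tau=\tilde\tau_n$, pair with $\overline{\tilde\varphi^s}$, reach the same closed formula for $\dfrac{\mathrm d\mu}{\mathrm d\tilde\tau_n}(\tilde\tau_n)$ as in Theorem \ref{thm9}, and let $s\to0$, using \eqref{3.37} to reduce $\lim_{s\to0}\frac{1}{s^2}\mathrm{Re}\bigl(\frac{\mathrm d\mu}{\mathrm d\tilde\tau_n}(\tilde\tau_n)\bigr)$ to $\frac{2h^0}{|\tilde S_n|^2}(d_1\kappa_1+d_2\kappa_2)\sin\theta^0$, which under $\mathbf{(H_3)}$ has the required positive sign (since $K_4-K_3>0$ and $p_2^0>0$).

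\textbf{Main obstacle.} Everything after Step 1 is a sign-tracked transcription of the $\mathbf{(H_2)}$ arguments, so the one genuinely new piece of work is the algebra of Step 1: verifying that under the sign pattern $K_1K_3-K_2K_4>0$, $K_1-K_4>0$, $K_3-K_4<0$ together with the window for $d_2/d_1$ in $\mathbf{(H_6)}$, the quadratic produced by \eqref{3.37} and $p_1^2+p_2^2=1$ has a root with $p_1^0\in(0,1)$, that $d^*\in[-1,1]$, and that $p_2^0,h^0>0$ with the signs recorded in \eqref{59}. Once this is settled, bijectivity of $T$, the uniqueness bounds, simplicity, and the transversality sign all follow exactly as in the $\mathbf{(H_2)}$ case.
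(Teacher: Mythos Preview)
Your proposal is correct and matches the paper's own treatment: the paper does not write out a separate proof of this theorem but simply declares it follows ``similar to the discussion under the assumption $\mathbf{(H_2)}$,'' and your plan is exactly that transcription with the sign changes tracked. You have also correctly observed that the proof of Lemma~\ref{lemma3.7} already explicitly covers the $\mathbf{(H_3)}$ case, so no new work is needed there.
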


Next, we show the results for the steady state solution $(u_s,v_s)$ to be stable. 
\begin{theorem}\label{th3.10}
	Under the assumption $\mathbf{(H_0)}$, $\mathbf{(H_1)}$ and $(d_1, d_2)\in D_2$, there exists a constant $\tilde\delta\in(0, \delta]$, such that all the eigenvalues of $A_{\tau}(s)$ have negative real parts for $\tau\ge 0$ and $s\in(0, \tilde\delta)$.
\end{theorem}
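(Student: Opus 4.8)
\smallskip
\noindent The plan is to argue by contradiction and, by a compactness argument, to reduce the question to the scalar ``limiting characteristic equation'' at $s=0$, i.e.\ the one already encountered in \eqref{3.36}. Suppose the assertion fails. Then there exist sequences $s_j\downarrow 0$, $\tau_j\ge 0$, complex numbers $\mu_j$ with $\mathrm{Re}\,\mu_j\ge 0$, and eigenfunctions $\Psi_j=(\phi_j,\psi_j)^T$ solving $\Lambda(\mu_j,s_j,\tau_j)\Psi_j=0$, normalised so that $\|\phi_j\|^2_{Y_{\mathbb C}}+\|\psi_j\|^2_{Y_{\mathbb C}}=1$. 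First I would gather a priori bounds: Lemma~\ref{lemma3.1} gives that $\|\nabla\phi_j\|_{Y_{\mathbb C}}$, $\|\nabla\psi_j\|_{Y_{\mathbb C}}$ are bounded, and taking the $Y_{\mathbb C}^2$ inner product of the eigenvalue equation with $\Psi_j$ and separating real and imaginary parts as in Lemma~\ref{lemma3.3} (using $|e^{-\mu_j\tau_j}|\le 1$ and $\|u_{s_j}\|_\infty,\|v_{s_j}\|_\infty=O(s_j)$) shows that $\mu_j$ is bounded. Then, after passing to a subsequence, $\mu_j\to\mu_\infty$ with $\mathrm{Re}\,\mu_\infty\ge 0$ and, by the compact embedding $H_0^1(\Omega)\hookrightarrow L^2(\Omega)$, $\phi_j\to\phi_\infty$, $\psi_j\to\psi_\infty$ in $Y_{\mathbb C}$. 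Letting $j\to\infty$ in the eigenvalue equation, the memory terms drop out (they carry factors $u_{s_j}$ or $v_{s_j}$) and one is left with $(\triangle+\lambda_i^*r_i(x))$-type equations, whence $\mathrm{Re}\,\mu_\infty\ge 0$ forces $\mu_\infty=0$, $\phi_\infty=c_1\phi_*$, $\psi_\infty=c_2\psi_*$ with $|c_1|^2+|c_2|^2=1$, because $0$ is the principal eigenvalue of $-(\triangle+\lambda_i^*r_i)$ by the variational characterisation \eqref{3.5}.

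Next I would extract the leading order in $s$. Testing \eqref{3.19} against $\phi_*$ and against $\psi_*$ and using that $\lambda_{1,s_j}-\lambda_1^*$, $\lambda_{2,s_j}-\lambda_2^*$, $u_{s_j}$, $v_{s_j}$, $A_{12}^{s_j}$, $A_{21}^{s_j}$ and $\nabla u_{s_j},\nabla v_{s_j}$ are all $O(s_j)$ (this needs the uniform $C^{1+\gamma}$ bounds on $u_s/s$ and $v_s/s$ coming from \eqref{18} together with elliptic regularity), every term except $\mu_j\langle\phi_*,\phi_j\rangle$, respectively $\mu_j\langle\psi_*,\psi_j\rangle$, is $O(s_j)$; since $(\langle\phi_*,\phi_j\rangle,\langle\psi_*,\psi_j\rangle)\to(c_1,c_2)\ne(0,0)$, this yields $\mu_j=O(s_j)$. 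Put $\hat\mu_j:=\mu_j/s_j$ and, along a further subsequence, $\hat\mu_j\to\hat\mu$ with $\mathrm{Re}\,\hat\mu\ge 0$ and $e^{-\mu_j\tau_j}\to\sigma$ with $|\sigma|\le 1$. Dividing the $\phi_*$- and $\psi_*$-tested forms of \eqref{3.19} by $s_j$, passing to the limit, and using $(\lambda_{i,s_j}-\lambda_i^*)/s_j\to\lambda_i'(0)$, \eqref{2.8}, \eqref{kappa} and \eqref{K} leads to the reduced system
\[
c_1\big(d_1\kappa_1\sigma-K_1-\hat\mu\big)=c_2K_4,\qquad
c_2\big(d_2\kappa_2\sigma-K_2-\hat\mu\big)=c_1K_3,
\]
which is exactly \eqref{3.36} with $e^{-i\theta}$ replaced by $\sigma$ and $ih$ by $\hat\mu$. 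Since $K_3,K_4>0$, neither $c_1$ nor $c_2$ can vanish, so multiplying the two identities gives $(d_1\kappa_1\sigma-K_1-\hat\mu)(d_2\kappa_2\sigma-K_2-\hat\mu)=K_3K_4$.

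It then remains to show that this is impossible for $(d_1,d_2)\in D_2$. Writing $\alpha=d_1\kappa_1\sigma-K_1-\hat\mu$, $\beta=d_2\kappa_2\sigma-K_2-\hat\mu$, one has $\alpha\beta=K_3K_4>0$, while $\mathrm{Re}(\alpha+\beta)=(d_1\kappa_1+d_2\kappa_2)\mathrm{Re}\,\sigma-(K_1+K_2)-2\mathrm{Re}\,\hat\mu$, which is $<0$ by $\mathrm{Re}\,\hat\mu\ge 0$, $|\mathrm{Re}\,\sigma|\le 1$ and the defining inequality $|d_1\kappa_1+d_2\kappa_2|<K_1+K_2$ of $D_2$. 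Combining this with the product identity and with the remaining relations imposed by $\mathrm{Re}\,\hat\mu\ge 0$ (namely $\mathrm{Re}\,\alpha\le d_1\kappa_1\mathrm{Re}\,\sigma-K_1$ and $\mathrm{Re}\,\beta\le d_2\kappa_2\mathrm{Re}\,\sigma-K_2$) should give the contradiction; concretely this amounts to a Routh--Hurwitz-type check that the quadratic $\hat\mu^2+\big[(K_1-d_1\kappa_1\sigma)+(K_2-d_2\kappa_2\sigma)\big]\hat\mu+(K_1-d_1\kappa_1\sigma)(K_2-d_2\kappa_2\sigma)-K_3K_4$ has no zero in the closed right half-plane for every $\sigma$ with $|\sigma|\le 1$. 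This contradiction would produce the desired $\tilde\delta$.

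The compactness and limiting steps are, I expect, routine once the bound $\mu_j=O(s_j)$ has been secured, though making the passage to the limit rigorous in the nonlocal and delay terms requires some care with the uniform elliptic estimates on $u_s/s$, $v_s/s$. The main obstacle is the last paragraph: the purely algebraic elimination showing that the reduced equation has no root with nonnegative real part, uniformly over $\sigma$ in the closed unit disc, extracting everything solely from the strip condition $|d_1\kappa_1+d_2\kappa_2|<K_1+K_2$ that defines $D_2$ (together, as needed, with $(d_1,d_2)\notin D_1$).
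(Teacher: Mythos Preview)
Your contradiction--compactness--rescaling programme is exactly the paper's. The paper carries it out by decomposing $\varphi=\Phi_*+(p_1+ip_2)\Psi_*+sz$ (the ansatz \eqref{3.26}/\eqref{3.58}) and writing $\mu=sh$, then extracting a convergent subsequence and passing to the limit to obtain a reduced algebraic system; it also splits off the case $\tau=0$ first, but that is cosmetic. Your limiting pair of equations
\[
c_1\big(d_1\kappa_1\sigma-K_1-\hat\mu\big)=c_2K_4,\qquad c_2\big(d_2\kappa_2\sigma-K_2-\hat\mu\big)=c_1K_3
\]
is precisely the paper's \eqref{3.67} with $\sigma=\hat\rho e^{-i\hat\theta}$ and $c_2/c_1=\hat p_1+i\hat p_2$.

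Where you part company is the step you correctly flag as the obstacle. You \emph{multiply} the two relations and attempt a Routh--Hurwitz check on a quadratic in $\hat\mu$ with \emph{complex} coefficients, uniformly in $|\sigma|\le1$; this is awkward, and you leave it unresolved. The paper instead \emph{adds}. Writing $\hat p=c_2/c_1$ and dividing the second reduced equation by $\hat p$, one has $d_1\kappa_1\sigma-K_1-K_4\hat p=\hat h$ and $d_2\kappa_2\sigma-K_2-K_3\bar{\hat p}/|\hat p|^2=\hat h$; summing the real parts gives
\[
2\,\mathrm{Re}\,\hat h=(d_1\kappa_1+d_2\kappa_2)\,\mathrm{Re}\,\sigma-(K_1+K_2)-\Bigl(K_4+\tfrac{K_3}{|\hat p|^2}\Bigr)\mathrm{Re}\,\hat p.
\]
The paper's normalisation (built into $G_3$/$G_6$/$G_9$) forces $|\hat p|^2=\hat p_1^2+\hat p_2^2=1$ with $\hat p_1\ge0$, so the last term is $-(K_3+K_4)\hat p_1\le0$; then $|(d_1\kappa_1+d_2\kappa_2)\mathrm{Re}\,\sigma|\le|d_1\kappa_1+d_2\kappa_2|<K_1+K_2$, which is exactly the defining inequality of $D_2$, and the contradiction $\mathrm{Re}\,\hat h<0$ drops out in one line. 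No quadratic, no Routh--Hurwitz, and $D_1$ plays no role. In your $(c_1,c_2)$ formulation the analogous move would be to normalise so that $|c_2/c_1|=1$ and $\mathrm{Re}(c_2/c_1)\ge0$; you do not impose this, which is why your product route leaves a loose end.
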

\begin{proof}
	Firstly, when $\tau=0$, the stability of $(u_s, v_s)$ is determined by:
	\begin{equation}\label{3.57}
		A_0(s)\varphi=A(s)\varphi+B(s)\varphi-\mu\varphi.
	\end{equation} 
	Suppose that there exists a sequence $\{ (s_i, \mu^{s_i}, \varphi^{s_i} \}\subset (0, \tilde\delta] \times \mathbb C \times X_{\mathbb C}^2\backslash\{(0, 0)\} $ such that $s_i\to 0$ as $i\to\infty$ and $\mathrm{Re}(\mu^{s_i})\ge 0$ for $i\ge 1$. By the decomposition $X_{\mathbb C}^2=\mathcal{\mathcal L_{\lambda_*}}\oplus (X_1)_{\mathbb C}^2$, the eigenfunction $\varphi^{s_i}=(\varphi_1^{s_i}, \varphi_2^{s_i})$ can be rewritten as
	\begin{align}\label{3.58}
   		& \varphi=\Phi_*+(p_1^{s_i}+ip_2^{s_i})\Psi_*+s_iz^{s_i}, \quad p_1^{s_i}>0 \nonumber \\
    	& \|\varphi^{s_i}\|^2_{Y_{\mathbb C}}=\|\Phi\|^2_{Y_{\mathbb C}}+\|\Psi\|^2_{Y_{\mathbb C}},
	\end{align}
	where $z^{s_i}=(z_1^{s_i}, z_2^{s_i})^T\in (X_1)_{\mathbb C}^2$. Substituting \eqref{3.58}, \eqref{2.6} and $\mu^{s_i}=s_ih^{s_i}$ into \eqref{3.57}, we have
	\begin{equation}\label{3.59}
    \begin{cases}
        G_4(z_1^{s_i}, z_2^{s_i}, p_1^{s_i}, p_2^{s_i}, h^{s_i}, s_i):=
        &(\triangle+\lambda_1^*r_1(x))z_1^{s_i} \\
        &+d_1\nabla\cdot[(\phi_*+s_iz_1^{s_i})\nabla(\cos{\omega}\phi_*+w_1(s))] \\
        &+d_1\nabla\cdot[(\cos{\omega}\phi_*+w_1(s))\nabla (\phi_*+s_iz_1^{s_i})] \\
        &+(\phi_*+s_iz_1^{s_i})[(\lambda_1'(0)+o(s))r_1(x)-h^{s_i}] \\
        &-\lambda_{1s}(\phi_*+s_iz_1^{s_i})[2a_{11}(\cos{\omega}\phi_*+w_1(s))+a_{12}(\sin{\omega}\psi_*+w_2(s))]\\ 
        &-((p_1^{s_i}+ip_2^{s_i})\psi_*+s_iz_2^{s_i})a_{12}\lambda_{1s}(\cos{\omega}\phi_*+w_1(s)) \\
        G_5(z_1^{s_i}, z_2^{s_i}, p_1^{s_i}, p_2^{s_i}, h^{s_i}, s_i):=
        &(\triangle+\lambda_2^*r_2(x))z_2^{s_i} \\      
        &+ d_2\nabla\cdot[((p_1^{s_i}+ip_2^{s_i}) \psi_*+s_iz_2^{s_i}) \nabla(\sin{\omega}\psi_*+w_2(s))] \\
        &+d_2\nabla\cdot[(\sin{\omega}\psi_*+w_2(s))\nabla ((p_1^{s_i}+ip_2^{s_i})\psi_*+s_iz_2^{s_i})] \\
        &-(\phi_*+s_iz_1^{s_i})a_{21}\lambda_{2s}(\sin{\omega}\psi_*+w_2(s)) \\
        &+\lambda_{2s}((p_1^{s_i}+ip_2^{s_i})\psi_*+s_iz_2^{s_i})[(\lambda_2'(0)+o(s))r_2(x)-h^{s_i}] \\
        &-((p_1^{s_i}+ip_2^{s_i})\psi_*+s_iz_2^{s_i})[a_{21}(\cos{\omega}\phi_*+w_1(s))\\
        &+2a_{22}(\sin{\omega}\psi_*+w_2(s))]\\   
        G_6(z_1^{s_i}, z_2^{s_i}, p_1^{s_i}, p_2^{s_i}, h^{s_i}, s_i):=
        &((p_1^{s_i})^2+(p_2^{s_i})^2-1)\|\psi_*\|^2_{Y_{\mathbb C}}+s^2\|z_1^{s_i}\|^2_{Y_{\mathbb C}}.
    \end{cases}
	\end{equation}
	and a continuously differentialable mapping from $(X_1)_{\mathbb C}^2\times\mathbb R^+\times \mathbb R^3$ to $Y_{\mathbb C}^2\times \mathbb R$ with 
	\[
	\tilde{G}(z_1^{s_i}, z_2^{s_i}, p_1^{s_i}, p_2^{s_i}, h^{s_i}, s_i)=(G_4, G_5, G_6)^T=0.
	\]
	Similar to the proof of Theorem \ref{thm7}, we obtain $\{z_1^{s_i}\}$, $\{z_2^{s_i}\}$ are bounded in $Y_{\mathbb C}$, $\{p_1^{s_i}\}$, $\{p_2^{s_i}\}$ and $\{h^{s_i}\}$ are bounded for $s\in(0,\tilde\delta)$. On the other hand, $\{z_1^{s_i}\}$ and $\{z_2^{s_i}\}$ are also bounded in $X_{\mathbb C}$ since the operator $\mathcal N(\lambda_*): (X_1)_{\mathbb C}^2\mapsto (Y_1)_{\mathbb C}^2$ has a bounded inverse. Hence, $\{ (z_1^{s_i}, z_2^{s_i}, p_1^{s_i}, p_2^{s_i}, h^{s_i})\}_{i=1}^{\infty}$ is precompact in $Y_{\mathbb C}^2\times \mathbb R^3$. That is, there exists a subsequence $\{ (z_1^{s_{i_j}}, z_2^{s_{i_j}}, p_1^{s_{i_j}}, p_2^{s_{i_j}}, h^{s_{i_j}})\}_{j=1}^{\infty}$ satisfies $s_{i_j}\to 0$ as $j\to\infty$, and
	\[
	(z_1^{s_{i_j}}, z_2^{s_{i_j}}, p_1^{s_{i_j}}, p_2^{s_{i_j}}, h^{s_{i_j}})\to (z_1^*, z_2^*, p_1^*, p_2^*, h^*)\in Y_{\mathbb C}^2\times\mathbb R^+ \times\mathbb R \times\mathbb C, 
	\]
	with $\mathrm{Re}(h^*)\ge 0$. Applying the inverse operator $\mathcal N(\lambda_*)^{-1}$ on the both sided of $\bar G=0$, i.e.,
	\begin{equation}\label{3.60}
		\mathcal N(\lambda_*)^{-1}\bar G(z_1^{s_{i_j}}, z_2^{s_{i_j}}, p_1^{s_{i_j}}, p_2^{s_{i_j}}, h^{s_{i_j}}, s_{i_j})=0,
	\end{equation} 
	and then let $j\to\infty$, it follows from \eqref{3.60} that $(z_1^*, z_2^*)\in (X_1)_{\mathbb C}^2$ satisfies
	\begin{equation}\label{3.61}
    \begin{cases}
        0=& (\triangle+\lambda_1^*r_1(x))z_1^*+ d_1[\nabla\cdot(\phi_*\nabla(\cos{\omega}\phi_*))+\nabla\cdot(\cos{\omega}\phi_*\nabla \phi_*)] \\
        &+\phi_*(\lambda_1'(0)r_1(x)-h^*)-\lambda_1^*(2a_{11}\cos{\omega}\phi_*^2+a_{12}\sin{\omega}\phi_*\psi_*)-(p_1^*+ip_2^*)\lambda_1^*a_{12}\cos{\omega}\phi_*\psi_* , \\
        0=& (\triangle+\lambda_2^*r_2(x))z_2+ d_2(p_1^*+ip_2^*)[\nabla\cdot(\psi_*\nabla(\sin{\omega}\psi_*))+\nabla\cdot(\sin{\omega}\psi_*\nabla \psi_*)] \\
        &+(p_1^*+ip_2^*)\psi_*(\lambda_2'(0)r_2(x)-h^*)-a_{21}\lambda_2^*\sin{\omega}\phi_*\psi_*-\lambda_2^*(p_1^*+ip_2^*)(a_{21}\cos{\omega}\phi_*\psi_*+2a_{22}\sin{\omega}\psi_*^2) , \\
        0=& (p_1^*)^2+(p_2^*)^2-1.
    \end{cases}
	\end{equation}
	Taking the inner products of the first and the second equations of \eqref{3.61} with $\phi_*$ and $\psi_*$, respectively, we obtain
	\begin{equation}\label{3.62}
    \begin{cases}
        d_1\kappa_1-K_1-K_4(p_1^*+ip_2^*)=h^*, \\
        d_2(p_1^*+ip_2^*)\kappa_2-K_2(p_1^*+ip_2^*)-K_3 =(p_1^*+ip_2^*)h^*.
    \end{cases}
    \end{equation}
    Separating the real and imaginary parts of two equations for \eqref{3.36}, we obtain
    \begin{equation}\label{3.63}
    \begin{cases}
        d_1\kappa_1-K_1-K_4p_1^*=\mathrm{Re}(h^*), \\
        -K_4p_2^*=\mathrm{Im}(h^*), \\
        d_2\kappa_2-K_2-K_3p_1^*=\mathrm{Re}(h^*), \\
    	K_3p_2^*=\mathrm{Im}(h^*).
    \end{cases}
    \end{equation}
    From the second, the fourth equations of \eqref{3.63} and the third equation of \eqref{3.61}, we see that $p_2^*=0$ and $p_1^*=1$. Therefore, It follows from the first and the third equations of \eqref{3.63} that 
    \[
    \mathrm{Re}(h^*)=\dfrac{(d_1\kappa_1+d_2\kappa_2)-(K_1+K_2)-(K_3+K_4)}{2}<0,
    \]
    which is a contradiction to $\mathrm{Re}(h^*)\ge 0$.
    
    Next, we consider the $\tau>0$ situation by applying a similar method to the case $\tau=0$. Suppose that there exists a sequence $\{ (s_n, \mu^{s_n}, \varphi^{s_n} \}\subset (0, \tilde\delta] \times \mathbb C \times X_{\mathbb C}^2\backslash\{(0, 0)\} $ such that $s_n\to 0$ as $n\to\infty$ and $\mathrm{Re}(\mu^{s_n})\ge 0$ for $i\ge 1$. We rewrite the eigenfunction $\varphi^{s_n}=(\varphi_1^{s_n}, \varphi_2^{s_n})$ as
	\begin{align}\label{3.64}
   		& \varphi=\Phi_*+(p_1^{s_n}+ip_2^{s_n})\Psi_*+s_nz^{s_n}, \quad p_1^{s_n}>0 \nonumber \\
    	& \|\varphi^{s_n}\|^2_{Y_{\mathbb C}}=\|\Phi\|^2_{Y_{\mathbb C}}+\|\Psi\|^2_{Y_{\mathbb C}},
	\end{align}
	where $z^{s_n}=(z_1^{s_n}, z_2^{s_n})^T\in (X_1)_{\mathbb C}^2$. Substituting \eqref{3.64}, \eqref{2.6} and $\mu^{s_n}=s_nh^{s_n}$ into \eqref{3.64}, we have
	\begin{equation}\label{3.65}
    \begin{cases}
        G_7(z_1^{s_n}, z_2^{s_n}, p_1^{s_n}, p_2^{s_n}, h^{s_n}, \tau^{s_n}, s_n):=
        &(\triangle+\lambda_1^*r_1(x))z_1^{s_n} \\
        &+d_1\nabla\cdot[(\phi_*+s_nz_1^{s_n})\nabla(\cos{\omega}\phi_*+w_1(s))] \\
        &+d_1\nabla\cdot[(\cos{\omega}\phi_*+w_1(s))\nabla (\phi_*+s_nz_1^{s_n})]e^{-s_nh^{s_n}\tau^{s_n}} \\
        &+(\phi_*+s_nz_1^{s_n})[(\lambda_1'(0)+o(s))r_1(x)-h^{s_n}] \\
        &-\lambda_{1s}(\phi_*+s_nz_1^{s_n})[2a_{11}(\cos{\omega}\phi_*+w_1(s))+a_{12}(\sin{\omega}\psi_*+w_2(s))]\\ 
        &-((p_1^{s_n}+ip_2^{s_n})\psi_*+s_nz_2^{s_n})a_{12}\lambda_{1s}(\cos{\omega}\phi_*+w_1(s)) \\
        G_8(z_1^{s_n}, z_2^{s_n}, p_1^{s_n}, p_2^{s_n}, h^{s_n}, \tau^{s_n}, s_n):=
        &(\triangle+\lambda_2^*r_2(x))z_2^{s_n} \\      
        &+ d_2\nabla\cdot[((p_1^{s_n}+ip_2^{s_n}) \psi_*+s_nz_2^{s_n}) \nabla(\sin{\omega}\psi_*+w_2(s))] \\
        &+d_2\nabla\cdot[(\sin{\omega}\psi_*+w_2(s))\nabla ((p_1^{s_n}+ip_2^{s_n})\psi_*+s_nz_2^{s_n})]e^{-s_nh^{s_n}\tau^{s_n}}  \\
        &-(\phi_*+s_nz_1^{s_n})a_{21}\lambda_{2s}(\sin{\omega}\psi_*+w_2(s)) \\
        &+\lambda_{2s}((p_1^{s_n}+ip_2^{s_n})\psi_*+s_nz_2^{s_n})[(\lambda_2'(0)+o(s))r_2(x)-h^{s_n}] \\
        &-((p_1^{s_n}+ip_2^{s_n})\psi_*+s_nz_2^{s_n})[a_{21}(\cos{\omega}\phi_*+w_1(s))\\
        &+2a_{22}(\sin{\omega}\psi_*+w_2(s))]\\   
        G_9(z_1^{s_n}, z_2^{s_n}, p_1^{s_n}, p_2^{s_n}, h^{s_n}, \tau^{s_n}, s_n):=
        &((p_1^{s_n})^2+(p_2^{s_n})^2-1)\|\psi_*\|^2_{Y_{\mathbb C}}+s_n^2\|z_1^{s_n}\|^2_{Y_{\mathbb C}}.
    \end{cases}
	\end{equation}
	and a continuously differentiable mapping from $(X_1)_{\mathbb C}^2\times\mathbb R^+\times \mathbb R^3$ to $Y_{\mathbb C}^2\times \mathbb R$ with 
	\[
	\hat{G}(z_1^{s_n}, z_2^{s_n}, p_1^{s_n}, p_2^{s_n}, h^{s_n}, \tau^{s_n}, s_n)=(G_7, G_8, G_9)^T=0.
	\]
	It is clear to see that $\{z_1^{s_n}\}$, $\{z_2^{s_n}\}$ are bounded in $Y_{\mathbb C}$ and $X_{\mathbb C}$, $\{p_1^{s_n}\}$, $\{p_2^{s_n}\}$ and $\{h^{s_n}\}$ are bounded for $s\in(0,\tilde\delta)$, which means that $\{(z_1^{s_n}, z_2^{s_n}, p_1^{s_n}, p_2^{s_n}, h^{s_n}, e^{-s_n\mathrm{Re}(h^{s_n})\tau^{s_n}}, e^{-s_n\mathrm{Im}(h^{s_n})\tau^{s_n}})\}$ is precompact in $Y_{\mathbb C}^2\times\mathbb R^5$. Thus, there also exists a subsequence $\{ (z_1^{s_{n_k}}, z_2^{s_{n_k}}, p_1^{s_{n_k}}, p_2^{s_{n_k}}, h^{s_{n_k}}, e^{-s_{n_k}\mathrm{Re}(h^{s_{n_k}})\tau^{s_{n_k}}}, \\e^{-s_{n_k}\mathrm{Im}(h^{s_{n_k}})\tau^{s_{n_k}}})\}_{k=1}^{\infty}$ satisfies $s_{n_k}\to 0$ as $k\to\infty$, and
	\[
	(z_1^{s_{n_k}}, z_2^{s_{n_k}}, p_1^{s_{n_k}}, p_2^{s_{n_k}}, h^{s_{n_k}}, e^{-s_{n_k}\mathrm{Re}(h^{s_{n_k}})\tau^{s_{n_k}}}, e^{-s_{n_k}\mathrm{Im}(h^{s_{n_k}})\tau^{s_{n_k}}})\to (\hat z_1, \hat z_2, \hat p_1, \hat p_2, \hat h, \hat\rho, e^{-i\hat\theta}),
	\]
	where $(\hat z_1, \hat z_2, \hat p_1, \hat p_2, \hat h, \hat\rho, \hat\theta)\in Y_{\mathbb C}^2\times\mathbb R^2\times\mathbb C\times [0,1]\times [0,2\pi]$ with $\mathrm{Re}(\hat h)\ge 0$. Then, we take $k\to\infty$ on the equation $\mathcal N(\lambda_*)^{-1}\hat G(z_1^{s_{n_k}}, z_2^{s_{n_k}}, p_1^{s_{n_k}}, p_2^{s_{n_k}}, h^{s_{n_k}}, \tau^{s_{n_k}}, s_{n_k})=0$, and obtain $(\hat z_1, \hat z_2)\in (X_1)_{\mathbb C}^2$ satisfying
	\begin{equation}\label{3.66}
    \begin{cases}
        0=& (\triangle+\lambda_1^*r_1(x))\hat z_1+ d_1[\nabla\cdot(\phi_*\nabla(\cos{\omega}\phi_*))+\nabla\cdot(\cos{\omega}\phi_*\nabla \phi_*)\hat\rho e^{-i\hat\theta}] \\
        &+\phi_*(\lambda_1'(0)r_1(x)-\hat h)-\lambda_1^*(2a_{11}\cos{\omega}\phi_*^2+a_{12}\sin{\omega}\phi_*\psi_*)-(\hat p_1+i\hat p_2)\lambda_1^*a_{12}\cos{\omega}\phi_*\psi_* , \\
        0=& (\triangle+\lambda_2^*r_2(x))\hat z_2+ d_2(\hat p_1+i\hat p_2)[\nabla\cdot(\psi_*\nabla(\sin{\omega}\psi_*))+\nabla\cdot(\sin{\omega}\psi_*\nabla \psi_*)\hat\rho e^{-i\hat\theta}] \\
        &+(\hat p_1+i\hat p_2)\psi_*(\lambda_2'(0)r_2(x)-\hat h)-a_{21}\lambda_2^*\sin{\omega}\phi_*\psi_*-\lambda_2^*(\hat p_1+i\hat p_2)(a_{21}\cos{\omega}\phi_*\psi_*+2a_{22}\sin{\omega}\psi_*^2) , \\
        0=& (\hat p_1)^2+(\hat p_2)^2-1.
    \end{cases}
	\end{equation}
	Taking the inner products of the first and the second equations of \eqref{3.66} with $\phi_*$ and $\psi_*$, respectively, gives that
	\begin{equation}\label{3.67}
    \begin{cases}
        d_1\kappa_1\hat\rho e^{-i\hat\theta}-K_1-K_4(\hat p_1+i\hat p_2)=\hat h, \\
        d_2(\hat p_1+i\hat p_2)\kappa_2\hat\rho e^{-i\hat\theta}-K_2(\hat p_1+i\hat p_2)-K_3 =(\hat p_1+i\hat p_2)\hat h.
    \end{cases}
    \end{equation}
    Separating the real and imaginary parts of two equations for \eqref{3.36}, we obtain the real parts
    \begin{equation}\label{3.68}
    \begin{cases}
        d_1\kappa_1\hat\rho\cos{\hat\theta}-K_1-K_4p_1^*=\mathrm{Re}(h^*), \\
        d_2\kappa_2\hat\rho\cos{\hat\theta}-K_2-K_3p_1^*=\mathrm{Re}(h^*).
    \end{cases}
    \end{equation}
    Hence, 
    \[
    \mathrm{Re}(h^*)=\dfrac{(d_1\kappa_1+d_2\kappa_2)\hat\rho\cos{\hat\theta}-(K_1+K_2)-(K_3+K_4)\hat p_1}{2}<0,
    \]
    which is a contradiction to $\mathrm{Re}(h^*)\ge 0$. The proof is completed.
\end{proof}

Based on the above discussion, we conclude the following results associated with the stability of the positive steady state $(u_s, v_s)$ and the Hopf bifurcation near $(u_s, v_s)$.
\begin{theorem}
	Under the assumptions $\mathbf{(H_0)}$ and $\mathbf{(H_1)}$, the following results hold:
	\begin{enumerate}
		\item[(i)] If $\mathbf{(H_2)}$ and $\mathbf{(H_5)}$holds, then the positive steady-state solution $(u_s, v_s)$ is locally asymptotically stable when $\tau\in[0, \tau_0)$, and $(u_s, v_s)$ is unstable when $\tau\in(\tau_0, \infty)$, $\tau_0$ is defined in \eqref{3.49}. Moreover, a Hopf bifurcation occurs at $(u_s, v_s)$ when $\tau=\tau_n$, $n=0, 1, 2, \dots$.
		\item[(i)] If $\mathbf{(H_3)}$ and $\mathbf{(H_6)}$holds, then the positive steady-state solution $(u_s, v_s)$ is locally asymptotically stable when $\tilde \tau\in[0, \tilde \tau_0)$, and $(u_s, v_s)$ is unstable when $\tau\in(\tilde \tau_0, \infty)$, $\tilde \tau_0$ is defined in \eqref{60}. Moreover, a Hopf bifurcation occurs at $(u_s, v_s)$ when $\tau=\tilde \tau_n$, $n=0, 1, 2, \dots$.
		\item[(iii)] If $(d_1, d_2)\in D_2$, then the positive steady-state solution $(u_s, v_s)$ is locally asymptotically stable when $\tau\ge 0$.
	\end{enumerate}
\end{theorem}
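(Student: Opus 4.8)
The plan is to read off all three statements by patching together Theorems~\ref{thm7}, \ref{thm9}, \ref{th3.9} and \ref{th3.10} with one continuity-and-counting argument for the spectrum of $A_\tau(s)$ as $\tau$ increases, with $s$ fixed in the appropriate interval $(0,\bar\delta)$ or $(0,\tilde\delta)$ so that the branches of Theorems~\ref{th3.5} and \ref{th3.9} are available.

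First I would supply the ingredient that is not yet stated for parts~(i)--(ii): local asymptotic stability at $\tau=0$. Under $\mathbf{(H_2)},\mathbf{(H_5)}$ (resp.\ $\mathbf{(H_3)},\mathbf{(H_6)}$) I would repeat the $\tau=0$ contradiction argument from the proof of Theorem~\ref{th3.10}: take $s_i\to0$ with eigenvalues $\mu^{s_i}=s_ih^{s_i}$, $\mathrm{Re}\,h^{s_i}\ge0$; rescale the eigenfunctions as in \eqref{3.58}; extract a precompact subsequence using Lemmas~\ref{lemma3.1}--\ref{lemma3.3} and the bounded inverse of $\mathcal L_\lambda$; pass to the limit and pair with $\phi_*,\psi_*$ to force $p_2^*=0$, $p_1^*=1$ and $\mathrm{Re}(h^*)=\tfrac12\big[(d_1\kappa_1+d_2\kappa_2)-(K_1+K_2)-(K_3+K_4)\big]$. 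The genuine point here is that this number is \emph{negative} on $D_3^1$ under $\mathbf{(H_5)}$ (resp.\ on $D_3^2$ under $\mathbf{(H_6)}$) --- a direct inequality in $d_1,d_2,\kappa_i,K_i$ --- which then gives $\mathrm{Re}\,\mu<0$ for all eigenvalues of $A_0(s)$ after possibly shrinking $\bar\delta$ (resp.\ $\tilde\delta$).

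With $\tau=0$ stability in place, (i) follows from the structure of the spectrum: by Theorem~\ref{thm7} the only $\tau>0$ at which $A_\tau(s)$ has an eigenvalue on the imaginary axis are $\tau=\tau_n=(\theta^s+2n\pi)/\nu^s$, where that eigenvalue is the pair $\pm i\nu^s$; by Theorem~\ref{thm9}(i) it is algebraically simple, and by Theorem~\ref{thm9}(ii) it crosses with $\mathrm{Re}(\mathrm d\mu/\mathrm d\tau)(\tau_n)>0$. Since the spectrum depends continuously on $\tau$ and lies in a half-plane $\{\mathrm{Re}\,\mu\le C\}$ with only finitely many eigenvalues in any right half-plane, no eigenvalue can reach the imaginary axis for $\tau\in[0,\tau_0)$, so $(u_s,v_s)$ is locally asymptotically stable there; and since at each $\tau_n$ exactly two eigenvalues cross transversally from left to right, the number of eigenvalues with positive real part is $0$ on $[0,\tau_0)$ and strictly positive for all $\tau>\tau_0$, so $(u_s,v_s)$ is unstable there. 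Finally $ik\nu^s$ is not an eigenvalue at $\tau_n$ for $k\in\mathbb Z\setminus\{\pm1\}$ (else $(k\nu^s,s,\tau_n)$ would solve \eqref{3.24}, contradicting the ``if and only if'' of Theorem~\ref{thm7}), so with the simple purely imaginary pair and the transversality condition the Hopf bifurcation theorem for the memory-based diffusion semiflow of \cite{SWW2021} applies and yields a branch of nonconstant periodic solutions at each $\tau=\tau_n$. Part~(ii) is identical after replacing $\mathbf{(H_2)},\mathbf{(H_5)},\tau_n$ by $\mathbf{(H_3)},\mathbf{(H_6)},\tilde\tau_n$ and Theorems~\ref{thm7},\ref{thm9} by Theorem~\ref{th3.9}(ii)--(iii). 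Part~(iii) is immediate from Theorem~\ref{th3.10}, which says that for $(d_1,d_2)\in D_2$ all eigenvalues of $A_\tau(s)$ have negative real part for every $\tau\ge0$ and $s\in(0,\tilde\delta)$.

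The main obstacle is the global monotonicity ``once unstable, always unstable'': turning the local transversality of Theorems~\ref{thm9} and \ref{th3.9} into this requires ruling out an eigenvalue crossing back into the left half-plane, which uses the completeness of the characterization in Theorem~\ref{thm7} (no purely imaginary eigenvalue other than $\pm i\nu^s$, and only at $\tau=\tau_n$) together with the positivity of $\mathrm{Re}(\mathrm d\mu/\mathrm d\tau)$ at \emph{every} $\tau_n$. The only other nontrivial verification is the sign of $\mathrm{Re}(h^*)$ at $\tau=0$ on the appropriate piece of $D_3$; all remaining work is bookkeeping over the already-established lemmas and theorems.
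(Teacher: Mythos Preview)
Your plan is correct and matches the paper's approach: the paper gives no separate proof, stating this theorem immediately after Theorem~\ref{th3.10} as a direct summary (``Based on the above discussion, we conclude the following results'') of Theorems~\ref{th3.4}--\ref{th3.10}. Your eigenvalue-counting assembly is exactly what is intended, and you are in fact more careful than the paper in isolating the $\tau=0$ stability under $\mathbf{(H_2)},\mathbf{(H_5)}$ (resp.\ $\mathbf{(H_3)},\mathbf{(H_6)}$), which the paper proves only for $(d_1,d_2)\in D_2$ in Theorem~\ref{th3.10} and otherwise leaves implicit.
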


\begin{figure}[htb]
  \centering
  \includegraphics[width=0.35\textwidth]{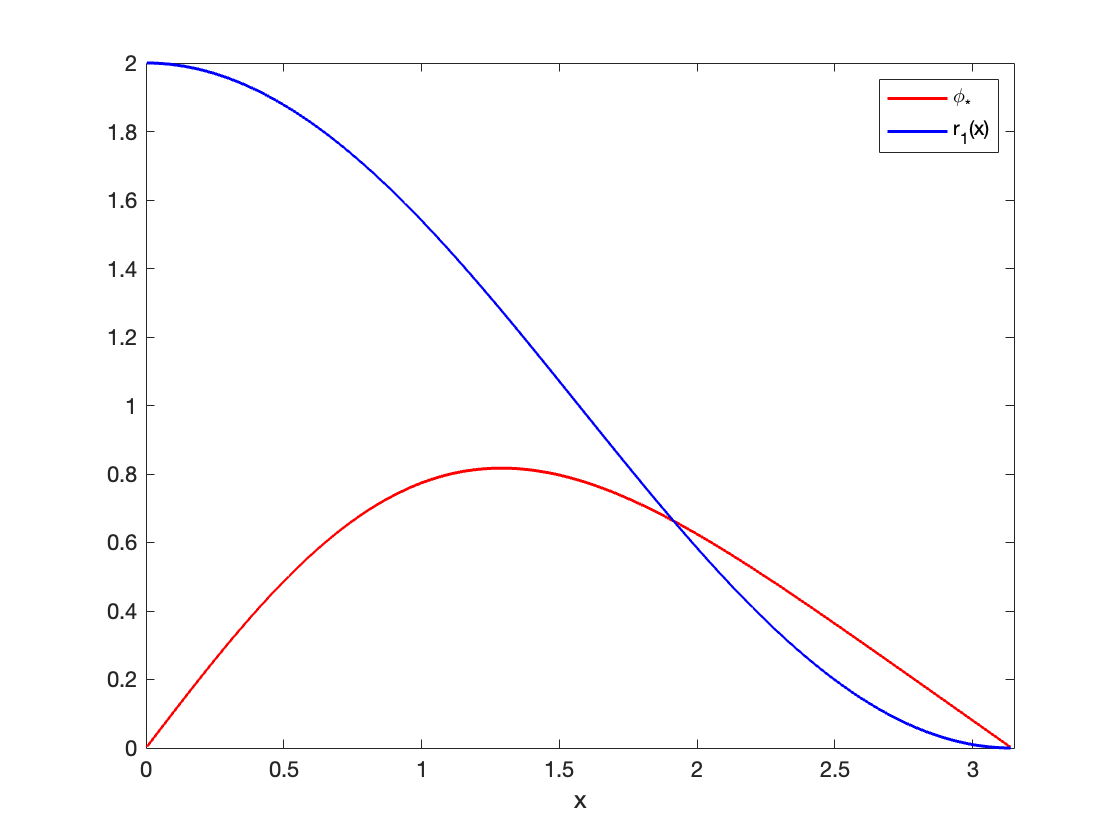}
  \includegraphics[width=0.35\textwidth]
  {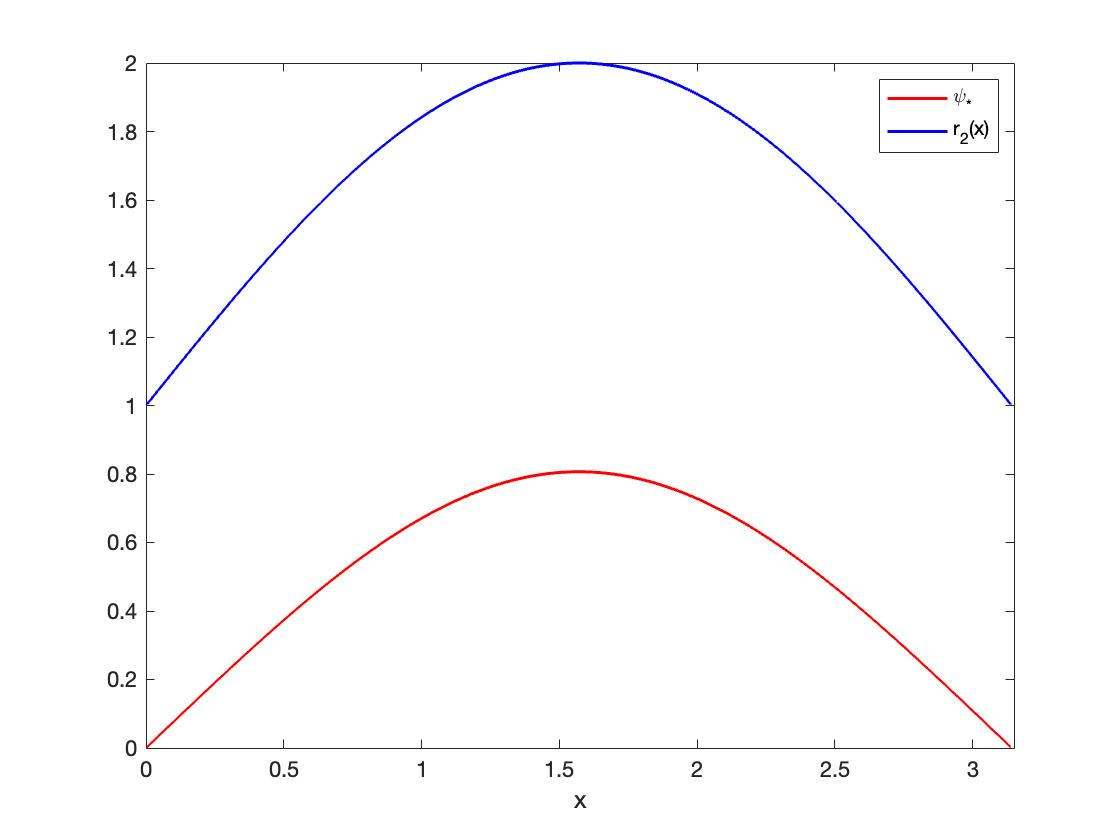}
  \caption{Left: the resource distributed function $r_1(x)=\cos{x}+1$, and the principle eigenvalue function $\phi_*(x)$ of \eqref{2.3}; Right: the resource distributed function $r_2(x)=\sin{x}+1$, and the principle eigenvalue function $\psi_*(x)$ of \eqref{2.4} }
  \label{fig_r}
\end{figure}
\section{Numerical simulations}
In this section, we show numerical simulations to verify our results in sections 2-3. We consider a spatial domain $\Omega=(0,\pi)$. We choose the functions:
\[
r_1(x)=\cos{x}+1, \quad r_2(x)=\sin{x}+1,
\]
onto the eigenvalue problems \eqref{2.3} and \eqref{2.4}. And then, the principle eigenvalues are $\lambda_1^*=0.9291$, $\lambda_2^*=0.5403$ and their corresponding eigenvalue functions $\phi_*(x)$ and $\psi_*(x)$ in Figure 2, respectively.

\begin{figure}[htbp]
\centering
\begin{tikzpicture}
    \begin{axis}[
        axis lines=middle,
        xmin=-2, xmax=2,
        ymin=-2, ymax=2,
        width=8cm, height=8cm,
        grid=none,
        xtick=\empty, 
        ytick=\empty,
        xlabel={$d_1$}, 
        ylabel={$d_2$},
        domain=-3:3, 
        enlargelimits=true, 
        samples=100 
    ]  
    \addplot [
            pattern=north west lines, 
            pattern color=olive, 
            opacity=0.5, 
        ]
        coordinates {(1/4, 3/4) (5/12, 7/12) (5/3, 5/6) (3, 1.5) (3, 3) (1, 3)} ;
    \addplot [
            pattern=north west lines, 
            pattern color=olive, 
            opacity=0.5, 
        ]
        coordinates {(-1/4, -3/4) (-5/12, -7/12) (-5/3, -5/6) (-3, -1.5) (-3, -3) (-1, -3)} ;  
    \addplot [
    		pattern=north east lines, 
            pattern color=c1, 
            opacity=0.5, 
        ]
        coordinates {(3,-2) (3,-3) (2,-3) (-3,2) (-3,3) (-2,3)} ;   
        \addplot[gray, thick] {-1*x+1};
        \addplot[gray, thick] {-1*x-1};   
        \addplot[gray, thick] {0.5*x};
        \addplot[gray, thick] {3*x};
        \addplot[gray, thick] {0.2*x + 0.5};
        \addplot[gray, thick] {0.2*x - 0.5};
        \node at (axis cs:-2, -0.5) [fill=none, text=black] {$D_1$};
        \node at (axis cs:-2, 2) [fill=none, text=black] {$D_2$};
        \node at (axis cs:2, 2) [fill=none, text=black] {$\mathbf{(H_5)}$};
        \node at (axis cs:-2, -2) [fill=none, text=black] {$\mathbf{(H_5)}$};
        \fill[red] (axis cs:1, -1) circle (1.5pt);
        \node at (axis cs:1, -1) [anchor=south east, fill=none, text=black] {$P_1$};
        \fill[red] (axis cs:0.2, 0.3) circle (1.5pt);
        \node at (axis cs:0.2, 0.3) [anchor=south east, fill=none, text=black] {$P_2$};
        \fill[red] (axis cs:1, 1.5) circle (1.5pt);
        \node at (axis cs:1, 1) [anchor=south east, fill=none, text=black] {$P_3$};
    \end{axis}
    \node at (4.5, 1) [fill=none, text=black] {$l_1$};
    \node at (6, 1) [fill=none, text=black] {$l_2$};
    \node at (0, 2) [fill=none, text=black] {$l_3$};
	\node at (0, 3) [fill=none, text=black] {$l_4$};
    \node at (6, 5) [fill=none, text=black] {$l_5$};
	\node at (4.5, 6) [fill=none, text=black] {$l_6$};
\end{tikzpicture}
\caption{The regions of $(d_1, d_2)$ satisfying $\mathbf{(H_5)}$ based on Figure \ref{f1.1}, where $l_5: d_2=\frac{K_2\kappa_1}{K_1\kappa_2}d_1$, $l_6: d_2=\frac{(K_2-K_3)\kappa_1}{(K_1-K_4)\kappa_2}d_1$.}
\label{f1.2}
\end{figure}
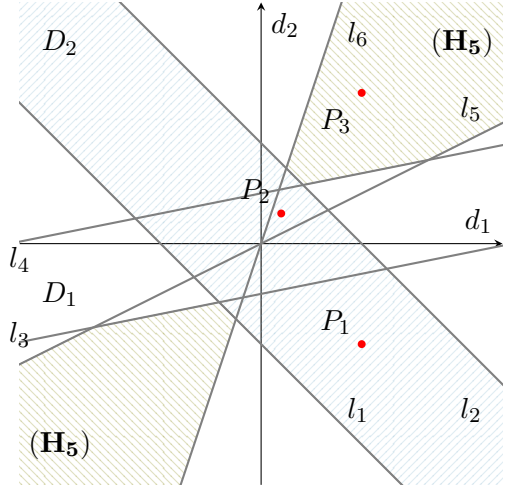

we first choose the following parameters satisfying $(\mathbf{H_2})$,
\begin{enumerate}
	\item[$(\mathbf{Q_1})$] $a_{11}=0.5$, $a_{12}=0.5$, $a_{21}=1$, $a_{22}=1.5$, $\omega=\dfrac{\pi}{4}$, $\lambda_1=2$, $\lambda_2=2$.
\end{enumerate}

In Figure \ref{fig_P_1_2}, we calculate and obtain that
\[
l_1: d_2=-1.0622d_1-0.9050, \quad
l_2: d_2=-1.0622d_1+0.9050, \quad
l_5: d_2=1.8466d_1, 
\]
\[
l_3: d_2=1.2379d_1-0.1827, \quad
l_4: d_2=1.2379d_1+0.1827, \quad
l_6: d_2=30.0015d_1.
\]
Then, we select two points $P_1=(1,-1)$, $P_2=(0.1,0.5)$ in the region $D_2$ and $P_3=(1,3)$. It is worth noting that if both $d_1>0$ and $d_2>0$ are small, then the positive steady state $(u_s, v_s)$ is locally asymptotically stable for $\tau\ge 0$; but if $d_1>0$ and $d_2>0$ are relatively large, then the stability of the positive steady state $(u_s, v_s)$ will be changed, there exists a critical value $\tau_0$, such that $(u_s, v_s)$ is locally asymptotically stable for $\tau\in[0,\tau_0]$ and loses the stability for $\tau\in[\tau_0, \infty]$, which implies that a family of spatially non-homogeneous time-periodic solutions occur, shown in Figure \ref{fig_P_1_2} and Figure \ref{fig_P_3}.

\begin{figure}[htb]
	\centering
	\subfloat[$P_1(d_1, d_2)=(1,-1)\in D_2$ with $\tau=10$]{
	\includegraphics[width=0.35\linewidth]{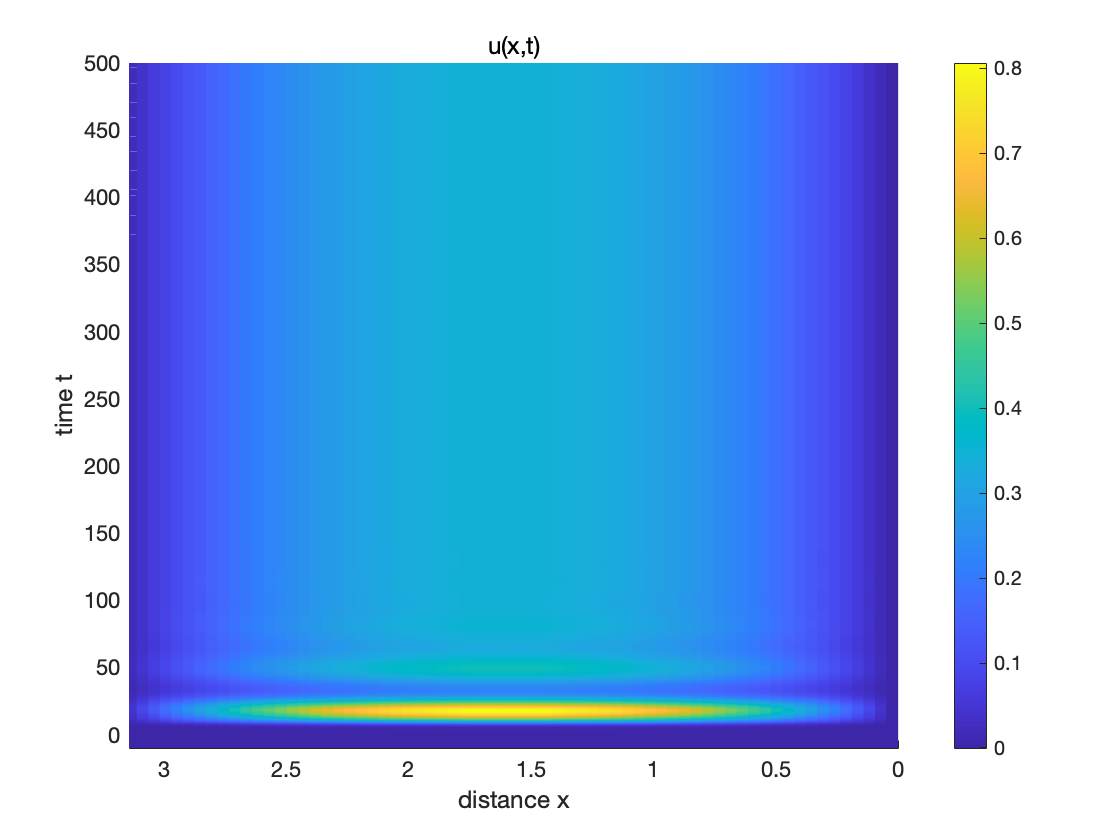}
	\includegraphics[width=0.35\textwidth]{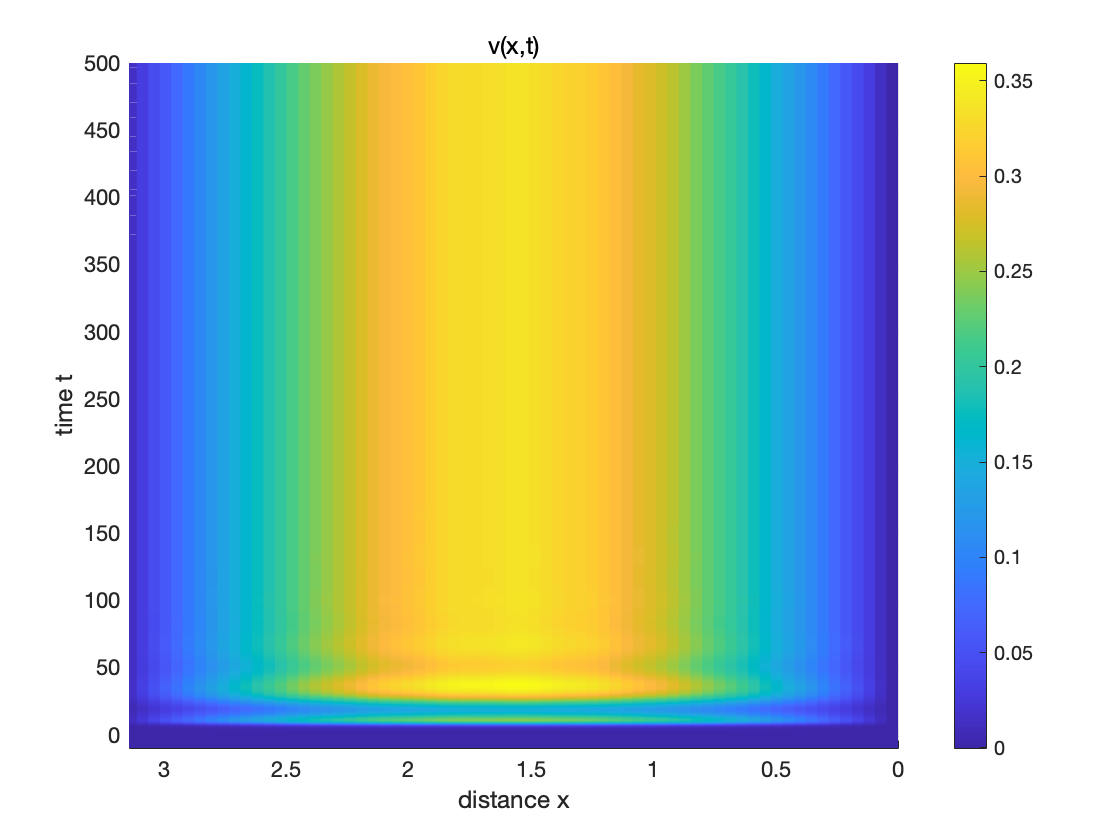}
	}
	\hfill
	\subfloat[$P_2(d_1, d_2)=(0.1,0.5)\in D_2$ with $\tau=10$]{
	\includegraphics[width=0.35\linewidth]{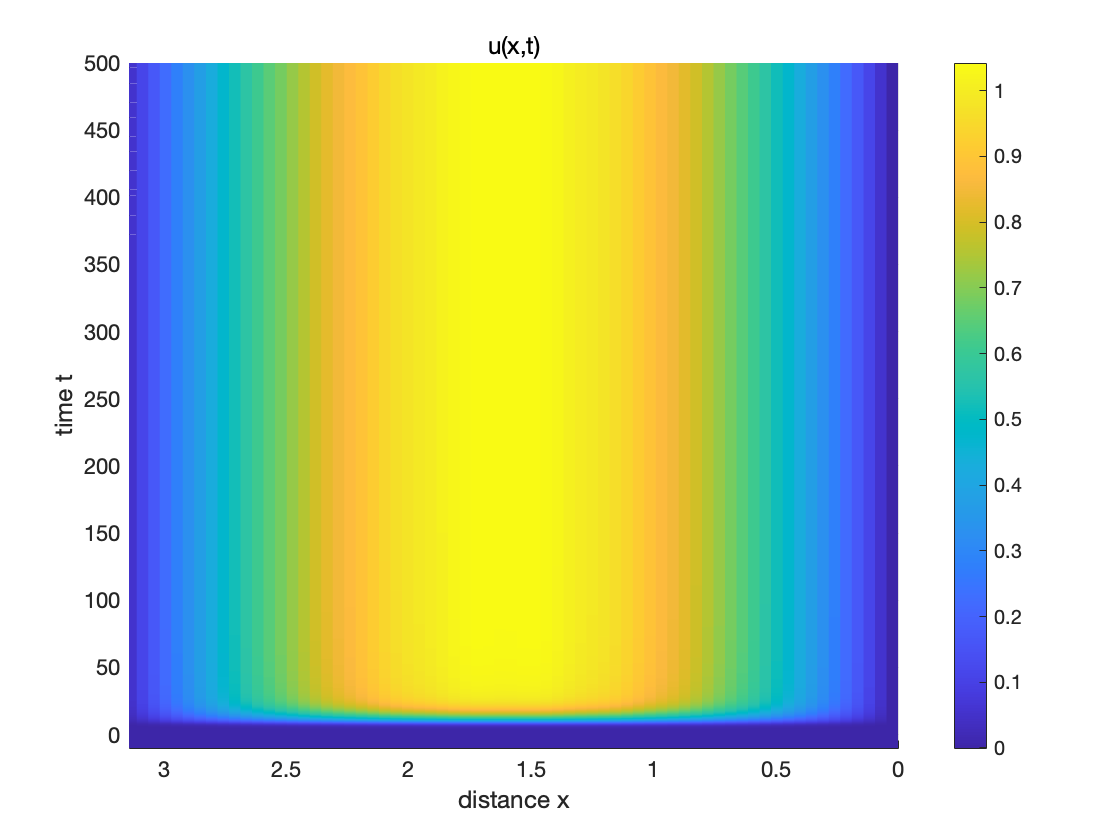}
	\includegraphics[width=0.35\textwidth]{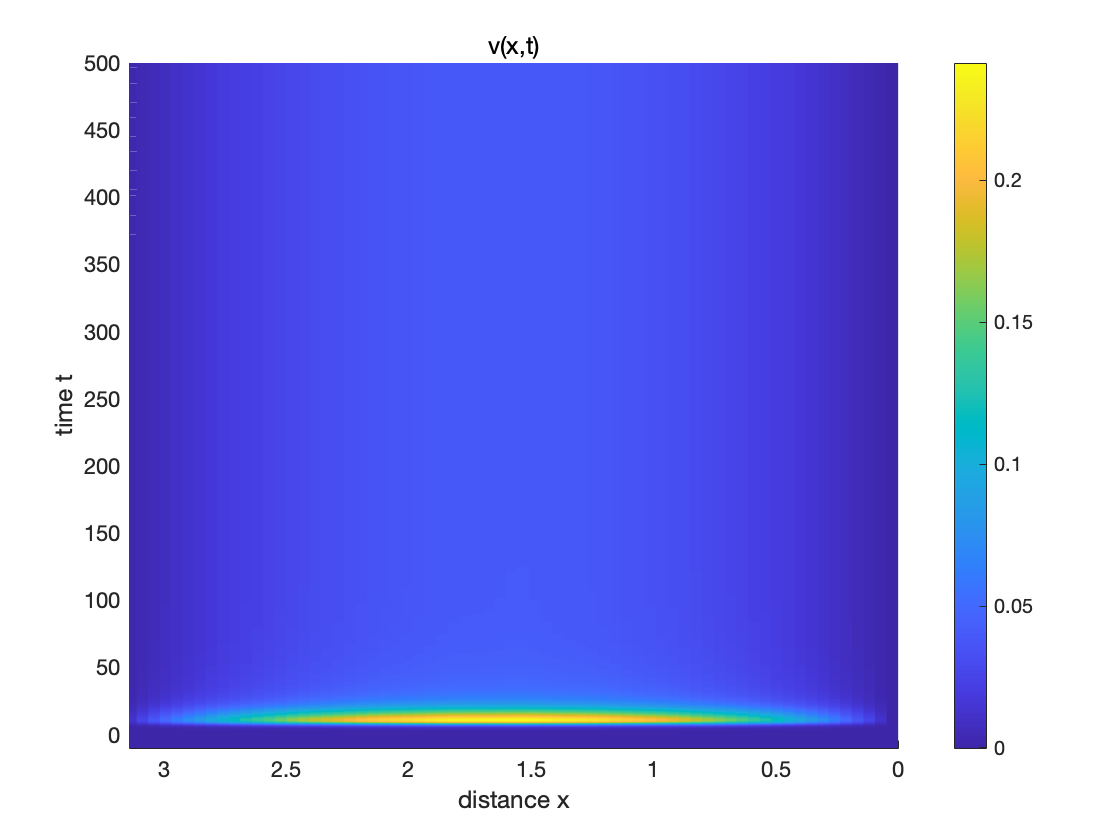}
	}
	\hfill
	\caption{(a) Numerical simulations of \eqref{1} for $P_1\in D_2\cap (\mathbb R^+\times\mathbb R^-)$, the positive steady state $(u_s, v_s)$ is locally asymptotically stable for $\tau\ge 0$; (b) Numerical simulations of \eqref{2.6} for $P_2\in D_2\cap (\mathbb R^+\times\mathbb R^+)$, the positive steady state $(u_s, v_s)$ is locally asymptotically stable for $\tau\ge 0$.}
  \label{fig_P_1_2}
\end{figure}

\begin{figure}[htb]
  \centering
  \subfloat[$P_3(d_1, d_2)=(1,3)\in D_3$ satisfying $\mathbf{(H_5)}$ with $\tau=4$]{
  \includegraphics[width=0.35\linewidth]{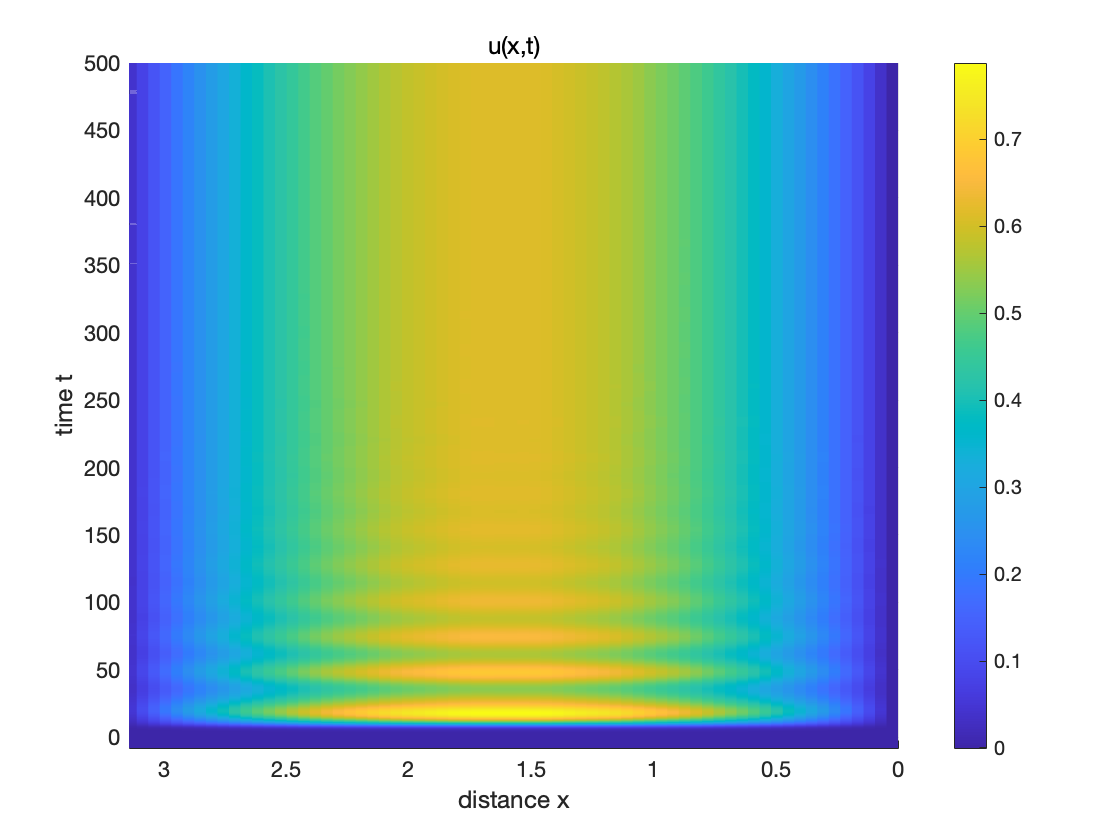}
  \includegraphics[width=0.35\textwidth]{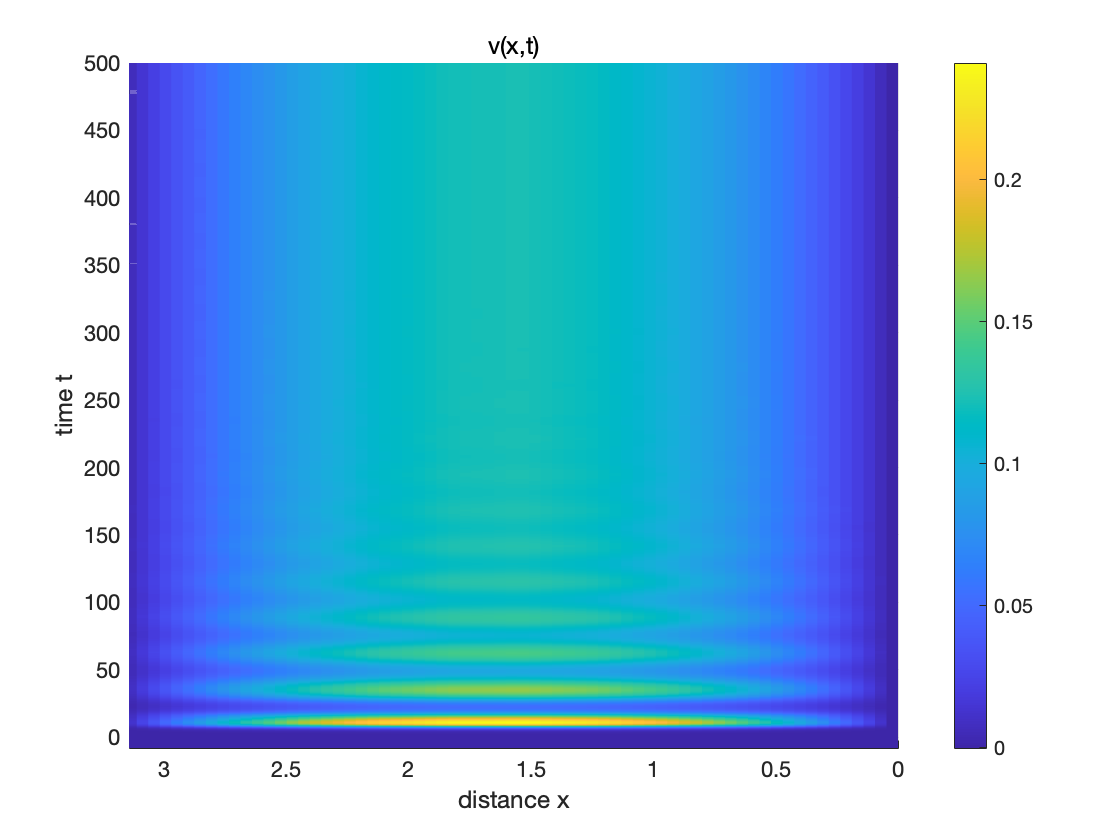}
  }
	\hfill
  \subfloat[$P_3(d_1, d_2)=(1,3)\in D_3$ satisfying $\mathbf{(H_5)}$ with $\tau=10$]{
  \includegraphics[width=0.35\linewidth]{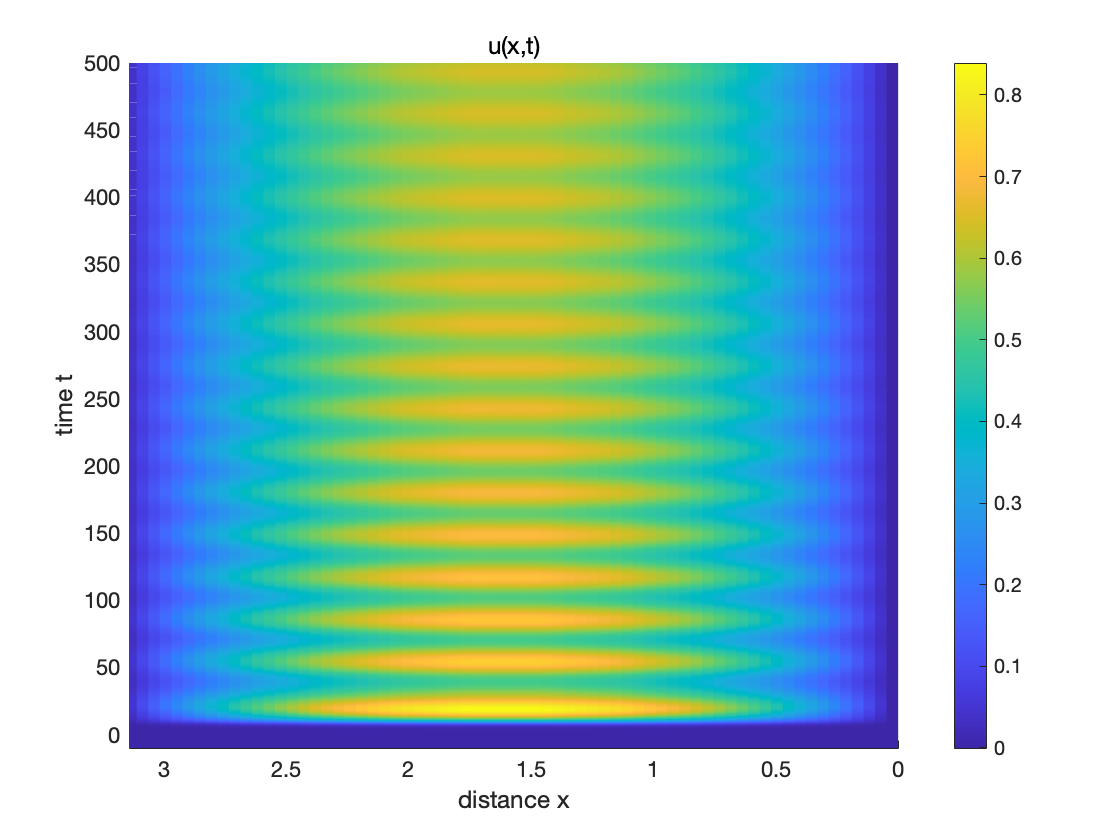}
  \includegraphics[width=0.35\textwidth]{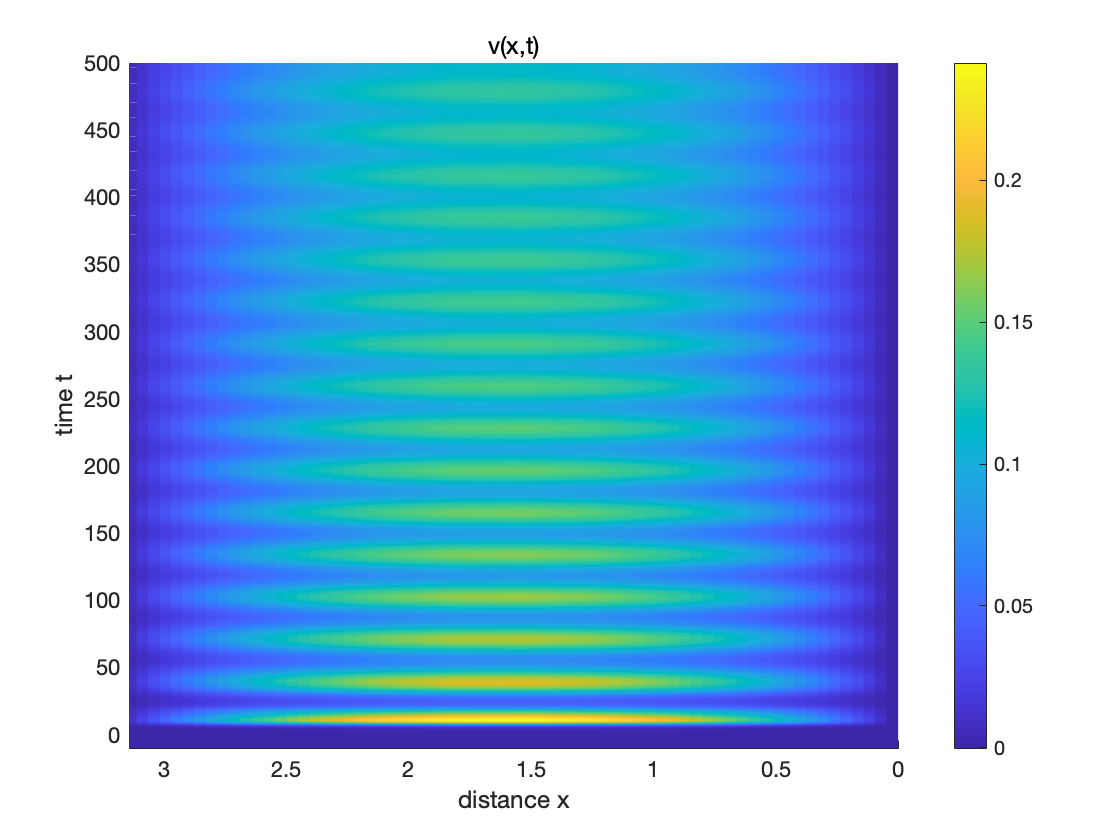}
  }
	\hfill
  \caption{Numerical simulations of \eqref{2.6} for $P_3\in D_3$ satisfying $\mathbf{(H_5)}$, (a) $\tau=4<\tau_0=4.6458$, the positive steady state $(u_s, v_s)$ is locally asymptotically stable; (b) $\tau=10>\tau_0=4.6458$, the positive steady state $(u_s, v_s)$ converges to a spatially non-homogeneous time-periodic solution.}
  \label{fig_P_3}
\end{figure}

Next, we consider the dynamics of weak competition and choose the following parameters satisfying $(\mathbf{H_3})$,
\begin{enumerate}
	\item[$(\mathbf{Q_2})$] $a_{11}=1$, $a_{12}=0.5$, $a_{21}=0.8$, $a_{22}=1$, $\omega=\dfrac{\pi}{4}$, $\lambda_1=2$, $\lambda_2=2$.
\end{enumerate}

\begin{figure}[htbp]
\centering
\begin{tikzpicture}
    \begin{axis}[
        axis lines=middle,
        xmin=-2, xmax=2,
        ymin=-2, ymax=2,
        width=8cm, height=8cm,
        grid=none,
        xtick=\empty, 
        ytick=\empty,
        xlabel={$d_1$}, 
        ylabel={$d_2$},
        domain=-3:3, 
        enlargelimits=true, 
        samples=100 
    ]  
    \addplot [
            pattern=north west lines, 
            pattern color=olive, 
            opacity=0.6, 
        ]
        coordinates {(1, 1) (2/3, 1/3) (5/6, 1/6) (3, 0.6) (3, 3) (3, 3)} ;
    \addplot [
            pattern=north west lines, 
            pattern color=olive, 
            opacity=0.6, 
        ]
        coordinates {(-1, -1) (-2/3, -1/3) (-5/6, -1/6) (-3, -0.6) (-3, -3) (-3, -3)} ;
    \addplot [
    		pattern=north east lines, 
            pattern color=c1, 
            opacity=0.5, 
        ]
        coordinates {(3,-2) (3,-3) (2,-3) (-3,2) (-3,3) (-2,3)} ;   
        \addplot[gray, thick] {-1*x+1};
        \addplot[gray, thick] {-1*x-1};    
        \addplot[gray, thick] {x};
        \addplot[gray, thick] {0.2*x};
        \addplot[gray, thick] {2*x + 1};
        \addplot[gray, thick] {2*x - 1}; 
        \node at (axis cs:-1, -2) [fill=none, text=black] {$D_1$};
        \node at (axis cs:-2, 2) [fill=none, text=black] {$D_2$};
        \node at (axis cs:2, 1) [fill=none, text=black] {$\mathbf{(H_6)}$};
        \node at (axis cs:-2, -1) [fill=none, text=black] {$\mathbf{(H_6)}$};
        \fill[red] (axis cs:1.5, 0.7) circle (1.5pt);
        \node at (axis cs:1.5, 0.7) [anchor=south east, fill=none, text=black] {$P_4$};
    \end{axis}
    \node at (4.5, 1) [fill=none, text=black] {$l_1$};
    \node at (6, 1) [fill=none, text=black] {$l_2$};
    \node at (2, 0) [fill=none, text=black] {$l_3$};
	\node at (1, 0) [fill=none, text=black] {$l_4$};
    \node at (6, 4) [fill=none, text=black] {$l_5$};
	\node at (6, 6) [fill=none, text=black] {$l_6$};
\end{tikzpicture}
\caption{The regions of $(d_1, d_2)$ satisfying $\mathbf{(H_6)}$ based on Figure \ref{f1.1}, where $l_5: d_2=\frac{K_2\kappa_1}{K_1\kappa_2}d_1$, $l_6: d_2=\frac{(K_2+K_3)\kappa_1}{(K_1+K_4)\kappa_2}d_1$.}
\label{f1.3}
\end{figure}

By calculating, we obtain 
\[
l_1: d_2=-1.0622d_1-1.0101, \quad
l_2: d_2=-1.0622d_1+1.0101, \quad
l_5: d_2=0.6155d_1, 
\]
\[
l_3: d_2=0.9903d_1-0.2249, \quad
l_4: d_2=0.9903d_1+0.2249, \quad
l_6: d_2=0.7387d_1,
\]
in Figure \ref{fig_P_4}, we select the point $P_4=(2,1.4)$, it is similar to the case $(\mathbf{H_2})$ for the results. There are two results worth noting, one being that when $d_1>0$ and $d_2>0$ are small, the positive steady state $(u_s, v_s)$ is locally asymptotically stable for any $\tau\ge 0$, but when $d_1$ and $d_2$ are relatively large and their ratio keep in a range, the positive steady state $(u_s, v_s)$ will lose its stability, Hopf bifurcation occurs, shown in Figure. 7. Another is that when $r_1(x)\neq r_2(x)$, there might Hopf bifurcation occurs when $\tau$ increases under weak competition, which is different from the result in Wang et al. \cite{WW2023}, they show that when $r_1(x)= r_2(x)$, then the positive steady state is always locally asymptotically stable for $\tau\ge 0$.

\begin{figure}[htbp]
  \centering
  \subfloat[$P_4(d_1, d_2)=(2,1.4)\in D_3$ satisfying $\mathbf{(H_6)}$ with $\tau=3$]{
  \includegraphics[width=0.35\linewidth]{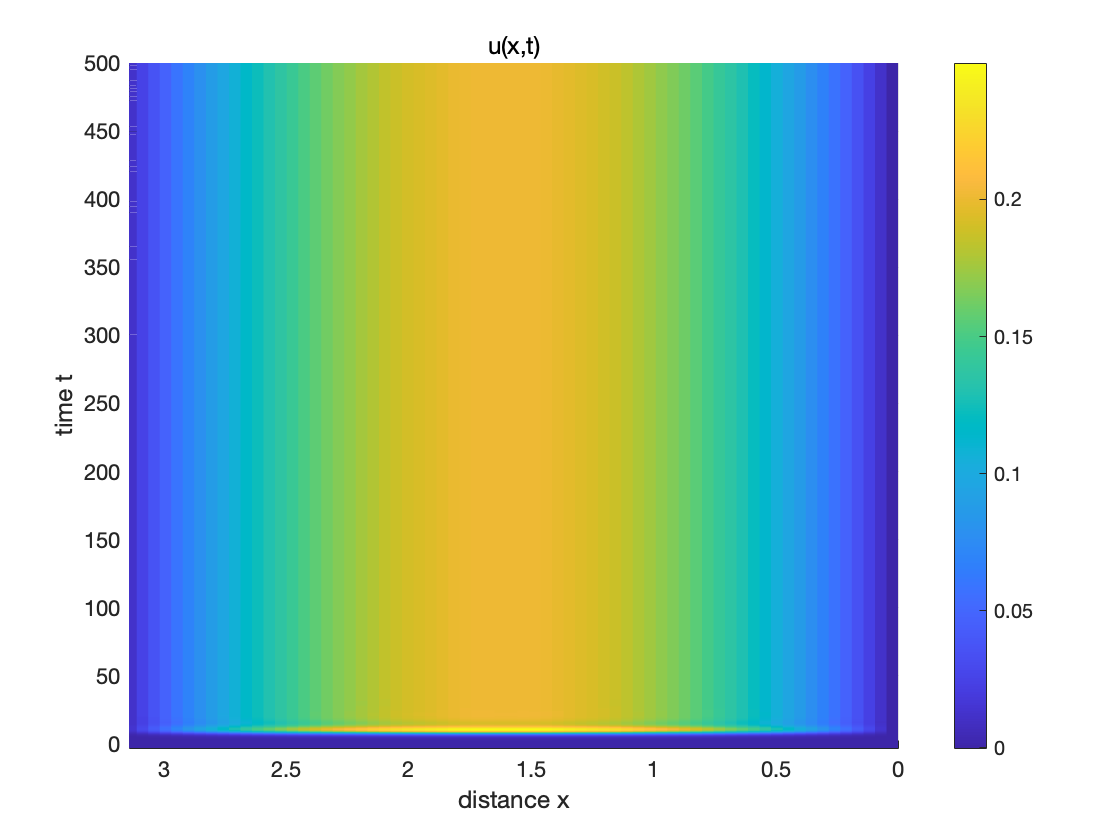}
  \includegraphics[width=0.35\textwidth]{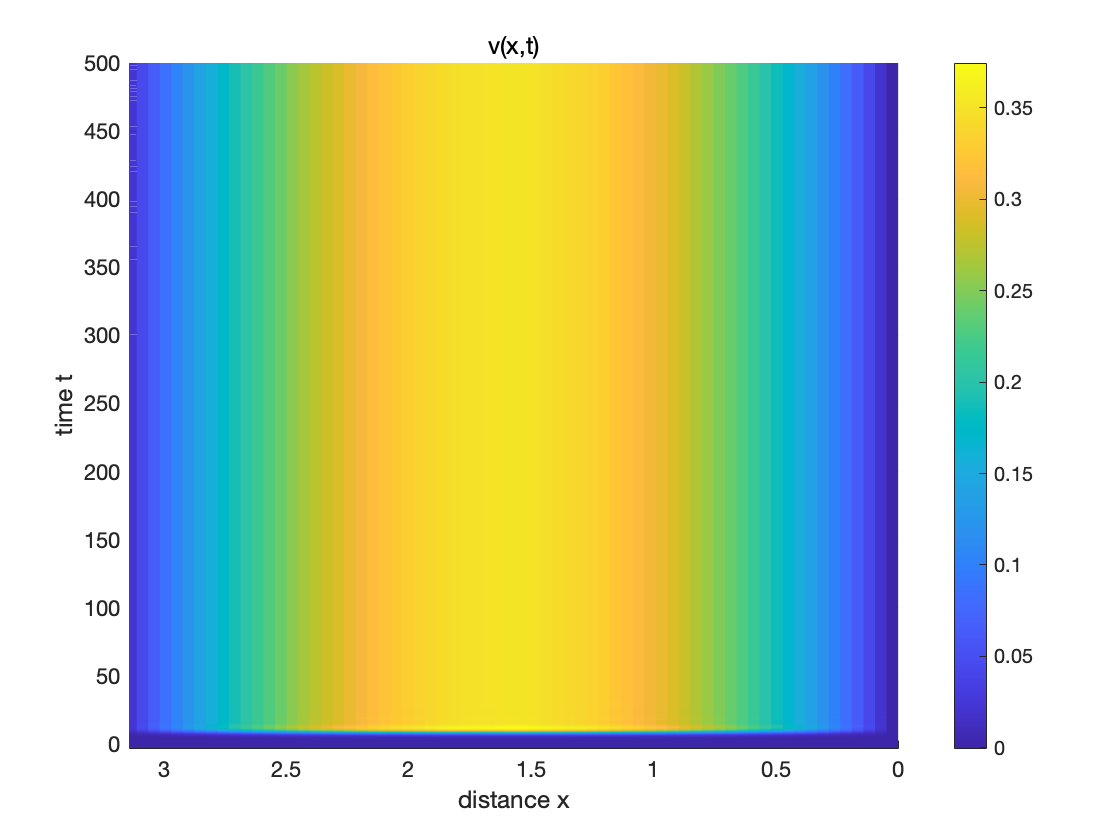}
  }
	\hfill
  \subfloat[$P_4(d_1, d_2)=(2,1.4)\in D_3$ satisfying $\mathbf{(H_6)}$ with $\tau=17$]{
  \includegraphics[width=0.35\linewidth]{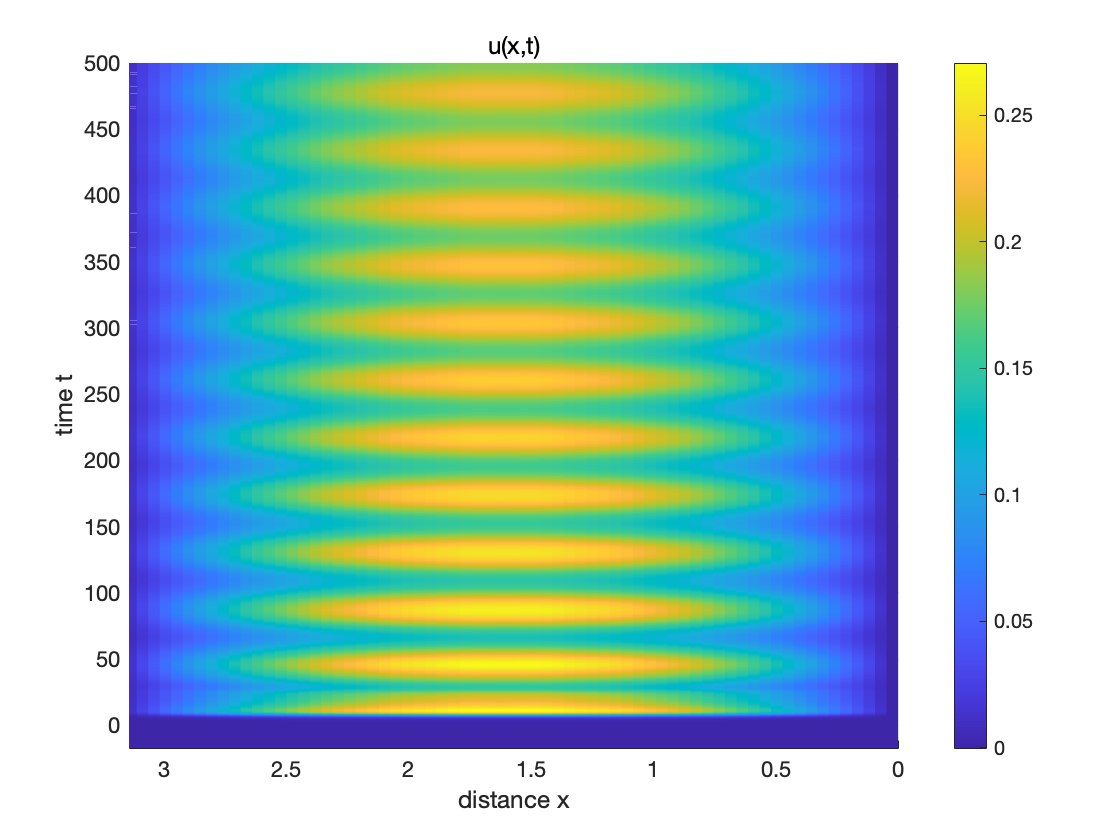}
  \includegraphics[width=0.35\textwidth]{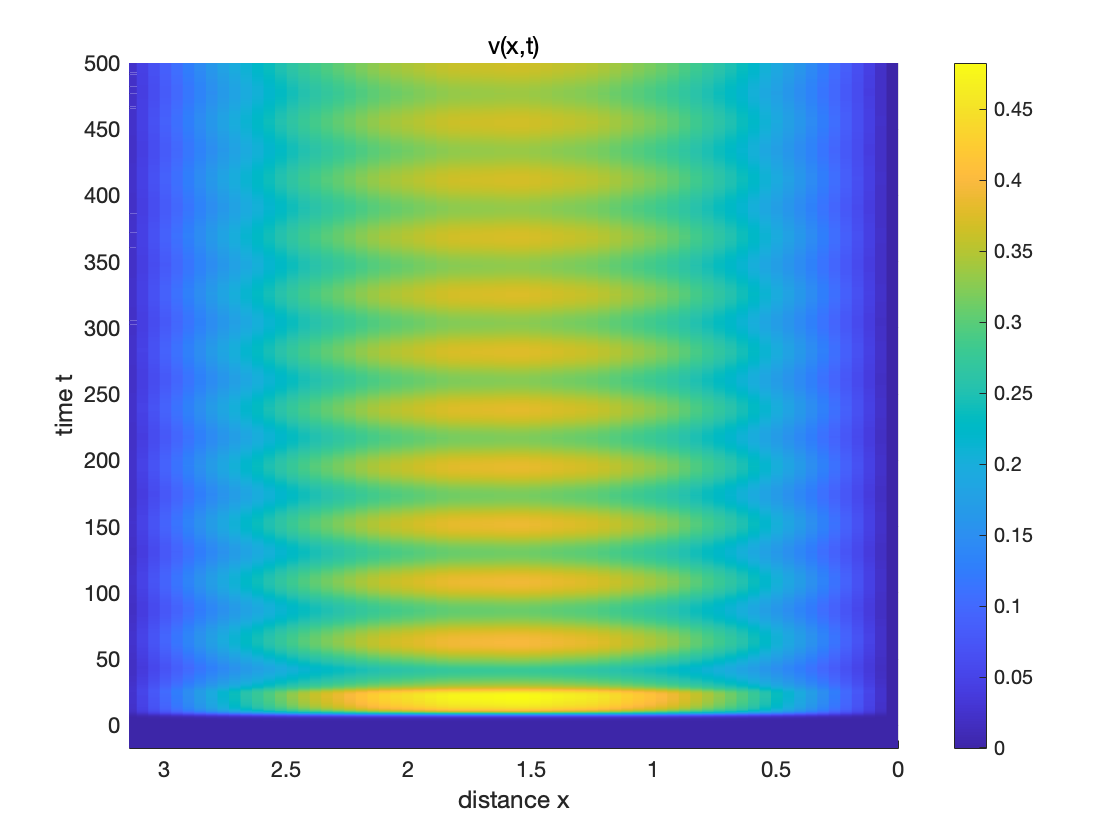}
  }
	\hfill
  \caption{Numerical simulations of \eqref{2.6} for $P_4\in D_3$ satisfying $\mathbf{(H_6)}$, (a) $\tau=3<\tilde \tau_0=3.3592$, the positive steady state $(u_s, v_s)$ is locally asymptotically stable; (b) $\tau=17>\tilde \tau_0=3.3592$, the positive steady state $(u_s, v_s)$ converges to a spatially non-homogeneous time-periodic solution.}
  \label{fig_P_4}
\end{figure}

\section{Conclusion}

In this paper, we consider a Lotka-Volterra competition model with memory effect and spatial heterogeneity, whose two competing species have their self-memory, respectively. Unlike most literature on competition models with memory, the boundary condition we consider is the Dirichlet boundary condition. It can be challenging because the positive steady state must be spatially non-constant, and it is usually more difficult to analyze the spatiotemporal dynamics of systems with Dirichlet boundary conditions. Moreover, considering the heterogeneity of landscapes, animal memory plays a crucial role in determining how resources are efficiently utilized in space. A comprehensive consideration of memory, heterogeneous environments, and hostile boundary conditions in the dynamics of competitive populations could provide potential insights for wildlife and ecological conservation.

We mainly focus on the stability of the coexistence steady state and the existence of Hopf bifurcation for the system \eqref{1}. Moreover, some interesting conclusions have been observed in our study. One is that when $(d_1,d_2)\in D_2$ and $d_1d_2<0$, the positive non-constant steady state $(u_s, v_s)$ is locally asymptotically stable for $\tau\ge 0$. This means that if the movement direction of one competitive species is towards the gradient of its density function in the past time, while another avoids the gradient of its density function in the same past time, then stable coexistence of the two competitive species can occur when their memory diffusion coefficients $d_1$ and $d_2$ are within a certain range. On the other hand, When coefficients $d_1$, $d_2>0$ and they are both small, the positive non-constant steady state $(u_s, v_s)$ is also locally asymptotically stable for $\tau\ge 0$, which means that when both of two competing species move towards the gradient of density function in the past time, then their coexistence is stable with two small memory diffusion coefficients $d_1$ and $d_2$. However, when coefficients $d_1$, $d_2>0$ are relatively large, and their ratio keeps in a certain range, the stability of the positive steady state $(u_s, v_s)$ could be lost. there exists a critical value $\tau_0$, such that $(u_s, v_s)$ is locally asymptotically stable when $\tau\in[0,\tau_0]$, while unstable when $\tau>\tau_0$, the positive steady state $(u_s, v_s)$ converges to a spatially non-homogeneous time-periodic solution, Hopf bifurcation occurs. 

Moreover, another intriguing conclusion is related to the impact of spatial heterogeneity functions on the system dynamics. The best example is in the case of weak competition, where spatial heterogeneity functions can significantly influence the stability and coexistence of two competing species. When $r_1(x)\neq r_2(x)$, there might be a Hopf bifurcation occurring for $\tau$ increasing and two memory-based diffusion coefficients $(d_1, d_2)$ keep in a region. It is different from the result shown in Wang et al. \cite{WW2023}, that is, when $r_1(x)= r_2(x)$, then the positive non-constant steady state is locally asymptotically stable for $\tau\ge 0$.

Overall, this study demonstrates the complex role of memory effects, spatial heterogeneity, and Dirichlet boundary conditions in competitive species dynamics, and reveals the potential for spatial periodicity pattern formation in the model. Future research can further explore the interaction between memory effects and spatial heterogeneity in different ecological environments, aiming to provide theoretical support for conservation strategies and ecosystem management.

\end{document}